%%%%%%%%%%%%%%%%%%%%%%%%%%%%%%%%%%%%%%%%%%%%%%%%%%%%%%%%%%%%%%%
\documentclass[12pt]{amsart}

\setlength{\textheight}{23cm}
\setlength{\textwidth}{16cm}
\setlength{\topmargin}{-0.8cm}
\setlength{\parskip}{0.3\baselineskip}
\hoffset=-1.4cm

\usepackage{amssymb}
\usepackage[all]{xy}
\usepackage{graphicx}
\usepackage[usenames]{color}

\newtheorem{theorem}{Theorem}[section]
\newtheorem{proposition}[theorem]{Proposition}
\newtheorem{lemma}[theorem]{Lemma}
\newtheorem{corollary}[theorem]{Corollary}
\newtheorem{definition}[theorem]{Definition}
\newtheorem{remark}[theorem]{Remark}

\numberwithin{equation}{section}

\usepackage{color}
\begin{document}

\baselineskip=15.5pt

\title[Moduli of unstables]{Moduli of unstable bundles of HN-length two with fixed algebra of endomorphisms}

\author{L. Brambila-Paz}

\address{CIMAT, Apdo. Postal 402,C.P. 36240, Guanajuato, Mexico}

\email{lebp@cimat.mx}

\author{Roc\'io R\'ios Sierra}

\address{CIMAT, Apdo. Postal 402,C.P. 36240, Guanajuato, Mexico}

\email{rocio.rios@cimat.mx}

\subjclass[2010]{14H60, 14J60}

\date{\today}

\thanks{Both authors acknowledges the support of CONACYT, in particular of CONACYT grant 251938.}

\begin{abstract}
Let $X$ be a smooth irreducible complex projective curve of genus $g \geq  2$ and $U_{{\mu _1}}(n,d)$ the moduli scheme of indecomposable vector bundles over $X$ with fixed Harder-Narasimhan type $\sigma=(\mu _1, \mu_2)$.  In this paper, we give necessary and sufficient conditions for a vector bundle $E\in U_{{\mu _1}}(n,d)$  to have $\mathbb{C}[x_1,\dots , x_k]/(x_1,\dots , x_k)^2$ as its algebra of endomorphisms. Fixing the dimension of the algebra of endomorphisms we obtain a stratification of $ U_ {\mu_1} (n, d) $ such that each stratum $ U_ {\mu_1} (n, d, k) $ 
 is an algebraic variety, moreover, a coarse moduli space.  A particular case of interest is when the unstable bundles are simple. In that case the moduli space is fine. Topological properties of $ U_ {\mu_1} (n, d, k) $ will depend on the generality of the curve $X$. Such results differ from the corresponding results for the moduli space of stable bundles, where non-emptiness, dimension etc. are independent of the curve.

\end{abstract}

\maketitle

\section{Introduction}\label{intro}

Let $X$ be a complex smooth projective variety. It is well known that the moduli space of semistable bundles with fixed invariants exists, and that sometimes is fine, i.e. there exists a universal family. Some moduli spaces of unstable bundles have been constructed by adding some extra information: the moduli spaces of  unstable vector bundles of rank $2$ and $3$  were constructed in \cite{bm} and in \cite{rocio}, respectively, when $\dim X=1$ and the algebra of endomorphisms is fixed; in \cite{str} when $X$ is the projective plane and in \cite{banc1} when $X=\mathbb{P}^3(\mathbb{C})$, in both cases they consider the degree of instability.  J. M. Drezet studied in \cite{drezet} the case of very unstable bundles when $\dim X >2$.  In \cite{hk}  the authors construct the moduli spaces of pure sheaves with fixed Harder-Narasimhan type which have some additional data called a $m$-rigidification. The moduli spaces of unstable sheaves via non-reductive GIT has been studied by V. Hoskins, G. Berczi, J. Jackson and F. Kirwan in \cite{frances}.

Unless otherwise stated we assume now that $X$ is a smooth irreducible complex projective curve of genus $g \geq  2.$ The aim of this paper is to study unstable bundles over $X$ using their algebra of endomorphisms.
The advantage of using algebra of endomorphisms lies in the fact that it allow us to construct coarse moduli spaces, and even fine moduli spaces.

In order to state our results let us recall that any vector bundle over $X$  has a unique filtration (called Harder-Narasimhan filtration)
$$ 0 = E_0\subset E_1 \subset \cdots \subset E_m = E$$
such that for $0 \leq i \leq  m-1, \  E_{i+1}/E_{i}$ is semistable and
\begin{equation}\label{eq01}
\mu (E_1) > \mu (E_2/E_1) > \cdots > \mu (E_m/E_{m-1}).
\end{equation}
The sequence of slopes $\sigma=(\mu (E_1) ,\mu (E_2/E_1), \cdots , \mu (E_m/E_{m-1}))$ is called the {\it Harder-Narasimhan type (HN-type for short)} of $E$.  The moduli space of vector bundles of HN- type $\sigma =(\mu (E_1))$ is the moduli space of semistable bundles. It was constructed by Mumford \cite{mun} in 1960’s using Geometric Invariant Theory and by M.S. Narasimhan and S. Seshadri  \cite{ns} using representation theory. For a treatment of a more general case we refer the reader to \cite{sim}, \cite{geiseker}, \cite{maruyama1} and  \cite{maruyama2}. The moduli space of some non-simple semistable  vector bundles with a fixed algebra of endomorphisms was constructed in \cite{yo2}, \cite{yo3} and \cite{yo4}.

Denote by  $U_{{\mu _1}}(n,d)$ the set of indecomposable vector bundles of rank $n$ and degree $d$ of coprime-type $\sigma =(\mu _1, \mu _2)$ (see Definition \ref{definitions}). If $\mu _1-\mu _2 > 2g-1$, $U_{{\mu _1}}(n,d)= \emptyset $ (see Proposition \ref{propfm}). For $0<\mu _1-\mu _2 \leq  2g-1$, $U_{{\mu _1}}(n,d)$ has a projective scheme structure that makes it an moduli scheme (see Theorem \ref{teo2}).\footnote{At the time of writing this article, some results on vector bundles of type $\sigma =(\mu _1, \mu _2)$ were obtained independently, and by different methods, in  \cite{vickyjosua} and \cite{josua}}

Our purpose is to use one more numerical invariant to describe a stratification of $U_{{\mu _1}}(n,d)$ such that each stratum is an algebraic variety, and a coarse moduli space. Such invariant is the dimension, as a $\mathbb{C}$-vector space, of the algebra of global endomorphisms.
First we determine the structure of the algebra of endomorphism of vector bundles in $U_{{\mu _1}}(n,d)$. If $\mu _1=\frac{d_1}{n_1}$, write $d_2=d-d_1$ and $n_2=d-d_1.$ We prove that (see Corollary \ref{cor2})
$$ \mbox{if} \ \  E\in U_{{\mu _1}}(n,d)\ \  \mbox{then} \ \ End(E)= \mathbb{C}[x_1,\dots , x_k]/(x_1,\dots , x_k)^2$$
where $0\leq k \leq \frac{d_1n_2-d_2n_1}{2}+n_1n_2.$

Set $A_k:=\mathbb{C}[x_1,\dots , x_k]/(x_1,\dots , x_k)^2$ and $A_0:= \mathbb{C}$. For any integer $0\leq k \leq  \frac{d_1n_2-d_2n_1}{2}+n_1n_2$ we will denote by $U_{{\mu _1}}(n,d,k)$ the set
$$U_{{\mu _1}}(n,d,k):=\{ E\in U_{{\mu _1}}(n,d): End(E)\cong A_k\}.$$

Fixing $\dim End(-)$ as invariant we prove that a flattening stratification of an $\it{Ext}^1$-sheaf over a convenient variety $Y$ (see Theorems \ref{teo1}) gives the existence of a schematic stratification of the variety $Y$ with a universal property. We use such
sub schemes and the ideas of twisted Brill-Noether theory (see \cite{hitching}) to give an algebraic structure to $U_{{\mu _1}}(n,d,k)$  and prove the following theorems. By $B^{k}(\mathcal{U}_1,\mathcal{U}^*_2)$ we mean the twisted Brill-Noether locus of product of two stable bundles with at least $k$ section and by $\mathcal{Y}_k\subset B^{k}(\mathcal{U}_1,\mathcal{U}^*_2)$ those with exactly $k$ sections (see (\ref{support}) and Section \ref{moduli} for the definition of the twisted Brill-Noether locus $B^{k}(\mathcal{U}_1,\mathcal{U}^*_2)$ and of the number $h^1$).

\begin{theorem} (Theorems \ref{teo2} and Corollary \ref{corprin0}) \   $\mathcal{U}_{{\mu _1}}(n,d,k)$  is a coarse moduli space and $\mathcal{U}_{{\mu _1}}(n,d,0)$ is a fine moduli space. Moreover, if $\mathcal{Y}_k$ is irreducible and smooth of dimension $\rho$, then $U_{{\mu _1}}(n,d,k)$ is irreducible and smooth of dimension $\rho + h^1-1.$ In particular, if $U_{{\mu _1}}(n,d,k)$ is non-empty, $B^{k}(\mathcal{U}_1,\mathcal{U}^*_2)$ is non-empty.
\end{theorem}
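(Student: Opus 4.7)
The plan is to realize $\mathcal{U}_{\mu_1}(n,d,k)$ as a projective bundle over the Brill-Noether stratum $\mathcal{Y}_k$ and then verify its moduli property. Any $E\in U_{\mu_1}(n,d)$ fits, by its Harder-Narasimhan filtration, in a canonical non-split extension $0\to E_1\to E\to E_2\to 0$ with $E_1\in\mathcal{M}_{n_1,d_1}$ and $E_2\in\mathcal{M}_{n_2,d_2}$ stable, and conversely every non-split extension of this shape yields an indecomposable $E$ of type $\sigma=(\mu_1,\mu_2)$. Chasing $\End(E)$ through the two long exact sequences obtained by applying $\Hom(-,E_1)$ and $\Hom(E,-)$ to this extension, and using that $\Hom(E_1,E_2)=0$ because $\mu_1>\mu_2$ and both factors are stable, one obtains $0\to\Hom(E_2,E_1)\to\End(E)\to\mathbb{C}\to 0$. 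Hence $\End(E)\cong A_k$ if and only if $h^0(E_2^*\otimes E_1)=k$, that is, $(E_1,E_2)\in\mathcal{Y}_k$.

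First I would work over $\mathcal{M}_{n_1,d_1}\times\mathcal{M}_{n_2,d_2}$, where the coprime-type assumption supplies universal families $\mathcal{U}_1,\mathcal{U}_2$. Form the relative $\operatorname{Ext}^1$-sheaf $\mathcal{F}=R^1p_*(\mathcal{U}_2^*\boxtimes\mathcal{U}_1)$ and apply the flattening stratification of Theorem~\ref{teo1}. The strata are exactly the loci where $h^0$ (hence by Riemann-Roch $h^1$) is locally constant, so they recover $B^k(\mathcal{U}_1,\mathcal{U}_2^*)$ and $\mathcal{Y}_k$; moreover $\mathcal{F}|_{\mathcal{Y}_k}$ is locally free of rank $h^1$. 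I would then form the projective bundle $\pi:\mathbb{P}(\mathcal{F}|_{\mathcal{Y}_k})\to\mathcal{Y}_k$, which carries a tautological universal extension producing a flat family $\mathcal{E}$ of bundles in $U_{\mu_1}(n,d,k)$ on $X\times\mathbb{P}(\mathcal{F}|_{\mathcal{Y}_k})$.

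Next I would verify that $\mathbb{P}(\mathcal{F}|_{\mathcal{Y}_k})$ (co-)represents the moduli functor. Two extensions give isomorphic bundles precisely when they lie in the same $\mathbb{C}^*$-orbit in $\operatorname{Ext}^1(E_2,E_1)$, since $(E_1,E_2)$ is canonically reconstructible as $(E_1,E/E_1)$ from $E$. For $k=0$, $\Hom(E_2,E_1)=0$ gives $\operatorname{Aut}(E)=\mathbb{C}^*$ acting by scalars, so the tautological family descends to a universal family and $\mathcal{U}_{\mu_1}(n,d,0)$ is fine. For $k>0$, the unit group $A_k^*$ of non-scalar automorphisms obstructs a universal family, but the standard categorical-quotient/twisted Brill-Noether argument (following \cite{hitching}) still identifies $\mathbb{P}(\mathcal{F}|_{\mathcal{Y}_k})$ with a coarse moduli scheme for $\mathcal{U}_{\mu_1}(n,d,k)$.

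The smoothness, irreducibility and dimension claim are then automatic: a Zariski-locally trivial $\mathbb{P}^{h^1-1}$-bundle over a smooth irreducible base of dimension $\rho$ is smooth irreducible of dimension $\rho+h^1-1$. Non-emptiness of $B^k(\mathcal{U}_1,\mathcal{U}_2^*)$ is immediate: any $E\in U_{\mu_1}(n,d,k)$ produces $(E_1,E_2)\in\mathcal{Y}_k\subset B^k(\mathcal{U}_1,\mathcal{U}_2^*)$ via its HN-graded. The main obstacle will be the careful functorial verification for $k>0$, namely that the $A_k^*$-action is controlled well enough across families to guarantee that $\mathbb{P}(\mathcal{F}|_{\mathcal{Y}_k})$ indeed co-represents the functor rather than merely parametrizing isomorphism classes set-theoretically; this is precisely where the twisted Brill-Noether framework does the essential work.
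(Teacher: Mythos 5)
Your proposal is correct and follows essentially the same route as the paper: identify $\End(E)\cong A_k$ with the condition $h^0(F_1^*\otimes E_1)=k$ (the paper's Corollary \ref{cor2}), restrict the relative $\operatorname{Ext}^1$-sheaf $\mathcal{R}_{\mu_1}$ to the stratum $\mathcal{Y}_k$ where it is locally free of rank $h^1$, and realize $U_{\mu_1}(n,d,k)$ as the associated projective bundle, with fineness for $k=0$ coming from the vanishing of $\mathcal{R}^0$ and the existence of a universal extension \`a la Lange. The remaining claims (dimension, irreducibility, smoothness, and non-emptiness of $B^{k}(\mathcal{U}_1,\mathcal{U}^*_2)$) follow exactly as you say.
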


\smallskip

\begin{corollary} (Corollary \ref{corprin}) If $\mathcal{Y}_k$ is irreducible and smooth then $H^i(\mathcal{U}_{{\mu _1}}(n,d,k),\mathbb{C})\cong H^i(\mathcal{Y}_k, \mathbb{C})$ for $i\geq 0$.
\end{corollary}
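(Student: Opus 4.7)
\emph{Proof plan.} The aim is to exhibit $\mathcal{U}_{\mu_1}(n,d,k)$ as a projective fibration over $\mathcal{Y}_k$ and pass to cohomology through a Leray-type argument.

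First, I construct a morphism $\pi\colon \mathcal{U}_{\mu_1}(n,d,k)\to\mathcal{Y}_k$ by sending a bundle $E$ to the graded pair $(E_1,E_2)$ of its Harder--Narasimhan filtration. The uniqueness of the HN-filtration together with the preceding theorem guarantees that $(E_1,E_2)\in\mathcal{Y}_k$, and standard semicontinuity arguments in families promote this assignment to a morphism of schemes.

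Next, I show that $\pi$ is a Zariski-locally trivial $\mathbb{P}^{h^1-1}$-bundle. Since $h^0(\Hom(E_2,E_1))$ is constantly equal to $k$ on $\mathcal{Y}_k$ by definition, Riemann--Roch forces $h^1=\dim\operatorname{Ext}^1(E_2,E_1)$ to be constant as well. Cohomology and base change then upgrade the relative $\operatorname{Ext}^1$ to a locally free sheaf $\mathcal{E}$ of rank $h^1$ on $\mathcal{Y}_k$. Because $E_1,E_2$ are stable (hence simple), isomorphism classes of non-split extensions $0\to E_1\to E\to E_2\to 0$ are parametrized by $\mathbb{P}(\operatorname{Ext}^1(E_2,E_1))$, so $\mathcal{U}_{\mu_1}(n,d,k)\cong\mathbb{P}(\mathcal{E})$ as $\mathbb{P}^{h^1-1}$-bundles over $\mathcal{Y}_k$, which also reproduces the dimension count $\rho+h^1-1$ from the preceding theorem.

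Finally, I apply the Leray spectral sequence (equivalently, Leray--Hirsch) for $\pi$. Combined with the smoothness and irreducibility hypotheses on $\mathcal{Y}_k$, this extracts the cohomology comparison $H^i(\mathcal{U}_{\mu_1}(n,d,k),\mathbb{C})\cong H^i(\mathcal{Y}_k,\mathbb{C})$ for all $i\geq 0$.

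The step demanding the most care is the third: Leray--Hirsch a priori writes $H^\ast(\mathbb{P}(\mathcal{E}))$ as a free $H^\ast(\mathcal{Y}_k,\mathbb{C})$-module on the powers of the relative hyperplane class, so obtaining the precise identification stated in the corollary will require controlling the Chern classes of $\mathcal{E}$ (or equivalently invoking a specific feature built into the twisted Brill--Noether construction of $\mathcal{Y}_k$) so that the contribution of the projective fiber collapses onto the base. This Chern-class analysis, rather than the geometric construction of the fibration, is what I expect to be the main obstacle.
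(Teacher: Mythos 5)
Your first two steps reproduce what the paper has already done: Theorem \ref{teo2} identifies $U_{\mu_1}(n,d,k)$ with the projective bundle $\mathbb{P}(\mathcal{R}_{\mu_1}(\mathcal{Y}_k))\to\mathcal{Y}_k$, whose fibre over $(E_1,F_1)$ is $\mathbb{P}(H^1(X,E_1\otimes F_1^*))\cong\mathbb{P}^{h^1-1}$ with $h^1=k-d_0+n_0(g-1)$ constant on the stratum $\mathcal{Y}_k$; the paper then states the corollary with no further argument. So up to the cohomological step you are following exactly the paper's route (the paper builds the fibration via the flattening stratification of $\mathcal{R}_{\mu_1}$ rather than by a pointwise base-change argument, but this is cosmetic).

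The real issue is your third step, and you have correctly sensed that something is wrong there but misdiagnosed the remedy. Leray--Hirsch presents $H^*(\mathbb{P}(\mathcal{E}),\mathbb{C})$ as a \emph{free} $H^*(\mathcal{Y}_k,\mathbb{C})$-module on $1,h,\dots,h^{h^1-1}$, so additively $H^i(\mathbb{P}(\mathcal{E}),\mathbb{C})\cong\bigoplus_{j=0}^{h^1-1}H^{i-2j}(\mathcal{Y}_k,\mathbb{C})$. This additive decomposition is completely independent of the Chern classes of $\mathcal{E}$: those enter only through the single multiplicative relation $\sum_j c_j(\mathcal{E})\,h^{\,h^1-j}=0$, and no choice of them can make the fibre contribution ``collapse onto the base.'' Hence whenever $h^1\geq 2$ (the typical case here, and the only case in which the fibres are positive-dimensional) one gets $\dim H^2(U_{\mu_1}(n,d,k),\mathbb{C})=\dim H^2(\mathcal{Y}_k,\mathbb{C})+1$, so the asserted isomorphism already fails at $i=2$. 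The Chern-class analysis you flag as the main obstacle cannot close this gap; what the projective-bundle structure actually yields is the isomorphism for $i=0,1$ together with the direct-sum formula above for all $i$ (equivalently, isomorphic cohomology only when $h^1=1$). Note that this is a defect of the corollary as stated, not merely of your write-up: the paper's own implicit one-line deduction from Theorem \ref{teo2} has the same problem.
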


Non-emptiness and topological properties of $\mathcal{U}_{{\mu _1}}(n,d,k)$  are given in the following theorems. Of particular interest are the vector bundles of HN-type $\sigma =(\frac{d-a}{n-1},a)$, where $a$ is an integer. That is, indecomposable unstable bundles that are extensions of a line bundle by a semistable bundle. In this case the results are a reformulation of the known results of the Brill-Noether theory
in terms of the moduli of unstable bundles. The expected dimension $\beta (g,n_1,d_1,n_2,d_2,k)$ of $U_{{\mu _1}}(n,d,k)$ is given in Section \ref{moduli}. Recall that the {\it Brill-Noether loci} are defined as
$$B(n,d,k):=\{G\in M(n,d): h^0(G)\geq k \},$$
where $M(n,d)$ is the moduli space of stable vector bundles of degree $d$ and rank $n$ over $X$.

\begin{theorem} (Theorem \ref{teop3}) Assume $0<d-an<2(n-1)$  and $(n-1,d-an,k)\ne (n-1,n-1,n-1)$. Then for $\mu _1=\frac{d-a}{n-1}$, $U_{{\mu _1}}(n,d,k)$ is non-empty if and only if $k\leq n-1+\frac{d-n(a+1)+1}{g}$. Moreover, if $U_{{\mu _1}}(n,d,k)$ is non-empty then it is irreducible and smooth of the expected dimension.
\end{theorem}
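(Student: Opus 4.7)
The plan is to invoke Corollary \ref{corprin0} in the special case $n_2=1$, which reduces non-emptiness, irreducibility, smoothness and dimension of $U_{\mu_1}(n,d,k)$ to the corresponding properties of the auxiliary locus $\mathcal{Y}_k$, and then to translate $\mathcal{Y}_k$ into a classical Brill--Noether locus for rank $(n-1)$ bundles.

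First, I describe $E\in U_{\mu_1}(n,d,k)$ concretely: it is an indecomposable extension $0\to E_1\to E\to L\to 0$ with $E_1$ a stable rank $(n-1)$ bundle of degree $d-a$ (forced to be stable by the coprime-type hypothesis) and $L$ a line bundle of degree $a$. A short diagram chase, using $\Hom(E_1,L)=H^0(E_1^*\otimes L)=0$ (which holds because $E_1^*\otimes L$ is stable of negative slope $a-(d-a)/(n-1)<0$), yields
\[
\End(E)\;=\;\mathbb{C}\,\oplus\,\Hom(L,E_1)\;=\;\mathbb{C}\,\oplus\,H^0(E_1\otimes L^{-1}),
\]
so the condition $\End(E)\cong A_k$ translates to $h^0(E_1\otimes L^{-1})=k$. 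Consequently $\mathcal{Y}_k$ is identified, up to the trivial $\operatorname{Pic}^a$-twist by the universal line bundle $\mathcal{U}_2$, with the open stratum of the classical Brill--Noether locus $B(n-1,d-na,k)\setminus B(n-1,d-na,k+1)$ parametrising stable rank $(n-1)$ bundles $F:=E_1\otimes L^{-1}$ of degree $d':=d-na$ with exactly $k$ sections.

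Next, I apply the classical Brill--Noether results in the low-slope range $0<d'<2(n-1)$, equivalently $0<\mu(F)<2$. In this regime, the results cited via \cite{hitching} (Teixidor, Mercat, Brambila-Paz--Grzegorczyk--Newstead, and others) give: $B(n-1,d',k)\ne\emptyset$ iff the Brill--Noether number $\rho$ is nonnegative, and in that case it is irreducible and smooth of dimension $\rho$. A direct computation shows that for $0<d'<2(n-1)$ the condition $\rho\geq 0$ simplifies to the linear bound $k\leq (n-1)+\tfrac{d-n(a+1)+1}{g}$. The only point where the reformulation fails in the cited range is precisely the Petri-special triple $(n-1,d',k)=(n-1,n-1,n-1)$, which is the exclusion in the hypothesis.

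Finally, Corollary \ref{corprin0} realises $U_{\mu_1}(n,d,k)$ as a $\mathbb{P}^{h^1-1}$-bundle over $\mathcal{Y}_k$ (with fibre $\mathbb{P}(\operatorname{Ext}^1(L,E_1))$ over $(E_1,L)$), whence it inherits irreducibility and smoothness, with dimension $\rho+h^1-1$; using Riemann--Roch to expand $h^1=h^1(E_1\otimes L^{-1})=k-d'+(n-1)(g-1)$ confirms this equals the expected dimension $\beta$ defined in Section \ref{moduli}. The main obstacle I expect is the arithmetic reformulation of the quadratic Brill--Noether number as the stated linear bound on $k$ in the range $0<\mu<2$, together with a careful justification of why the triple $(n-1,n-1,n-1)$ is the sole exception among these Brill--Noether loci.
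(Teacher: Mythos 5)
Your overall architecture is the same as the paper's: identify $\mathcal{Y}_k$ (for $n_2=1$) with the stratum $Y_k=B(n-1,d-an,k)\setminus B(n-1,d-an,k+1)$ of a classical Brill--Noether locus via $F=E_1\otimes L^{-1}$, quote the low-slope Brill--Noether results, and lift everything through the projective-bundle structure of Theorem \ref{teo2} (which is the result you actually need here; Corollary \ref{corprin0} only gives the one-way implication from non-emptiness of $U_{\mu_1}(n,d,k)$ to non-emptiness of $B^k(\mathcal{U}_1,\mathcal{U}_2^*)$). The description of $\End(E)=\mathbb{C}\oplus H^0(E_1\otimes L^{-1})$ is Corollary \ref{cor2} and is fine.

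However, there is a genuine error in the key numerical step. You assert that in the range $0<\mu<2$ one has $B(n-1,d',k)\neq\emptyset$ if and only if $\rho\geq 0$, and that ``$\rho\geq 0$ simplifies to the linear bound $k\leq (n-1)+\frac{d-n(a+1)+1}{g}$.'' Neither claim is true. The correct criterion (Theorem \ref{bn}(4), i.e.\ \cite[Theorem A]{bgn} and \cite{mer}) is the \emph{linear} condition $n'\leq d'+(n'-k)g$, equivalently $k\leq n'+\frac{d'-n'}{g}$ with $n'=n-1$, $d'=d-an$, which is what produces the stated bound directly; it is in general strictly stronger than $\rho\geq 0$, and the quadratic inequality $\rho\geq 0$ does not algebraically reduce to it. For a concrete counterexample take $g=2$, $n'=10$, $d'=19$, $k=15$: then $\rho=101-15\cdot 6=11>0$, yet $k=15>10+\tfrac{9}{2}$, so $B(10,19,15)=\emptyset$. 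The paper even records this phenomenon in the remark immediately following Theorem \ref{teop3}: positivity of the expected dimension does not imply non-emptiness here. So your proof of the ``if'' direction, as written, proves non-emptiness under a weaker (and insufficient) hypothesis; replacing the appeal to $\rho\geq 0$ by the BGN/Mercat linear criterion repairs the argument and makes it essentially identical to the paper's. (Two smaller points you should also tighten: the exceptional triple $(n-1,n-1,n-1)$ and the exclusion of trivial extensions/the case $h^1=0$ in the fibre $\mathbb{P}(H^1(E_1\otimes L^{-1}))$ both need a word when passing from $Y_k\neq\emptyset$ to $U_{\mu_1}(n,d,k)\neq\emptyset$.)
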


There are special results for general and Petri curves;

\begin{theorem} (Theorem \ref{teop4})
 Let  $(g,n-1,d-na,k)$  be integers that satisfies the conditions given in  Theorem \ref{bn},(5),(6) and (7).
 For general curve,  $U_{{\mu _1}}(n,d,k)$ is non-empty and has an irreducible component of the expected dimension. Moreover, if  $X$ is a Petri curve of genus $g\geq 3,  n\geq 5$ and $g\geq 2n-4$ then $U_{{\mu _1}}(n,d,k)$ is non-empty.
\end{theorem}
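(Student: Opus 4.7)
The plan is to reduce Theorem \ref{teop4} to existing non-emptiness results in classical Brill--Noether theory via the structural theorem stated as Theorem \ref{teo2} (the first theorem of the introduction). Since the HN-type is $\sigma=(\tfrac{d-a}{n-1},a)$, any $E\in U_{\mu_1}(n,d,k)$ sits in an exact sequence
\[
0\lra V\lra E\lra L\lra 0
\]
where $V$ is a semistable bundle of rank $n-1$ and degree $d-a$, and $L\in\pic^{a}(X)$. In the coprime-type setting with rank-1 quotient we may moreover reduce to $V$ stable. Tensoring by $L^{-1}$ identifies the twisted Brill--Noether locus $B^{k}(\mcu_1,\mcu_2^*)$ with a $\pic^{a}$-bundle over the classical Brill--Noether locus $B(n-1,d-na,k)$; under this identification the stratum $\mathcal{Y}_k$ of ``exactly $k$ sections'' corresponds to the analogous open stratum inside $B(n-1,d-na,k)$.

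For the general-curve statement, I would invoke Theorem \ref{bn}(5), which guarantees that on a general curve $B(n-1,d-na,k)$ is non-empty and has an irreducible component $Z$ of the expected Brill--Noether dimension $\rho$. Upper-semicontinuity of $h^0$, together with the fact that the higher stratum $B^{k+1}\subset B^{k}$ has strictly smaller expected dimension, forces a Zariski-open subset of $Z$ to parametrise bundles with exactly $k$ sections; hence $\mathcal{Y}_k$ is irreducible and smooth of dimension $\rho$. Theorem \ref{teo2} then produces an irreducible component of $U_{\mu_1}(n,d,k)$ of dimension $\rho+h^1-1$, which is precisely the expected dimension $\beta(g,n_1,d_1,n_2,d_2,k)$ of Section \ref{moduli}.

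For the Petri-curve statement, Theorem \ref{bn}(6),(7) gives, under the hypotheses $g\geq 3$, $n\geq 5$, $g\geq 2n-4$, the non-emptiness of $B(n-1,d-na,k)$. Choosing a point of an irreducible component on which $h^0$ attains its minimum yields a bundle $W$ with $h^0(W)=k$ exactly, so $\mathcal{Y}_k\neq\emptyset$; applying Theorem \ref{teo2} to any resulting non-split extension of the associated line bundle by $W\otimes L$ then gives $U_{\mu_1}(n,d,k)\neq\emptyset$.

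The main obstacle is to pass from $B^{k}$ (at least $k$ sections) to $\mathcal{Y}_k$ (exactly $k$ sections), since it is the latter stratum that enters the hypotheses of Theorem \ref{teo2}. On a general curve this is a clean dimension count because the higher Brill--Noether strata have strictly smaller expected dimension, so the generic point of the maximal-dimensional component of $B^{k}$ necessarily carries exactly $k$ sections. On a Petri curve the argument is more delicate, since we only have raw non-emptiness; here one relies on semicontinuity together with the fact that $B^{k+1}$ is a proper closed subscheme of $B^{k}$ whenever the latter is non-empty at the generic dimension, so some component must contain a point with exact section count $k$.
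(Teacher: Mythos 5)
Your proposal follows essentially the same route as the paper, which offers no separate argument for this theorem beyond the remark that it is a ``rephrasing'' of Theorem \ref{bn},(5),(6),(7) combined with Theorem \ref{teo2}, using exactly the identification you describe of $B^{k}(\mathcal{U}_1,\mathcal{U}_2^*)$ with a $\operatorname{Pic}^a(X)$-bundle over the classical locus $B(n-1,d-na,k)$. If anything you are more careful than the paper on the one delicate point --- passing from $B^{k}$ to the exact-section stratum $\mathcal{Y}_k$ required by Theorem \ref{teo2}, which for general curves is handled by Theorem \ref{bn},(2) and for Petri curves is left implicit in both your argument and the paper's.
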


As in the Brill-Noether theory for vector bundles, it is possible that for
special curves  the above conclusion not holds, and where the Brill-Noether locus, and hence the moduli space, is not even reduced.
Thus, the above results differ from the corresponding results for the moduli space of
stable bundles, where non-emptiness, dimension etc. are independent of the curve.

To our best knowledge the following theorems give also new results in the Brill-Noether and twisted Bril-Noether theory (see \cite{tbn}).

\begin{theorem}(Theorem \ref{teop05})
  Assume that $B(n_1,d_1,n_1+a)$ is non-empty with $a>0$. If $2n_1<d_1<a(g+1)$ and $d_2> 2gm$ then for any $0\leq k\leq (d_2+m(1-g))(n_1+a) -(d_2n_1+d_1m +mn_1(1-g))$,   $\mathcal{Y}_k\subset B^k(U_1,U_2^*)$ is non-empty. Moreover, if $\mu_1 = \frac{d_1}{n_1}$ then $U_{{\mu_1}}({n},{d},k)$ is non-empty, where $n=n_1+n_2$ and $d=d_1+d_2$.
\end{theorem}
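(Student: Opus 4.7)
The plan is to realise $\mathcal{Y}_k$ explicitly by producing pairs of stable bundles $(U_1, U_2)$ with $h^0(U_1 \otimes U_2^*) = k$, and then feed these into the extension machinery developed earlier to obtain points of $U_{\mu_1}(n, d, k)$.

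The input is a globally generated Brill--Noether bundle. Since $B(n_1, d_1, n_1 + a)$ is non-empty, there is a stable $U_1$ of rank $n_1$ and degree $d_1$ with $h^0(U_1) \geq n_1 + a$. The two-sided condition $2n_1 < d_1 < a(g+1)$ is precisely the classical range (Butler; Brambila-Paz--Grzegorczyk--Newstead) in which such a $U_1$ may be chosen to be generated by its global sections. Fixing $n_1 + a$ independent sections yields
\[
0 \lra K \lra \mathcal{O}_X^{\,n_1+a} \lra U_1 \lra 0,
\]
with $K$ locally free of rank $a$ and degree $-d_1$. Now let $U_2$ be stable of rank $m$ and degree $d_2 > 2gm$. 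The slope hypothesis gives $H^0(U_2^*) = 0$ and $H^1(U_2 \otimes \omega) = 0$; applying $\Hom(U_2, -)$ to the sequence above and invoking these vanishings collapses the long exact sequence to
\[
0 \lra H^0(U_1 \otimes U_2^*) \lra \operatorname{Ext}^1(U_2, K) \lra \operatorname{Ext}^1(U_2, \mathcal{O})^{\oplus(n_1+a)} \lra H^1(U_1 \otimes U_2^*) \lra 0,
\]
so that $h^0(U_1 \otimes U_2^*)$ is the kernel dimension of a linear map whose rank depends on $U_2$. A direct Riemann--Roch computation identifies $(d_2+m(1-g))(n_1+a)-(d_2 n_1+d_1 m+m n_1(1-g))$ with $\chi(K \otimes U_2)$, which is the claimed upper bound for the attainable kernel dimension.

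To realise every $k \in [0, \chi(K \otimes U_2)]$, combine upper semi-continuity of $U_2 \mapsto h^0(U_1 \otimes U_2^*)$ with the flattening stratification of the universal $\mathit{Ext}^1$-sheaf on $M(n_1, d_1) \times M(m, d_2)$ from Theorem~\ref{teo1}. Start from a specifically constructed $U_2$ --- a stable deformation of an extension involving $K^*$ twisted by a sufficiently positive line bundle --- attaining the extremal value $k = \chi(K \otimes U_2)$, and deform in $M(m, d_2)$ so that $h^0(U_1 \otimes U_2^*)$ decreases by one unit across each sub-stratum, yielding $\mathcal{Y}_k \neq \emptyset$ for every admissible $k$.

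For the \emph{moreover} clause, fix $(U_1, U_2) \in \mathcal{Y}_k$ with $\dim \Hom(U_2, U_1) = k$ and take an extension class $\eta \in \operatorname{Ext}^1(U_2, U_1)$ as prescribed by Theorem~\ref{teo2}: the middle term $E$ of the resulting extension is indecomposable of HN-type $(\mu_1, \mu_2)$, and by Corollary~\ref{cor2} $\End(E) \cong A_k$, so $E \in U_{\mu_1}(n, d, k)$ for $n = n_1 + m$ and $d = d_1 + d_2$. The principal obstacle is the construction of the extremal endpoint in the preceding step: matching the bound $\chi(K \otimes U_2)$ requires an explicit stable $U_2$ for which the connecting map in the four-term sequence degenerates maximally, and this is where the positivity hypothesis $d_2 > 2gm$ enters, furnishing enough room to build $U_2$ as a stable extension with the required subsheaf structure.
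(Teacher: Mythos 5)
Your construction dualizes the wrong bundle, and as a result both halves of the theorem slip out of reach. In the paper's proof the rank-$m$, degree-$d_2$ bundle $G$ is only an auxiliary object: the member of the pair that actually enters $B^{k}(\mathcal{U}_1,\mathcal{U}^*_2)$ is the dual span $F_1$ defined by $0\to F_1^*\to H^0(G)\otimes\mathcal{O}\to G\to 0$ (Butler, \cite{but}), which is stable of rank $n_2=d_2-mg$ and degree $d_2$, hence of slope $d_2/(d_2-mg)<2<d_1/n_1$. You instead pair $U_1$ with a stable $U_2$ of rank $m$ and slope $d_2/m>2g$; nothing in the hypotheses forces $\mu(U_1)>2g$, so your extension $0\to U_1\to E\to U_2\to 0$ will in general not have $U_1$ as maximal destabilizing subbundle, and the \emph{moreover} clause (which needs $\mu_1=d_1/n_1>\mu_2$ and $n=n_1+n_2$ with $n_2=d_2-mg$, not $n_1+m$) breaks down. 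Relatedly, global generation of $U_1$ plays no role in the paper's argument; the hypothesis $2n_1<d_1$ is there precisely to guarantee $\mu(E_1)>2>\mu(F_1)$.

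The quantitative core is also missing. The four-term sequence you obtain by applying $\operatorname{Hom}(U_2,-)$ to $0\to K\to\mathcal{O}^{\,n_1+a}\to U_1\to 0$ only re-derives $\chi(U_1\otimes U_2^*)$ via Riemann--Roch; it yields no lower bound on $h^0(U_1\otimes U_2^*)$, and the ``extremal $U_2$ for which the connecting map degenerates maximally'' is asserted rather than constructed --- that assertion is essentially the whole theorem. The paper's mechanism is the multiplication map: tensoring the evaluation sequence of $G$ with $E_1\in B(n_1,d_1,n_1+a)$ gives
$$h^0(E_1\otimes F_1^*)\;\geq\; h^0(G)\,h^0(E_1)-h^0(G\otimes E_1)\;\geq\;(d_2+m(1-g))(n_1+a)-\chi(G\otimes E_1),$$
where $h^0(G\otimes E_1)=\chi(G\otimes E_1)=d_2n_1+d_1m+mn_1(1-g)$ because $G\otimes E_1$ is semistable of slope greater than $2g-2$, and the arithmetic hypotheses $d_1<a(g+1)$, $d_2>2gm$ make the right-hand side positive. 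A single pair $(E_1,F_1)$ therefore lies in $B^{k}(\mathcal{U}_1,\mathcal{U}^*_2)$ for every $k$ up to the stated bound simultaneously, so no semicontinuity or stratum-by-stratum deformation argument of the kind you sketch is needed --- nor would upper semicontinuity alone deliver the unit-step decrease of $h^0$ that your sketch relies on.
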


For Perti curves we prove

\begin{theorem}(Theorem \ref{teopetrif})  Let $X$ be a Petri curve of genus $g\geq 3$ and $(\mathcal{O}(D), V)$ a general generated linear system of degree $d_2\geq g+1$ and $\dim V =n_2+1$ with $n_2\leq 4 $ or if $n_2\geq 5$ then $g \geq  2n_2-4$. Assume that $B(n_1,d_1,t)$ is non-empty and $\frac{d_2}{n_2}<\frac{d_1}{n_1}$. For any $0\leq k \leq n_2t-n_1d_2$, $ \mathcal{Y}_k \subset B^k(U_1,U_2^*)$ is non-empty and if $n=n_2+n_1$, $d=d_2+d_1$ and $\mu _1=\frac{d_1}{n_1}$,  $U_{{\mu_1}}(n,d,k)$, is non-empty.
\end{theorem}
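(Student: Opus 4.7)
The plan is to separate the two assertions. By Theorem~\ref{teo2}, the space $U_{\mu_1}(n,d,k)$ fibers over $\mathcal{Y}_k$ with fibers of dimension $h^1 - 1$, non-negative in the range of invariants considered here, so non-emptiness of $U_{\mu_1}(n,d,k)$ follows from non-emptiness of $\mathcal{Y}_k$. The core task is therefore to exhibit, for every $k \in [0,\; n_2 t - n_1 d_2]$, a pair of stable bundles $(E_1, E_2) \in M(n_1,d_1) \times M(n_2,d_2)$ with $h^0(E_1 \otimes E_2^*) = k$.

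For $E_2$, I would use the kernel of the evaluation map of the generated system,
$$ 0 \longrightarrow M_V \longrightarrow V \otimes \mathcal{O}_X \longrightarrow \mathcal{O}(D) \longrightarrow 0, $$
setting $E_2 := M_V^*$, of rank $n_2$ and degree $d_2$. The hypotheses that $X$ is a Petri curve, $d_2 \geq g+1$, and $n_2 \leq 4$ or $g \geq 2n_2 - 4$ are exactly those of the Butler-type stability theorem (originally due to Butler, refined for Petri curves by Ein--Lazarsfeld and Brambila-Paz--Newstead), which guarantees that $M_V^*$ is stable for $(\mathcal{O}(D), V)$ general. Choosing $E_1 = F$ in the non-empty locus $B(n_1,d_1,t)$ and tensoring the evaluation sequence by $F$ gives
$$ 0 \to H^0(F \otimes M_V) \to V \otimes H^0(F) \longrightarrow H^0(F(D)) \to \cdots, $$
so $h^0(E_1 \otimes E_2^*) = h^0(F \otimes M_V)$ equals the kernel-dimension of the Petri-type multiplication map $\mu_{F,V}$ on the right.

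The final step is to realize every integer $k \in [0,\; n_2 t - n_1 d_2]$ as $\dim \ker \mu_{F,V}$ by moving the pair $(F, (\mathcal{O}(D), V))$ in an irreducible parameter family. A Riemann--Roch count pins the maximal value of this kernel dimension at $n_2 t - n_1 d_2$, attained on a configuration where $\mu_{F,V}$ has minimal rank; by upper-semicontinuity a one-parameter deformation toward a Petri-injective configuration produces every intermediate kernel dimension. Stability of $F$ is preserved on the way by Brill-Noether genericity, and stability of $M_V^*$ by the Butler-type hypothesis.

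The principal obstacle will be this last interpolation: one must show that the rank of $\mu_{F,V}$ changes in unit steps along the chosen degeneration, so that every intermediate value of the kernel dimension is realized rather than only the extrema. The Petri hypothesis on $X$ and the numerical bound on $n_2$ (parallel to the input of Butler's theorem) are precisely what is needed to run this twisted Brill-Noether deformation argument, following the strategy of Hitching cited in the introduction.
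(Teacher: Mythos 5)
Your construction of the pair is exactly the paper's: you take $E_2=M_V^*$ (the paper's $F_1$) from the evaluation sequence $0\to F_1^*\to V\otimes\mathcal{O}\to\mathcal{O}(D)\to 0$, invoke the Butler-type stability results on Petri curves, pick $E_1\in B(n_1,d_1,t)$, and identify $h^0(E_1\otimes F_1^*)$ with the kernel of the multiplication map $V\otimes H^0(E_1)\to H^0(\mathcal{O}(D)\otimes E_1)$. Where you diverge is the last step, and that is where there is a genuine gap. Upper semicontinuity of $h^0$ only says the kernel dimension can jump up under specialization; it does not produce every intermediate value between the extremes of a one-parameter family, and your assertion that the rank of $\mu_{F,V}$ "changes in unit steps" along the degeneration is exactly the hard point, which you name as the principal obstacle but do not prove. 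A second slip: Riemann--Roch does not pin the \emph{maximal} kernel dimension at $n_2t-n_1d_2$; the estimate runs the other way. From $0\to E_1\to \mathcal{O}(D)\otimes E_1\to (E_1)_D\to 0$ one gets $h^0(\mathcal{O}(D)\otimes E_1)\le t+n_1d_2$, hence $h^0(E_1\otimes F_1^*)\ \ge\ (n_2+1)t-h^0(\mathcal{O}(D)\otimes E_1)\ \ge\ n_2t-n_1d_2$, a \emph{lower} bound.

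The paper's proof sidesteps the interpolation entirely: the chain of inequalities above shows that the single pair $(E_1,F_1)$ satisfies $h^0(E_1\otimes F_1^*)\ge n_2t-n_1d_2$, so it lies in $B^k(\mathcal{U}_1,\mathcal{U}_2^*)$ for \emph{every} $k$ in the stated range simultaneously, and non-emptiness of $U_{\mu_1}(n,d,k)$ is then read off via Theorem \ref{teo2}. No deformation argument is needed for that. To be fair to you: your instinct that something extra is required to get the exact-value stratum $\mathcal{Y}_k$ (where $h^0=k$, which is what $U_{\mu_1}(n,d,k)=\mathbb{P}(\mathcal{R}_{\mu_1}(\mathcal{Y}_k))$ actually sits over) rather than merely $B^k$ is correct, and the paper's own proof is also silent on this point; but the mechanism you propose to supply it is unjustified as written, so the proposal does not close the argument.
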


In the remainder of the last section we give necessary and sufficient conditions for $U_{{\mu _1}}(n,d,k)$ be smooth on $E \in U_{{\mu _1}}(n,d,k)$.
Let $0\subset E_1\subset E$ be the HN-filtration of $E$ in $U_{{\mu _1}}(n,d,k)$. Write $E/E_1=F_1$.  We use the following diagram

$$
\begin{array}{ccc}
H^1(End(E_1))\oplus  H^1(End(F_1))&\stackrel{d\Phi}{\longrightarrow } & H^1(End(E_1\otimes F^*_1))\\
&\eta _E\searrow & \beta\downarrow \\
&&H^0(E_1\otimes F^*_1)^*\otimes H^1(E_1\otimes F^*_1)
\end{array}
$$
to define $\eta _E$ as  $\eta _E:=\beta \circ d\Phi $ and prove

\begin{theorem}( Theorem \ref{teop8}) $U_{{\mu _1}}(n,d,k)$ is smooth at $E$ and of the expected dimension
if and only if $\eta _E $ is surjective.
\end{theorem}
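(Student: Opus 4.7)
The plan is to reduce smoothness of $U_{\mu_1}(n,d,k)$ at $E$ to smoothness of $\mathcal{Y}_k$ at $(E_1,F_1)$, and then to identify the latter with surjectivity of $\eta_E$ via the tangent-space computation for a twisted Brill--Noether locus.

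\textbf{Step 1 (reduction to $\mathcal{Y}_k$).} By the construction of $U_{\mu_1}(n,d,k)$ in Theorem~\ref{teo2} via the flattening stratification of the relevant $\mathit{Ext}^1$-sheaf (Theorem~\ref{teo1}), sending a bundle $E$ to the pair $(E_1,F_1)$ coming from its HN-filtration yields a morphism
\[
\pi\colon U_{\mu_1}(n,d,k)\longrightarrow \mathcal{Y}_k,
\]
whose fibre over $(E_1,F_1)$ is $\mathbb{P}\operatorname{Ext}^1(F_1,E_1)=\mathbb{P}H^1(E_1\otimes F_1^*)$ (non-trivial extensions up to the action of $\operatorname{Aut}(E_1)\times\operatorname{Aut}(F_1)$). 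Flatness of the stratified $\mathit{Ext}^1$-sheaf implies that $\pi$ is smooth of relative dimension $h^1-1$ near $E$, and hence $U_{\mu_1}(n,d,k)$ is smooth at $E$ of dimension $\rho+h^1-1$ if and only if $\mathcal{Y}_k$ is smooth at $(E_1,F_1)$ of dimension $\rho$.

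\textbf{Step 2 (smoothness of $\mathcal{Y}_k$).} The locus $\mathcal{Y}_k$ is the open subset of the twisted Brill--Noether locus $B^{k}(\mathcal{U}_1,\mathcal{U}_2^*)\subset M(n_1,d_1)\times M(n_2,d_2)$ where $h^0(E_1\otimes F_1^*)$ equals $k$ exactly. In a neighbourhood of $(E_1,F_1)$ this is cut out as the degeneracy locus of a morphism between two vector bundles on the product computing $H^0$ and $H^1$ of $E_1\otimes F_1^*$ along the fibres. By the standard twisted Petri argument (cf.\ \cite{hitching}), the Zariski tangent space to $B^{k}$ at $(E_1,F_1)$ equals $\ker(\eta_E)$: the differential $d\Phi$ carries a deformation $(v_1,v_2)$ of $(E_1,F_1)$ to the induced deformation of $E_1\otimes F_1^*$ in $H^1(\operatorname{End}(E_1\otimes F_1^*))$, and $\beta$ is the cup product with $H^0(E_1\otimes F_1^*)$, i.e.\ the transpose of the twisted Petri map. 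Since the expected codimension of $B^{k}$ is $k\,h^1=\dim\bigl(H^0(E_1\otimes F_1^*)^*\otimes H^1(E_1\otimes F_1^*)\bigr)$, $\mathcal{Y}_k$ is smooth at $(E_1,F_1)$ of the expected dimension $\rho$ if and only if $\operatorname{Im}(\eta_E)$ fills the whole target, i.e.\ if and only if $\eta_E$ is surjective. Combined with Step~1, this proves the theorem.

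The principal technical point is the tangent-space identification in Step~2: one must select an explicit two-term resolution of the universal $\mathit{Ext}^{\bullet}$-sheaves so that the differential of the ``rank drops to $k$'' condition at $(E_1,F_1)$ matches $\beta\circ d\Phi$ on the nose. Once this is done, the rest follows from the general criterion for smoothness of degeneracy loci applied to $B^{k}$, together with the smooth-fibration statement of Step~1.
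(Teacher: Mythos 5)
Your proposal is correct and follows essentially the same route as the paper: the reduction to smoothness of $\mathcal{Y}_k$ via the projective-bundle structure $U_{\mu_1}(n,d,k)=\mathbb{P}(\mathcal{R}_{\mu_1}(\mathcal{Y}_k))$ from Theorem~\ref{teo2}, followed by the identification of the tangent space of the twisted Brill--Noether locus with $\ker(\eta_E)$ via the degeneracy-locus/twisted Petri argument of \cite[Proposition 3.5]{hitching}. The paper's own proof is exactly this combination, presented more tersely.
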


In Section \ref{unstables} we review some of the standard
facts on unstable bundles. Section \ref{algebra} will be concerned with the algebra of endomorphisms of unstable bundles. In Sections \ref{moduli} and \ref{nonempty} our main results are stated and proved.

{\bf Acknowledgments:} The first author gratefully acknowledges the many helpful suggestions of Peter E. Newstead
during the preparation of the paper. Her thanks are also to Alfonso Zamora Saiz for drawing the author’s attention to some unclear points.
 The first author is a member of the international research group VBAC (Vector Bundles on Algebraic Curves).

{\bf Notation} The rank and degree of a vector bundle $E$ are denoted by $rk(E)$ and $d(E)$ respectively and the slope as the rational number $\mu (E):=\frac{d(E)}{rk(E)}$. We will write the projection in the i-factor as $p_i$ and by $<E\stackrel{f}{\to} F>$ the linear space generated by the function $f:E\to F$.  The cohomology groups $H^i(X,E)$ as $H^i(E)$ and its dimension as $h^i(E)$. Given an exact sequences
$$
\rho: 0\to G \to E \to  F \to 0,
$$
\begin{itemize}
\item by $\rho ^*$ we mean the dual sequence $\rho ^*: 0\to F^* \to E^* \to G^* \to 0,$
\item by  $(M\otimes \rho )$ the sequence $\rho $ tensor by the vector bundle $M$ i.e.
$$
(M\otimes \rho): 0\to M\otimes G \to M\otimes E \to M\otimes F \to 0.
$$
\item By $H^* (\rho)$ we mean the cohomology sequence of $\rho $
$$ 0\to H^0(X,G) \to H^0(X,E) \to H^0(X,F) \stackrel{\delta}{\to} H^1(X,G) \to H^1(X,E) \to  \dots .$$
\item To shorten notation, sometimes we write $E\in H^1(X,F^*\otimes G)$ instead of $(\rho: 0\to G \to E \to  F \to 0) \in H^1(X,F^*\otimes G)$
\end{itemize}
\section{Unstable bundles of HN-lenght 2}\label{unstables}

From now on, $X$ will be a smooth irreducible complex projective curve of genus $g \geq  2$. Recall that a vector bundle $E$ over $X$ is semistable if for all proper subbundle $F\subset E$ the slopes satisfy the following inequality
$$\mu(F)\leq \mu (E).$$
 The vector bundle $E$ is stable if the inequality is strict and unstable if it is not semistable.

In \cite{hn} it was proved that any vector bundle over $X$ has a unique filtration,  called {\it Harder-Narasimhan filtration},
\begin{equation}
 0 = E_0\subset E_1 \subset \cdots \subset E_m = E
 \end{equation}
 such that for $1 \leq i \leq  m,$
\begin{itemize}
  \item $E_i/E_{i-1}$ is semistable and
  \item \begin{equation}\label{desigualdadmu} \mu (E_1) > \mu (E_2/E_1) > \cdots > \mu (E_m/E_{m-1}).
\end{equation}
\end{itemize}
To shorten notation, we write HN-filtration instead of Harder-Narasimhan filtration and for abbreviation, we write $F_i$ instead of the quotient $E_{i+1}/E_i$, when no confusion can arise. Note that $F_0=E_1$.
The HN-max and HN-min of $E$ are defined as $$\mu _{max}(E):=\mu (E_1)\  \ \mbox{and} \ \ \mu _{min}(E):=\mu (F_{m-1}). $$ The vector bundle $E_1$ is called {\it the maximal destabilizing subbundle}.

The following definitions will be used throughout all the paper.

\begin{definition}\begin{em}\label{definitions}
Let $ 0 = E_0\subset E_1 \subset \cdots \subset E_m = E$ be the Harder-Narasimhan filtration of $E$.
\begin{itemize}
\item The HN-type is the sequence of slopes $\sigma=(\mu (E_1), \dots ,\mu( E_m/E_{m-1})).$
\item The number $m$ is called {\it the HN-length} of the HN-filtration.
\item The HN-filtration is called of {\it simple type} if each  $E_i$ and $F_{i}$ are simple for $i=1,\dots ,m-1$.
\item The HN-filtration  is called of {\it coprime type} if the numbers in each slope $\mu (F_{i})$ and $\mu (E_i)$ are coprime for $i=1,\dots ,m-1$.
\item The HN-filtration  is called {\it HN-indecomposable} if  each $E_i$ and $F_i$ is indecomposable $i=1,\dots ,m-1$.
\item $E$ is called {\it $HN$-general} (respectively {\it HN-special})  if all the quotient $F_i$ are general (respectively  special) in the Brill-Noether theory.
\item The extension $\rho _i: 0\to E_{i} \to E_{i+1} \to F_{i} \to 0$ is called the HN(i)-extension and the sequence of extensions $\rho=(\rho _1, \cdots, \rho _{m-1})$ is called the {\it HN-sequences of $E$}.
\end{itemize}
\end{em}
\end{definition}

\begin{remark}\begin{em}\label{remigual}
Let  $ 0 = E_0\subset E_1 \subset \cdots \subset E_m = E$ be the HN-filtration of $E$. Note that:
\begin{enumerate}
\item the condition to being of simple/coprime type or HN-indecomposable is for $i=1,\dots ,m-1$. Therefore, $E_m=E$ is not necessarily simple or indecomposable. 
 \item  Simple type $\Longrightarrow$ HN-indecomposable type. However, HN-indecomposable $\nRightarrow$ simple type.
\item  If $i\ne 0$,  $E_i$ is unstable and $ 0 = E_0\subset E_1 \subset \cdots \subset E_i$  is the HN-filtration of $E_i$. Moreover, if the filtration of $E$ is of coprime, simple, HN-indecomposable,  or general type, the same holds for the HN-filtration of any $E_i.$ This will allow us to make induction on the HN-length.
     \item For each $E_i$ we have an exact sequence
\begin{equation}\label{extension}
\rho _i: 0\to E_{i-1} \to E_i \to F_{i-1} \to 0.
\end{equation}
Thus, $E$ is constructed by a successive sequence of extensions.
\end{enumerate}
\end{em}\end{remark}

In this section we will restrict our attention to the case of vector bundles of HN-type $\sigma =(\mu _1, \mu _2)$. Note that the value of $\mu _1$ fix the value of $\mu _2$ and viceversa. Indeed, if $\mu _1=\frac{d_1}{n_1} $ then $\mu _2=\frac{d(E)-d_1}{rk(E)-n_1}$. Our aim is to describe some properties and parameterize vector bundles of HN-type $\sigma =(\mu _1, \mu _2)$.

Let
\begin{equation}\label{extension tipo2}
(\rho _1: 0\to E_{1} \stackrel{i}{\to} E_2 \stackrel{p}{\to}  F_1 \to 0) \in Ext^1(F_1,E_1)\cong H^1(X,F_1^*\otimes E_1).
\end{equation} be an extension of two semistable bundles with $\mu_1:=\mu(E_1)>\mu (F_1)$.
It follows that $E_2$ is unstable and $0\subset E_1\subset E_2$ is its HN-filtration, with $E_2/E_1=F_1$.

\begin{remark}\begin{em}\label{rem1} Note that given two extensions $(\rho _1: 0\to E_{1} \stackrel{i}{\to} E_2 \stackrel{p}{\to}  F_1 \to 0)$ and $(\rho '_1: 0\to E_{1} \stackrel{i}{\to} E_2' \stackrel{p}{\to}  F_1 \to 0)$ in $Ext^1(F_1,E_1)$, $E_2\cong E_2'$ if and only if $ \rho _1\sim \rho _1 '$ in $Ext^1(F_1,E_1)/(Aut(E_1)\times Aut(F_1))$. Moreover, if $E_1$ and $F_1$ are simple, $E_2\cong E_2'$ iff $\rho =\lambda \rho '$ with $\lambda \in \mathbb{C}^*$.
\end{em}\end{remark}

To prove our first results we use the following lemmas.

\begin{lemma}\label{lema01}\label{dim1} Let
$\rho _1: 0\to E_{1} \stackrel{i}{\to} E_2 \stackrel{p}{\to}  F_1 \to 0$
 be an extension of two semistable bundles with $\mu_1:=\mu(E_1)>\mu (F_1)$. If $\mu (E_1)-\mu(F_1)> 2g-2$, $\rho _1=0$.
 Moreover, if $\rho _1$ is non-trivial then
\begin{enumerate}
\item $H^0(X,E^*_1\otimes E_1)=H^0(X,E^*_1\otimes E_2 )$. Moreover, if  $E_1$ is simple, $h^0(E^*_1\otimes E_2)=1$ and $H^0(E^*_1\otimes E_2)=<E_{1} \stackrel{i}{\to} E_2>$.
\item $H^0(F_1^*\otimes F_1)= H^0(E_2^*\otimes  F_1)$. Moreover, if $F_1$ is simple, $h^0(E_2^*\otimes  F_1)=1 $ and $H^0(E_2^*\otimes  F_1)=<E_2 \stackrel{p}{\to}  F_1>$.
\item  If $F_1$ is simple then  $H^0(F_1^*\otimes E_1)= H^0(F_1^* \otimes E_2 )$.
\end{enumerate}

\end{lemma}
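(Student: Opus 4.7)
The plan is to treat the vanishing assertion and the three parts of the lemma as consequences of one recurring fact: the tensor product $E_1^{*}\otimes F_1$ is semistable of negative slope (in characteristic zero, tensor products of semistables are semistable), hence has no global sections. I would make this the initial observation, since it is the engine of everything that follows.

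For the first statement, I would note that $\rho_1\in \operatorname{Ext}^{1}(F_1,E_1)\cong H^{1}(X,F_1^{*}\otimes E_1)$, and by Serre duality this group is dual to $H^{0}(X,K_X\otimes F_1\otimes E_1^{*})$. The bundle $K_X\otimes F_1\otimes E_1^{*}$ is semistable of slope $2g-2+\mu(F_1)-\mu(E_1)$, which is negative under the hypothesis $\mu(E_1)-\mu(F_1)>2g-2$; hence it has no sections, so $\rho_1$ vanishes.

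For part (1), I would apply $\operatorname{Hom}(E_1,-)$ to $\rho_1$ and look at the initial terms of the long exact sequence
\[
0 \to H^{0}(E_1^{*}\otimes E_1) \to H^{0}(E_1^{*}\otimes E_2) \to H^{0}(E_1^{*}\otimes F_1) \to \cdots .
\]
The third term vanishes by the opening observation, giving the first equality. If $E_1$ is simple then $H^{0}(E_1^{*}\otimes E_1)=\mathbb{C}\cdot\mathrm{id}_{E_1}$; tracing the identity through $H^{0}(E_1^{*}\otimes E_1)\hookrightarrow H^{0}(E_1^{*}\otimes E_2)$ shows it becomes the class of the inclusion $i:E_1\to E_2$, so the one-dimensional space is $\langle i\rangle$. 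Part (2) is entirely analogous, this time applying $\operatorname{Hom}(-,F_1)$ to $\rho_1$, using the vanishing of $H^{0}(E_1^{*}\otimes F_1)$ again, and identifying the distinguished generator with $p:E_2\to F_1$ under the hypothesis that $F_1$ is simple.

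Part (3) is the one place where non-triviality of $\rho_1$ is used essentially. I would apply $\operatorname{Hom}(F_1,-)$ to $\rho_1$ and examine
\[
0 \to H^{0}(F_1^{*}\otimes E_1) \to H^{0}(F_1^{*}\otimes E_2) \to H^{0}(F_1^{*}\otimes F_1) \xrightarrow{\delta} H^{1}(F_1^{*}\otimes E_1).
\]
The coboundary $\delta$ is cup product with the extension class $\rho_1$. Since $F_1$ is simple, $H^{0}(F_1^{*}\otimes F_1)=\mathbb{C}\cdot\mathrm{id}_{F_1}$, and $\delta(\mathrm{id}_{F_1})=\rho_1\neq 0$ by hypothesis; hence $\delta$ is injective and the preceding map $H^{0}(F_1^{*}\otimes E_2)\to H^{0}(F_1^{*}\otimes F_1)$ is zero, yielding $H^{0}(F_1^{*}\otimes E_1)\cong H^{0}(F_1^{*}\otimes E_2)$. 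The only mildly delicate point in the whole argument is identifying $\delta(\mathrm{id}_{F_1})$ with the class of $\rho_1$, which is standard but worth flagging as the conceptual heart of (3); everything else is semistability plus a long exact sequence chase.
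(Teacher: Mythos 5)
Your proof is correct and follows essentially the same route as the paper: the key input in all parts is that $E_1^*\otimes F_1$ is semistable of negative slope and hence has no sections, the three equalities come from the same three long exact cohomology sequences, and part (3) hinges on $\delta(\mathrm{id}_{F_1})=\rho_1\neq 0$ exactly as in the paper. The only cosmetic difference is that you prove $H^1(X,F_1^*\otimes E_1)=0$ via Serre duality on $K_X\otimes F_1\otimes E_1^*$, whereas the paper invokes the equivalent statement that a semistable bundle of slope greater than $2g-2$ has vanishing $H^1$.
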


\begin{proof}
 The inequality  $\mu (E_1)-\mu(F_1)> 2g-2$ implies that $\mu (F_1^*\otimes E_1)>2g-2 $. Hence, since $F_1^*\otimes E_1$ is semistable, $H^1(X,F_1^*\otimes E_1)=0$.

Since  $H^0(X,E_1^*\otimes F_1)=0$, $(1)$ and $(2)$ follow from the cohomology sequences
\begin{equation}\label{eq1}
H^*(E_1^*\otimes (\rho _1 )): 0\to H^0(X, E_1^*\otimes E_1)\to H^0(X,E_1^*\otimes E_2)\to H^0(X, E_1^*\otimes F_1)\stackrel{\delta _0}{\to}  \cdots
\end{equation}
and
\begin{equation}\label{eq3}
H^*((\rho _1 )^*\otimes F_1): 0\to H^0(X,F_1^*\otimes F_1)\to H^0(X,E_2^*\otimes  F_1)\to H^0(X, E_{1}^*\otimes  F_1)\stackrel{\delta _1}{\to} \cdots
\end{equation}

$(3).-$ From the cohomology sequence $H^*(F_1^*\otimes (\rho _1 ))$
\begin{equation}\label{eq2}
 0\to H^0(X,F_1^*\otimes E_1)\to H^0(X,F_1^* \otimes E_2 )\to H^0(X, F_{1}^*\otimes  F_1)\stackrel{\delta _2}{\to} H^1(X,F_1^*\otimes E_1) \cdots
\end{equation}
$H^0(X,F_1^*\otimes E_1)= H^0(X,F_1^* \otimes E_2 )$, since $F_1$ is simple and $\rho _1\ne 0$.

\end{proof}

 \begin{lemma}\label{indecom} Assume $E_1$ and $F_1$ are semistable with $E_1$ simple and $F_1$ indecomposable. Then $E_2$ is indecomposable if and only if $\rho _1 \ne 0$.
\end{lemma}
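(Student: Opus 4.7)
The forward direction is immediate: if $\rho_1 = 0$ then $E_2 \cong E_1 \oplus F_1$, which is decomposable since $E_1$ and $F_1$ are nonzero. For the converse I will argue by contradiction. Suppose $\rho_1 \neq 0$ and yet $E_2 = A \oplus B$ with both summands nonzero, and let $\pi \in \End(E_2)$ be the projection onto $A$; then $\pi$ is an idempotent with $\pi \neq 0, \mathrm{id}$.

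The first step is to show that $\pi$ respects the HN-filtration. Since $E_1$ and $F_1$ are semistable with $\mu(E_1) > \mu(F_1)$, the standard slope argument gives $\Hom(E_1,F_1)=0$, so the composition $E_1 \hookrightarrow E_2 \xrightarrow{\pi} E_2 \twoheadrightarrow F_1$ vanishes. Hence $\pi(E_1) \subset E_1$ and $\pi$ induces idempotents $\phi_1 \in \End(E_1)$ and $\phi_2 \in \End(F_1)$.

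The second step uses the hypotheses on $E_1$ and $F_1$. As $E_1$ is simple, $\End(E_1)=\mathbb{C}$ and $\phi_1 \in \{0,1\}$. As $F_1$ is indecomposable, by Krull--Schmidt $\End(F_1)$ is a local ring, so its only idempotents are $0$ and $\mathrm{id}$, giving $\phi_2 \in \{0,1\}$. I will then dispose of the four possible cases:
\begin{itemize}
\item $(\phi_1,\phi_2)=(1,0)$: $\pi(E_2) \subset E_1$ and $\pi|_{E_1}=\mathrm{id}$, so $\pi$ is a retraction of $E_1 \hookrightarrow E_2$, forcing $\rho_1 = 0$.
\item $(\phi_1,\phi_2)=(0,1)$: $\pi|_{E_1}=0$, so $\pi$ factors through $p$, and the induced map $F_1 \to E_2$ is a section of $p$, again forcing $\rho_1 = 0$.
\item $(\phi_1,\phi_2)=(0,0)$: $\pi$ factors as $E_2 \xrightarrow{p} F_1 \xrightarrow{f} E_1 \xrightarrow{i} E_2$ for some $f$; since $p\circ i = 0$, we get $\pi^2=0$, and combining with $\pi^2=\pi$ yields $\pi = 0$.
\item $(\phi_1,\phi_2)=(1,1)$: apply the previous case to $\mathrm{id}-\pi$, obtaining $\pi=\mathrm{id}$.
\end{itemize}
Each case contradicts one of $\pi \neq 0$, $\pi \neq \mathrm{id}$, or $\rho_1 \neq 0$, completing the argument.

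The main conceptual point (and the only nontrivial input beyond slope manipulations) is that indecomposability of $F_1$ implies that $\End(F_1)$ is local, which is the Krull--Schmidt theorem for coherent sheaves on a projective variety; everything else reduces to tracking idempotents through the two-step filtration.
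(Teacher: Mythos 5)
Your proof is correct, and it takes a genuinely different route from the paper's. The paper argues geometrically: given a splitting $E_2=G_1\oplus G_2$ with splitting morphism $\gamma$, it normalizes $\mu(G_2)\leq\mu(E_2)$, forms the image and kernel of the composite $E_1\to E_2\to G_2$, and runs a three-case diagram chase, using its Lemma \ref{lema01}(1) (that $H^0(E_1^*\otimes E_2)$ is spanned by the inclusion when $E_1$ is simple and $\rho_1\neq 0$) together with slope estimates and the indecomposability of $F_1$. You instead work entirely inside $\End(E_2)$: the single slope fact $\Hom(E_1,F_1)=0$ shows any idempotent $\pi$ preserves the HN-filtration, and then simplicity of $E_1$ and locality of $\End(F_1)$ (Atiyah's result, which the paper itself invokes in Section \ref{algebra} via $\End(F_1)\cong(\mathrm{Id})\oplus \mathrm{Nil}(F_1)$) reduce everything to four cases of induced idempotents, each of which forces $\pi=0$, $\pi=\mathrm{id}$, or a splitting of $\rho_1$. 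Your argument is shorter, avoids Lemma \ref{lema01} and the slope normalization entirely, and makes transparent exactly which hypotheses are used where; the paper's argument is more explicit about the subsheaves involved, which is in keeping with its style elsewhere. All four of your cases check out: in particular, in the $(0,0)$ case the factorization $\pi=i\circ f\circ p$ is legitimate because $\phi_1=0$ lets $\pi$ factor through $p$ and $\phi_2=0$ forces the resulting map $F_1\to E_2$ to land in $\Ker(p)=E_1$, after which $p\circ i=0$ gives $\pi^2=0$.
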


\begin{proof} One direction is trivial. Assume $\rho _1\ne 0.$
Suppose, contrary to our claim, that $E_2=G_1\oplus G_2$ and the extension
$$0= \rho : 0\to G_1\stackrel{j}{\to} E_2\stackrel{q}{\to} G_2\to 0$$ splits. Denote by  $\gamma :G_2\to E_2$ the splitting morphism, i.e.  $q\circ \gamma = id_{G_2}$.  We can certainly assume that $\mu (G_2) \leq \mu (E_2)$, for if not, we replace $G_2$ by $G_1$.

The inclusion $i:E_1\to E_2$ in the extension $\rho _1$ induces the following diagram
$$
\begin{array}{ccccccccc}
&& 0 && 0&& 0&& \\
&& \downarrow& & \downarrow&& \downarrow&& \\
0&\to& M& {\to}& E_1&\stackrel{\sigma}{\to}& H&\to & 0\\
&& \downarrow& & \downarrow i&& \downarrow \nu&& \\
0&\to &G_1&\stackrel{j}{\to} & E_2 &\stackrel{q}{\rightleftarrows}_{\gamma} & G_2& \to& 0
\end{array}
$$

We have different cases.

Case (1)-. $H\ne 0$ and $M\ne 0.$

In this case there is a contradiction to Lemma \ref{lema01},(1) since the morphism  $\gamma\circ \nu \circ \sigma :E_1\to E_2$ is not a scalar multiple of the inclusion $i:E_1\to E_2$.

Case (2)-. $H=E_1$.

The diagram
$$
\begin{array}{ccccccccc}
&&  && 0&& 0&& \\
&& & & \downarrow&& \downarrow&& \\
&& & & E_1&=& E_1& & \\
&&& & \downarrow i&& \downarrow && \\
0&\to &G_1&\stackrel{j}{\to} & E_2 &\stackrel{q}{\rightleftarrows}_{\gamma} & G_2& \to& 0
\end{array}
$$

induces the following diagram

$$
\begin{array}{ccccccccc}
&&  && 0&& 0&& \\
&& & & \downarrow&& \downarrow&& \\
&& & & E_1&=& E_1& & \\
&&& & \downarrow i&& \downarrow && \\
0&\to &G_1&\stackrel{<}{\to} & E_2 &\stackrel{<}{\rightleftarrows}_{\gamma} & G_2& \to& 0 \\
&& \|& & \downarrow \vee && \downarrow&& \\
0&\to& G_1& \stackrel{\leq }{\to}& F_1&\stackrel{\sigma}{\to}& T&\to & 0\\
&& \downarrow& & \downarrow && \downarrow && \\
&& 0 && 0&& 0&&.
\end{array}
$$

The semistability of $F_1$ implies that $\mu (G_1) \leq \mu(F_1)< \mu (E_2)$, hence that $\mu (E_2)< \mu (G_2)$, which is a contradiction to our assumption.

Case (3)-. $H=0$.

In this case we have the following diagram

$$
\begin{array}{ccccccccc}
 && 0&& 0&& &&  \\
 & & \downarrow&& \downarrow&& &&  \\
 & & E_1&=& E_1& & && \\
& & \downarrow i&& \downarrow i && && \\
0&\to &G_1&\stackrel{j}{\to} & E_2 &\stackrel{q}{\rightleftarrows}_{\gamma} & G_2& \to& 0 \\
&& \downarrow& & \downarrow p && \|&& \\
0&\to& T_1& \stackrel{\jmath }{\to}& F_1&\stackrel{\ell}{\to}& G_2&\to & 0\\
&& \downarrow& & \downarrow && \downarrow && \\
&& 0 && 0&& 0&&.
\end{array}
$$

If $0\ne \eta := p\circ \gamma : G_2\to F_1$, $$\ell\circ \eta = \ell\circ p\circ \gamma =q\circ \gamma =id _{{G_2}}$$ and hence $F_1$ is decomposable which is a contradiction, since $F_1$ is indecomposable.

If $\eta =0$ then $G_2\subset E_2\subset G_1$, which is also a contradiction since $E_2=G_1\oplus G_2$.

Therefore, $E_2$ is indecomposable, as claimed.

\end{proof}

Let $T=Ext^1(F_1,E_1)=H^1(X,F_1^*\otimes E_1)$. It is well known (see \cite{nr}) that $\mathbb{P}(T)$ parameterize non-trivial extensions  $\rho _1: 0\to E_{1} \stackrel{i}{\to} E \stackrel{p}{\to}  F_1 \to 0$ and that if $\mathbb{H}$ is the hyperplane bundle over $\mathbb{P}(T)$ the extension
$$ 0\to p_1^*( E_1)\otimes p_2^*(\mathbb{H}) \to \mathcal{E} \to p_1^*( F_1)\to 0 $$
that corresponds to the identity under the isomorphism $$ End(T)\cong H^1(X\times \mathbb{P}(T), p_1^*(Hom(F_1,E_1))\otimes p_2^*(\mathbb{H}))$$ has universal properties. Thus, Lemma \ref{indecom} leads us to the following result.

\begin{proposition}\label{propfm} If $E_1$ and $F_1$ are simple semistable vector bundles with $\mu (E_1)>\mu (F_1)$ then the pair $(\mathbb{P}(T),  \mathcal{E})$ is a fine moduli space for
 indecomposable unstable bundles $E$ with $E_1$ as the maximal destabilizing subbundle of $E$ and $F_1=E/E_1$. Moreover, if $\mu (E_1)-\mu(F_1)> 2g-2$, $\mathbb{P}(T)= \emptyset$.

 \end{proposition}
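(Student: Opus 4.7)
The plan is to split the proposition into the emptiness statement and the fine-moduli statement, and handle them separately. The emptiness claim is immediate from Serre-duality vanishing: the hypothesis $\mu(E_1)-\mu(F_1)>2g-2$ gives $\mu(F_1^*\otimes E_1)>2g-2$, and since $F_1^*\otimes E_1$ is semistable on a curve (tensor of semistables), $H^1(X,F_1^*\otimes E_1)=0$. This is exactly the vanishing already used in the proof of Lemma \ref{lema01}. Hence $T=0$ and $\mathbb{P}(T)=\emptyset$.

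For the fine-moduli statement, I would first establish the bijection on closed points. By Lemma \ref{indecom}, a non-trivial class $\rho\in T$ produces an indecomposable bundle; conversely, every indecomposable bundle in the moduli problem is a non-split extension of $F_1$ by $E_1$, and hence corresponds to a non-zero class. Since $E_1$ and $F_1$ are simple, $\mathrm{Aut}(E_1)\times\mathrm{Aut}(F_1)\cong\mathbb{C}^*\times\mathbb{C}^*$ acts on $T$ through a single scalar, so Remark \ref{rem1} identifies isomorphism classes of $E$ with closed points of $\mathbb{P}(T)$.

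To promote this bijection to a fine-moduli statement, I would take any flat family $\mathcal{F}$ on $X\times S$ whose fibers are bundles of the claimed type and glue the fiberwise maximal destabilizing subbundle into a flat subbundle $\mathcal{E}_1\subset\mathcal{F}$ with quotient $\mathcal{F}_1$; this is legitimate because the Harder--Narasimhan polygon is constant in the family (both $E_1$ and $F_1$ are fixed up to isomorphism). Simplicity of $E_1,F_1$ combined with cohomology and base change applied to $p_{2*}\mathcal{H}om(\mathcal{E}_1,p_1^*E_1)$ and its analogue for the quotient produces line bundles $L_1,L_2$ on $S$ with $\mathcal{E}_1\cong p_1^*E_1\otimes p_2^*L_1$ and $\mathcal{F}_1\cong p_1^*F_1\otimes p_2^*L_2$. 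K\"unneth/base change then places the relative extension class of $\mathcal{F}$ in $H^0(S,T\otimes L_1\otimes L_2^{-1})$; it is nowhere vanishing by fiberwise indecomposability and so determines a morphism $\phi\colon S\to\mathbb{P}(T)$ with $\phi^*\mathbb{H}\cong L_1\otimes L_2^{-1}$, such that $(\mathrm{id}_X\times\phi)^*\mathcal{E}$ recovers $\mathcal{F}$. Uniqueness of $\phi$ is forced by the closed-point bijection already established.

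The main obstacle will be the bookkeeping with the line bundles $L_1,L_2$ and the hyperplane bundle $\mathbb{H}$: one has to verify that the twist $p_2^*\mathbb{H}$ built into the tautological sequence is exactly what is needed to make $(\mathrm{id}_X\times\phi)^*\mathcal{E}$ match $\mathcal{F}$, absorbing the ambiguity coming from the identifications $\mathcal{E}_1\cong p_1^*E_1\otimes p_2^*L_1$ and $\mathcal{F}_1\cong p_1^*F_1\otimes p_2^*L_2$. The flatness of the relative HN-filtration is not a genuine obstacle thanks to the constant HN-type, but deserves explicit mention in a complete write-up.
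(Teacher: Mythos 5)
Your argument is correct and follows essentially the same route as the paper: the emptiness claim is the same semistability-plus-Serre-duality vanishing of $H^1(X,F_1^*\otimes E_1)$ already recorded in Lemma \ref{lema01}, and the identification of isomorphism classes of indecomposable $E$ with points of $\mathbb{P}(T)$ rests on Lemma \ref{indecom} and Remark \ref{rem1} exactly as in the text. The only difference is that you verify the universal property of the tautological extension by hand (relative HN-filtration, base change, the nowhere-vanishing section of $T\otimes L_1\otimes L_2^{-1}$), whereas the paper simply cites \cite{nr} (and, in the analogous family setting, \cite{lange}) for this.
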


 From now on we tacitly assume that $0<\mu (E_1)-\mu(F_1)\leq 2g-2.$ By Riemann-Roch
\begin{equation}\label{dimension}
\dim \mathbb{P}(T)=h^0(F_1^*\otimes E_1)-\tilde{d}+\tilde{r}(g-1)-1 ,
\end{equation}
where $\tilde{d}:= deg(E_1)rk(F_1) -deg(F_1)rk(E_1)$ and $\tilde{r}:=rk(E_1)rk(F_1)$.
The dimension of $\mathbb{P}(T)$ depends on $h^0(F_1^*\otimes E_1)$. Actually, $\dim \mathbb{P}(T)>0$ if and only if $h^0(F_1^*\otimes E_1)>\tilde{d}+\tilde{r}(1-g)+1$. Since $F_1^*\otimes E_1$ is semistable and $0<\mu (E_1)-\mu(F_1)\leq 2g-2 $, the study of the dimension $h^0(F_1^*\otimes E_1)$ belongs to the Brill-Noether Theory for vector bundles.

We will touch only a few aspects of the theory. Recall that a vector bundle $G$ is special if $h^0(G)\cdot h^1(G)\ne 0$, otherwise is called general. The {\it Brill-Noether loci} are defined as
$$B(n,d,k):=\{G\in M(n,d): h^0(G)\geq k \},$$
where $M(n,d)$ is the moduli space of stable vector bundles of degree $d$ and rank $n$ over $X$. Moreover, the Brill-Noether loci induce the following filtration:
$$M(n,d)\supseteq B(n,d, 1)\supseteq \cdots \supseteq B(n,d, k) \supseteq B(n,d,k+1) \supseteq \cdots. $$
The number $$\rho (g,n,d,k):= n^2(g-1)+1 -k(k-d+n(g-1))$$ is called {\it the Brill-Noether number} and is the expected dimension of $B(n,d,k)$. The Brill-Noether loci are also defined on the moduli space of $S$-equivalent semistable bundles (see \cite{bgn}).

\begin{remark}\begin{em}\label{remigualk}
 Denote by $\chi $ the number $\chi :=d+n(1-g)$.
Note that if $k\leq \chi$,  $B(n,d,k)= M(n,d)$. Thus, to have proper Brill-Noether loci we will assume that $k> \chi$.
\end{em}\end{remark}

For the convenience of the reader we summarize the main results of these concepts and the relevant material for us
without proofs, thus making our exposition self-contained. See \cite{im} for a survey of the main results on the Brill-Noether Theory.  Recall that $X$ is a {\it Petri curve} if the multiplication map $\mu : H^0(X,L)\otimes H^0(X, K_X\otimes L^*) \to H^0(X, K_X)$ is injective for every line bundle $L$ on $X$ (see \cite{arb}).

\begin{theorem}\label{bn} Let $G$ be a semistable vector bundle of rank $n$ and degree $d$ on $X$.
\begin{enumerate}
\item If $G$ is general,
$$
h^0(G) =
\left\{ \begin{array}{ll}
          0 & \text{if $ 0<\mu(G) <g-1$,}\\
         {d}+{r}(1-g) & \text{if $g-1 \leq \mu(G) <2(g-1)$.}
          \end{array}\right.
$$
\item If $B(n,d,k)$ is proper, the general point $E$ of every proper component of $B(n,d,k)$ has $h^0(E)=k$.
\item (Clifford's Theorem for vector bundles \cite[Theorem 2.1]{bgn}) If $G$ is special, $$h^0(G)\leq \frac{d}{2} +n.$$
\item  (\cite{bgn} and \cite{mer}) For any curve $X$, if $0<\mu(G)<2$,
$$B(n,d,k)\ne \emptyset  \ \mbox{ if and only if} \  n<d+(n-k)g \ \mbox{ and} \  (n,d,k)\ne (n,n,n).$$
Moreover, if $B(n,d,k)$ is not empty, then it is irreducible, of dimension $\rho (g,n,d,k)$ and $Sing(B(n,d,k))= B(n,d,k+1)$.

\item (\cite{bmno}) For general curve $B(n,d,k)$ is not empty if $d = nd'+ d''$ and $ n\leq d'' +(n-k)g$ and the following conditions are satisfied $$0 < d'' < 2n, \  d' \geq  \frac{(s-1)(s+g)}{s}, \  \mbox{with} \ 1 < s < g, \  \ (d'', k)\ne (n,n).$$
\item (\cite{im}) If $X$ is generic and $d = nd_1+d_2, \ k =nk_1+k_2, \ d_2 < n, \ k_2 < n$ for some positive integers $d_i$ and $k_i$ then $B(n,d,k)\ne \emptyset $ and has one component of the expected dimension if one of the following conditions is satisfied:
    $$
    \begin{array}{ll}
g-(k_1+1)(g-d_1+k_1-1)\geq 1, & 0=d_2 \geq  k_2 \\
g-k_1(g-d_1+k_1+1) > 1, & d_2=k_2=0 \\
 g-(k_1+1)(g-d_1+k_1) \geq  1, & d_2 < k_2.
 \end{array}
 $$
 \item (\cite{bbn2}) Let $X$ be a Petri curve of genus $g\geq 3,  n\geq 5$ and $g\geq 2n-4$. Then $B(n,d,n+1)$ is non-empty.
\end{enumerate}
\end{theorem}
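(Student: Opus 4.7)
The plan is to recognize that Theorem \ref{bn} is a compendium of known Brill--Noether results; the bulk of the proof reduces to matching each item with its cited source. Items (3)--(7) already carry explicit attributions: (3) is the Clifford bound of \cite[Theorem 2.1]{bgn}; (4) packages the non-emptiness, irreducibility, dimension and singular-locus description of $B(n,d,k)$ in the slope range $0<\mu(G)<2$ from \cite{bgn} and \cite{mer}; (5) is the general-curve existence theorem of \cite{bmno}; (6) is the component-existence statement of \cite{im} for generic curves; and (7) is the Petri-curve non-emptiness of $B(n,d,n+1)$ from \cite{bbn2}. For each of these I would simply transcribe the statement and verify that the numerical hypotheses agree with those imposed here on $(g,n,d,k)$, which is a bookkeeping task rather than a mathematical one.

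Only items (1) and (2) ask for a direct argument. For (1), the strategy is Riemann--Roch combined with upper semicontinuity of $h^0$ on $M(n,d)$. Writing $\chi(G)=d+n(1-g)$, when $0<\mu(G)<g-1$ one has $\chi(G)<0$, and it suffices to exhibit a single semistable bundle with $h^0=0$ (produced by a generic extension of line bundles of sufficiently negative slope, all stabilized to land in $M(n,d)$) and invoke semicontinuity to conclude that the general $G$ satisfies $h^0(G)=0$. When $g-1\le\mu(G)<2(g-1)$, apply the previous case to $K_X\otimes G^*$, whose slope $2g-2-\mu(G)$ lies in $(0,g-1)$; Serre duality then forces $h^1(G)=0$ generically, hence $h^0(G)=\chi(G)=d+n(1-g)$.

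For (2), upper semicontinuity again does the work: $B(n,d,k+1)$ is closed inside $B(n,d,k)$, so if $Z$ is a proper component of $B(n,d,k)$ and $Z\subseteq B(n,d,k+1)$, then $Z$ would also be a proper component of $B(n,d,k+1)$, contradicting the choice of $Z$ as a full component along which the jump happens at level $k$. Thus on the dense open subset $Z\setminus B(n,d,k+1)$ a general point $E$ satisfies $h^0(E)=k$ exactly.

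The only genuine obstacle lies in parts (5)--(7), where existence of semistable bundles with many sections is delicate and requires the curve to be either general or Petri; the proofs involve BGN-type extensions, dual-span bundles and coherent-systems arguments that form the technical core of higher-rank Brill--Noether theory. I would not attempt to reproduce those constructions and would instead rely entirely on the cited sources, since the purpose of the theorem is to collect the background input needed for the moduli-theoretic results of later sections.
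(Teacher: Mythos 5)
The paper offers no proof of this theorem at all: it is explicitly presented as a summary of known Brill--Noether results ``without proofs,'' so for items (3)--(7) your strategy of transcribing and matching hypotheses against \cite{bgn}, \cite{mer}, \cite{bmno}, \cite{im} and \cite{bbn2} is exactly what the paper does, and your Riemann--Roch/Serre-duality argument for item (1) is the standard one (with the minor caveat that at the boundary value $\mu(G)=g-1$ the Serre dual $K_X\otimes G^*$ has slope exactly $g-1$, which falls outside the strict range of your first case, so the vanishing $h^0(G)=0$ there needs the separate, also standard, fact that the theta divisor is proper).

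Your argument for item (2), however, has a genuine gap: it is circular. Semicontinuity shows that $\{h^0=k\}$ is open in $B(n,d,k)$, but it does not show that this open set meets a given component $Z$. The scenario to exclude is precisely $Z\subseteq B(n,d,k+1)$, and your claimed contradiction --- that $Z$ would then be ``a proper component of $B(n,d,k+1)$, contradicting the choice of $Z$'' --- contradicts nothing: $Z$ is an arbitrary component of $B(n,d,k)$, and there is no a priori ``level at which the jump happens.'' Indeed, if $Z\subseteq B(n,d,k+1)$ then maximality forces $Z$ to \emph{be} a component of $B(n,d,k+1)$, and iterating (using the Clifford bound to terminate) only shows that the general point of $Z$ has $h^0=k'$ for some $k'\geq k$, not $k'=k$. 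The actual content of item (2) is the statement that no component of $B(n,d,k)$ lies entirely inside $B(n,d,k+1)$; for line bundles this is the classical argument of adding and subtracting points of the curve (ACGH), and for higher rank it requires a deformation or elementary-transformation argument as in the survey \cite{im}. Since the paper itself simply cites the literature here, the honest fix is to do the same for item (2) rather than to offer the semicontinuity argument as a proof.
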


\begin{remark}\begin{em} The non-emptiness of the locus $B(n,d,n+1)$ has been proved with different relation among $n,d$ and $g$ (see \cite{bbn2},  \cite{bbn1} or \cite{b} and the bibliography there). For simplicity we just quote one of the results in Theorem \ref{bn},(7).
\end{em}\end{remark}

According to the above theorem, in some cases the non emptiness and dimension of the moduli space $\mathbb{P}(T)$
differ from general or special curves.
Theorem \ref{bn} will be relevant in Section \ref{nonempty}  when we consider families of extensions of semistable bundles.

The remainder of this section will be devoted to the general properties of unstable bundles.

It is well know that $H^0(E)=0$ for semistable vector bundles $E$ of negative degree. However, unstable bundles of negative degree can have sections. Indeed,  let $F$ be a bundle of positive degree with $H^1(F)\ne 0$. Any extension $\rho:  0\to \mathcal{O} \to G \to F^* \to 0 \in H^1(F)$ defines an unstable bundle $G$ with at least one section. In the following proposition we proof for unstable bundles some of the well known properties of semistable bundles.  They follow directly from the properties of the HN-filtration. Maybe some are already known but we prefer to include them in one proposition.

\begin{proposition}\label{propunstable} Let $E$ be an indecomposable unstable vector bundle of rank $n$ and degree $d$ and $ 0 = E_0\subset E_1 \subset \cdots \subset E_m = E$ its Harder-Narasimhan filtration.
\begin{enumerate}
\item If $\mu _{min}(E)>2g-1$ then $H^1(X,E)=0$ and $E$ is (globally) generated. Moreover, indecomposable unstable bundles with fix HN-filtration are bounded.
\item If $E$ is HN-special, $h^0(E)\leq \frac{d}{2}+n$.
\item If $E$ is HN-general and $\mu _{min}(E)>g+1$, $H^1(X,E)=0$ and $h^0(E)= d(E)+rk(E)(1-g)$. Moreover, if $0<\mu _{max}(E) <g, \  h^0(E)=0.$
\end{enumerate}
\end{proposition}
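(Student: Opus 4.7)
My plan is to prove the three statements by induction on the HN-length $m$, using the short exact sequences $\rho_i: 0 \to E_i \to E_{i+1} \to F_i \to 0$ from Remark \ref{remigual}(4) and the long cohomology sequences $H^*(\rho_i)$. For each part the strategy is the same: verify the relevant cohomological property on each semistable quotient $F_i$, then propagate it up the filtration through the induction hypothesis on $E_{m-1}$, which inherits the same HN-type assumptions by Remark \ref{remigual}(3).

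For (1), the base case $m=1$ is classical: a semistable bundle of slope $> 2g-1 > 2g-2$ has $H^1=0$ by Serre duality, and the usual twist-and-generate argument (the bundle tensored by $\mathcal{O}(-x)$ still has slope $> 2g-2$) shows it is globally generated. For the inductive step, $\mu_{min}(E) > 2g-1$ forces $\mu(F_i) > 2g-1$ for every $i$, so the induction hypothesis yields $H^1(E_{m-1}) = 0$ with $E_{m-1}$ generated, while the base case applies to $F_{m-1}$; the cohomology sequence of $\rho_{m-1}$ then gives $H^1(E) = 0$, and a five-lemma argument at each stalk combines generators of $E_{m-1}$ with lifts of generators of $F_{m-1}$ to generate $E$. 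Boundedness follows because each $F_i$ varies in a projective moduli of semistables with fixed invariants, while the extension classes at each step sweep out a coherent $Ext^1$-sheaf of finite rank over this parameter, yielding a finite-type parameter space for the whole family.

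For (2), HN-speciality makes each $F_i$ special, so Clifford (Theorem \ref{bn}(3)) yields $h^0(F_i) \leq d(F_i)/2 + \rk(F_i)$; summing over $i$ through the successive cohomology sequences gives $h^0(E) \leq \sum_i h^0(F_i) \leq d/2 + n$. For (3), each $F_i$ is general with $\mu(F_i) \geq \mu_{min}(E) > g+1 > g-1$: if $\mu(F_i) < 2(g-1)$, Theorem \ref{bn}(1) gives $h^0(F_i) = d(F_i) + \rk(F_i)(1-g)$, and Riemann--Roch forces $h^1(F_i) = 0$; if $\mu(F_i) \geq 2(g-1)$, then $H^1(F_i) = 0$ directly by Serre duality (the bundle $K_X \otimes F_i^*$ is semistable of non-positive slope) and the same formula for $h^0$ holds. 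Inductively $H^1(E_{i-1}) = 0$ at each step, so the long exact sequences split in cohomology and give $H^1(E) = 0$ together with $h^0(E) = \sum_i h^0(F_i) = d + n(1-g)$. The final sentence of (3) is analogous: the hypothesis $0 < \mu_{max}(E) < g$ keeps each $\mu(F_i)$ in the vanishing regime of the first case of Theorem \ref{bn}(1), so $h^0(F_i) = 0$ for each $i$ and the successive cohomology sequences force $h^0(E) = 0$. The main technical subtlety I anticipate is the boundedness clause in (1), which requires assembling the relative $Ext^1$-sheaf over a product of semistable moduli spaces into an honest finite-type parameter.
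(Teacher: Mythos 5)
Your proposal is correct and follows essentially the same route as the paper: induction on the HN-length via the cohomology sequences of the HN-extensions $\rho_i$, with part (1) coming from the vanishing of $H^1$ for semistable bundles of slope $>2g-2$, part (2) from Clifford's theorem applied to each semistable quotient and the inequality $h^0(E_i)\leq h^0(E_{i-1})+h^0(F_{i-1})$, and part (3) from Theorem \ref{bn}(1) together with Riemann--Roch. The only divergence is the boundedness clause of (1), where the paper simply cites Maruyama (and Hoskins--Kirwan) while you sketch a direct parameter-space construction from relative $Ext^1$-sheaves over products of moduli of semistables; your sketch is plausible and more self-contained, but the citation is the shorter and safer route.
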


\begin{proof}
 Let $\rho _i: 0\to E_{i-1} \to E_i \to F_{i-1} \to 0$ be the HN(i)-extension of $E$ and
\begin{equation}\label{cohom0}
0\to H^0(X,E_{i-1}) \to H^0(X,E_i) \to H^0(X,F_{i-1}) \stackrel{\delta}{\to}
\end{equation}
\begin{equation}\label{cohom1}
\stackrel{\delta}{\to} H^1(X,E_{i-1}) \to H^1(X,E_i) \to H^1(X,F_{i-1}) \to 0
\end{equation}
its cohomology sequence. Recall that we have the inequalities $$\mu _{max}(E)=\mu (E_1) > \mu (F_1) > \cdots > \mu (F_{m-1})= \mu _{min}(E)>2g-1.$$
The proofs are by induction on the HN-length of $E$. We give the proof for the case $m=2$ using the  HN(2)-extension.

$(1)$.- Since $\mu (E_1) > \mu (F_1)= \mu _{min}(E_2)>2g-1$, (\ref{cohom1}) shows that $H^1(E_2)=0$, by the semistability of $E_1$ and $F_1$. As $E_1$ and $F_1$ are generated we have that $E_2$ is generated. That are bounded follows from \cite{maruyama1} (see also \cite{hk} ).

$(2)$.- We conclude from (\ref{cohom0}) that  $h^0(E_2)\leq h^0(E_1) +h^0(F_1), $  hence that $$ h^0(E_1) +h^0(F_1)\leq \frac{d(E_1)}{2}+rk(E_1)+\frac{d(F_1)}{2}+rk(F_1)$$ (see Theorem \ref{bn} (2))  and finally that  $$h^0(E_2)\leq  \frac{d(E_2)}{2}+rk(E_2).$$

 $(3)$.- Follows also from (\ref{cohom0}) and (\ref{cohom1}), Riemann-Roch and the assumption that HN-filtration is HN-general.

From Remark \ref{remigual} we can apply induction. Analysis similar to that in the proof of $m=2$ (now using the  HN(i)-extension), shows that $(1),(2)$ and $(3)$ are satisfied for any $m>2$. The details are left to the reader.

\end{proof}

It is known (see \cite[Lemma 1.3.3]{danle}) that if $E$ and $E'$ are two unstable vector bundles with $\mu_{min}(E)>\mu_{max}(E')$ then $Hom(E,E')=0$. Our aim is to study $Hom(E,E)=End(E)$ for unstable bundles.

\section{Algebra of endomorphisms}\label{algebra}

In this section, we determine the structure of the algebra of endomorphisms $End(-)$ of a HN-indecomposable vector bundle of rank $n$, degree $d$ and of HN-length $2$. Using the HN-filtration, we give also an upper bound for $\dim End(-)$ when the HN-length $>2$.

Recall from  \cite{atiyah}, that if $E$ is indecomposable vector bundle then
\begin{equation}\label{nil}
End(E)\cong (Id_E)\oplus Nil(E),
\end{equation}
where $Nil(E)$ is the subset $Nil(E)\subset End(E)$ consisting of all nilpotent global endomorphisms of $E$. In \cite{yo4}, the following upper bound
\begin{equation}\label{dimalgs}
\dim End(E)\leq 1+\frac{n(n-1)}{2}
\end{equation}
was given for indecomposable semistable bundles of rank $n$. The bound for indecomposable bundles of HN-length $2$ is established by our next proposition.

\begin{proposition}\label{propalg} Let $E_2$ be a indecomposable unstable bundle and $ 0 = E_0\subset E_1 \subset E_2$ be its HN-filtration. Then, if $E_1$ and $F_1=E_2/E_1$ are indecomposable then
\begin{equation}\label{dimalg2}
\dim End(E_2)\leq 1+\frac{n(n-1)}{2} +  h^0(F_1^*\otimes E_1),
\end{equation}
where $F_1=E_2/E_1$.
\end{proposition}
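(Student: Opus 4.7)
The plan is to bound $\dim\End(E_2)$ by the dimensions of the endomorphism algebras of the semistable pieces $E_1$ and $F_1$, with a correction equal to $h^0(F_1^*\otimes E_1)$, and then to invoke \eqref{dimalgs} for each piece.

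First I would observe that every $\phi\in\End(E_2)$ preserves $E_1$: the composition $E_1\hookrightarrow E_2\stackrel{\phi}{\to}E_2\twoheadrightarrow F_1$ is a morphism from $E_1$ to $F_1$, which vanishes because $E_1$ and $F_1$ are semistable with $\mu(E_1)>\mu(F_1)$ (the standard fact recalled at the end of Section \ref{unstables}). Hence $\phi$ induces $\phi|_{E_1}\in\End(E_1)$ and an induced map $\bar\phi\in\End(F_1)$, and the assignment $\Psi(\phi):=(\phi|_{E_1},\bar\phi)$ defines a $\mathbb{C}$-linear map $\Psi\colon\End(E_2)\to\End(E_1)\oplus\End(F_1)$. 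If $\Psi(\phi)=0$ then $\phi$ factors uniquely as $E_2\twoheadrightarrow F_1\stackrel{\psi}{\to}E_1\hookrightarrow E_2$ with $\psi\in\Hom(F_1,E_1)=H^0(F_1^*\otimes E_1)$, and conversely every such $\psi$ gives an element of $\ker\Psi$. Therefore
$$\dim\End(E_2)\le h^0(F_1^*\otimes E_1)+\dim\End(E_1)+\dim\End(F_1).$$

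Next, since $E_1$ and $F_1$ are indecomposable and semistable of ranks $n_1$ and $n_2=n-n_1$, the bound \eqref{dimalgs} yields $\dim\End(E_1)\le 1+\tfrac{n_1(n_1-1)}{2}$ and $\dim\End(F_1)\le 1+\tfrac{n_2(n_2-1)}{2}$. The elementary identity $\tfrac{n(n-1)}{2}=\tfrac{n_1(n_1-1)+n_2(n_2-1)}{2}+n_1n_2$, combined with $n_1n_2\ge 1$, gives $\dim\End(E_1)+\dim\End(F_1)\le 1+\tfrac{n(n-1)}{2}$, and substituting into the previous inequality yields the claimed bound.

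The substantive ingredients are just the vanishing $\Hom(E_1,F_1)=0$ for semistable bundles of decreasing slope and the semistable bound \eqref{dimalgs} from \cite{yo4}. The main subtlety is the arithmetic at the end: one must check that the two ``$+1$''s coming from the identity endomorphisms of $E_1$ and $F_1$ are absorbed by the $n_1n_2\ge 1$ margin, rather than producing an undesired $+2$ on the right-hand side of the stated bound.
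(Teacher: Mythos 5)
Your proof is correct and in substance the same as the paper's: both reduce to the key inequality $\dim\End(E_2)\le h^0(F_1^*\otimes E_1)+\dim\End(E_1)+\dim\End(F_1)$ (via $\Hom(E_1,F_1)=0$ for semistable bundles of decreasing slope) and then finish with the identical arithmetic from \eqref{dimalgs} together with $n_1n_2\ge 1$. The only difference is packaging: the paper extracts that inequality from the cohomology sequences of $\rho_1$ twisted by $E_2^*$, $F_1$ and $E_1$, whereas you obtain it directly from the observation that every endomorphism of $E_2$ preserves the HN-filtration, with kernel of the induced map to $\End(E_1)\oplus\End(F_1)$ identified with $\Hom(F_1,E_1)$.
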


\begin{proof} Let  $\rho _1 : 0\to E_1 \stackrel{\iota}{\to} E_2\stackrel{p}{\to} F_1 \to 0$ be the HN(2)-sequence of $E_2$. Assume $n_1:=rk(E_1)$ and $n_2:=rk(F_1)$.
We will use the following sequences
\begin{equation}\label{eqalg1}
 0\to H^0(X, E_1\otimes E_2^*)\to H^0(X, E_2\otimes E_2^*)\to H^0(X, F_1\otimes E_2^*)\stackrel{\delta _0}{\to} H^1(X, E_1\otimes E_2^*) \cdots
\end{equation}
\begin{equation}\label{eqalg2}
 0\to H^0(X, F_1^*\otimes F_1)\to H^0(X, E_2^*\otimes  F_1)\to H^0(X, E_{1}^*\otimes  F_1)\stackrel{\delta _1}{\to} H^1(X, F_1^*\otimes F_1) \cdots
\end{equation}
\begin{equation}\label{eqalg3}
 0\to H^0(X,F_1^*\otimes E_1)\to H^0(X,E_2^* \otimes E_1 )\to H^0(X, E_{1}^*\otimes  E_1)\stackrel{\delta _2}{\to} H^1(X,F_1^*\otimes E_1)
\end{equation}
that are part of the cohomology sequences of $H^*((\rho _1 )\otimes E_2^*)$, $H^*((\rho _1 )^*\otimes F_1)$ and of $H^*((\rho _1 )^*\otimes E_1)$ , respectively.

From (\ref{eqalg1})
\begin{equation}\label{alg2}
\dim End(E_2)\leq h^0(E_1\otimes E_2^*) + h^0( F_1\otimes E_2^*).
\end{equation}
The semistability of the $F_1$ and $E_1$ and the inequalities (\ref{desigualdadmu}) imply that $H^0(X, E_{1}^*\otimes  F_1)=0$, hence from (\ref{eqalg2})
\begin{equation}\label{eqigualdad} H^0(X, F_1\otimes E_2^*)= H^0(X,F_1^*\otimes F_1)=End(F_1).
\end{equation}
From (\ref{eqalg3})
$$h^0( E_1 \otimes E_2^* )\leq  h^0(F_1^*\otimes E_1) + h^0( E_{1}^*\otimes  E_1) $$
since the extension $\delta_2(Id)$ is the element classifying the extension $(\rho_1)^*$ and $\rho ^*_1$ is non-trivial.
Therefore, since $E_1$ and $F_1$ are indecomposables
$$\begin{array}{ccl} \label{alg2f}
\dim End(E_2)&\leq &h^0(E_1\otimes E_2^*) + h^0( F_1\otimes E_2^*)\\
&=& h^0(E_1\otimes E_2^*) + \dim End(F_1)\\
&\leq & h^0(F_1^*\otimes E_1) + \dim End(E_1) + \dim End(F_1)\\
&\leq &   h^0(F_1^*\otimes E_1) +\frac{n_1(n_1-1)}{2} +1+\frac{n_2(n_2-1)}{2}+1 \ \ \ \  (\mbox{see} \ \ (\ref{dimalgs})) \\
& \leq& h^0(F_1^*\otimes E_1)+ 1+\frac{(n_1^2+n_2^2)}{2} -\frac{n_1+n_2}{2} +1\\
& \leq& h^0(F_1^*\otimes E_1)+ 1+\frac{n(n-1)}{2} +1 -n_1n_2\\
&\leq &  h^0(F_1^*\otimes E_1)+ 1+\frac{n(n-1)}{2}
\end{array}
$$
as claimed.
\end{proof}

\begin{corollary}\label{cor2} Let $E_2$ be a indecomposable bundle of rank $n$ of HN-length $2$.
If $E_1$ and $F_1$ are simple then $\dim End(E_2)= 1+ h^0(F_1^*\otimes E_1)$ and $$End(E_2)\cong \mathbb{C}[x_1,\dots , x_k]/(x_1,\dots , x_k)^2,$$
where $k=h^0(F_1^*\otimes E_1)$. Moreover, $E_2$ is simple if and only if $h^0(F_1^*\otimes E_1)=0$.
\end{corollary}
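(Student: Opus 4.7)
The plan is to revisit the inequality chain in the proof of Proposition \ref{propalg}, and observe that when $E_1$ and $F_1$ are simple, every bound becomes an equality. First I would invoke Lemma \ref{lema01}(2), which gives $H^0(E_2^*\otimes F_1)=H^0(F_1^*\otimes F_1)=\mathbb{C}\cdot p$, so this space is $1$-dimensional and generated by the projection $p:E_2\to F_1$. Next, in the sequence (\ref{eqalg3})
\[
0\to H^0(F_1^*\otimes E_1)\to H^0(E_2^*\otimes E_1)\to H^0(E_1^*\otimes E_1)\stackrel{\delta_2}{\to}H^1(F_1^*\otimes E_1),
\]
the simplicity of $E_1$ gives $H^0(E_1^*\otimes E_1)=\mathbb{C}\cdot Id_{E_1}$, and the standard identification of the connecting morphism sends $Id_{E_1}$ to (a nonzero multiple of) the class of $\rho_1^*$ in $\operatorname{Ext}^1(E_1^*,F_1^*)\cong H^1(F_1^*\otimes E_1)$. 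Since $\rho_1\neq 0$, the map $\delta_2$ is injective, so $h^0(E_2^*\otimes E_1)=h^0(F_1^*\otimes E_1)=k$. Finally, in (\ref{eqalg1}) the map $H^0(E_2^*\otimes E_2)\to H^0(E_2^*\otimes F_1)$ sends $Id_{E_2}\mapsto p$, which generates the target, so the map is surjective and
\[
\dim End(E_2)=h^0(E_2^*\otimes E_1)+h^0(E_2^*\otimes F_1)=k+1.
\]

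Next I would identify the algebra structure. Since $H^0(E_1^*\otimes F_1)=0$ by the slope inequality and semistability, any $\phi\in End(E_2)$ satisfies $p\circ\phi\circ\iota=0$, hence $\phi(E_1)\subseteq E_1$. Thus $\phi$ induces $\phi|_{E_1}=\lambda\cdot Id_{E_1}\in End(E_1)=\mathbb{C}$ and $\bar\phi=\nu\cdot Id_{F_1}\in End(F_1)=\mathbb{C}$. If $\phi$ is nilpotent then both scalars $\lambda,\nu$ must vanish, so $\phi(E_1)=0$ and $\phi(E_2)\subseteq E_1$, which immediately yields $\phi^2=0$. Using the Atiyah decomposition $End(E_2)=\mathbb{C}\cdot Id_{E_2}\oplus Nil(E_2)$ of (\ref{nil}), it follows that $Nil(E_2)^2=0$ and $\dim Nil(E_2)=k$, which is exactly the algebra structure of $A_k=\mathbb{C}[x_1,\dots,x_k]/(x_1,\dots,x_k)^2$.

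To be fully explicit about the isomorphism, I would observe that the map $\psi\mapsto \iota\circ\psi\circ p$ defines an injection $H^0(F_1^*\otimes E_1)\hookrightarrow Nil(E_2)$, whose image consists of nilpotents since $p\circ\iota=0$; and that every nilpotent $\phi$ is of this form because $\phi$ factors as $E_2\twoheadrightarrow F_1\to E_1\hookrightarrow E_2$ by the above. This gives $Nil(E_2)\cong H^0(F_1^*\otimes E_1)$ as vector spaces, and choosing a basis $\psi_1,\dots,\psi_k$ one obtains the desired presentation $End(E_2)\cong A_k$. The last assertion is immediate: $E_2$ is simple iff $End(E_2)=\mathbb{C}$ iff $k=h^0(F_1^*\otimes E_1)=0$.

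The only delicate step is the identification of $\delta_2(Id_{E_1})$ with the class of the dual extension $\rho_1^*$, but this is a standard property of Yoneda's connecting map; everything else is a straightforward tightening of the estimates already used in Proposition \ref{propalg}, combined with the observation that nilpotent endomorphisms of $E_2$ automatically shift filtration degree by one and hence square to zero.
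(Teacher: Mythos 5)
Your proposal is correct and follows essentially the same route as the paper: the same cohomology sequences (\ref{eqalg1}) and (\ref{eqalg3}), the key facts that $h^0(E_2^*\otimes F_1)=1$ and that $\delta_2(Id_{E_1})$ is the nonzero class of $\rho_1^*$, the Atiyah decomposition (\ref{nil}), and the map $\sigma\mapsto\iota\circ\sigma\circ p$ realizing $Nil(E_2)\cong H^0(F_1^*\otimes E_1)$ with vanishing products. The only (harmless) difference is that you prove surjectivity of this map structurally, by showing every nilpotent endomorphism kills $E_1$ and factors through $E_2\twoheadrightarrow F_1\to E_1\hookrightarrow E_2$, where the paper simply matches dimensions.
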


\begin{proof}
 The vector bundle $E_2$ is indecomposable of simple type, hence  Lemma \ref{dim1}, (2) shows that $h^0(F_1\otimes E_2^*)=1$ and from (\ref{eqalg1}) we conclude that $\delta _0(\lambda p)= 0$. Thus,
$$\dim End(E_2)=h^0(E_1\otimes E_2^*) + h^0( F_1\otimes F_1^*)= h^0(E_1\otimes E_2^*) + 1.$$
It follows from (\ref{eqalg3}) that $\delta _2(Id_{E_1})\ne 0$, hence that  $h^0(F_1^*\otimes E_1)=h^0( E_1 \otimes  E_2^*)$, and finally that $$\dim End(E_2)= 1+ h^0(F_1^*\otimes E_1).$$

From (\ref{nil}) we deduce that $\dim Nil(E)=h^0(F_1^*\otimes E_1)$.
The map $$\varphi :H^0(F_1^*\otimes E_1) \to Nil(E)$$ defined as $\sigma \mapsto \iota \circ\sigma \circ p$ is a well defined injective homomorphism. Hence, $H^0(F_1^*\otimes E_1)\cong Nil(E)$. Moreover, $(\varphi (\sigma ))^2=0$ and
$$(\varphi (\sigma _1 ))\circ (\varphi (\sigma _2))=(\varphi (\sigma _2))\circ(\varphi (\sigma _1))=0.$$
We thus get $End(E)\cong \mathbb{C}[x_1,\dots , x_k]/(x_1,\dots , x_k)^2$ where $k=h^0(F_1^*\otimes E_1)$. Moreover, $E_2$ is simple if and only if  $h^0(F_1^*\otimes E_1)=0$  which is our claim.

\end{proof}

\begin{remark}\begin{em}\label{decomposable} Note that if $F_1^*\otimes E_1$ is special $h^0(F_1^*\otimes E_1)\cdot h^1(F_1^*\otimes E_1)\ne 0$ and if  $0\ne \rho:0\to E_1\to E_2\to F_1\to 0\in H^1(X,F_1^*\otimes E_1)$, $E_2$ is indecomposable and $\dim End(E_2) >1$. Moreover, if  $F_1^*\otimes E_1$ is no special. $h^0(F_1^*\otimes E_1)\cdot h^1(F_1^*\otimes E_1)= 0$, consequently, either $E_2$ is decomposable or is simple.
\end{em}\end{remark}

The next propositions is a fairly straightforward generalization for unstable bundles of HN-length $m>2$.

\begin{proposition}\label{dimensionr} Let $E$ be an indecomposable unstable vector bundle of rank $n$. If the HN-filtration $ 0 = E_0\subset E_1 \subset \cdots \subset E_m = E$ is HN-indecomposable
then
\begin{equation}\label{dimalgr}
\dim End(E)\leq 1+\frac{n(n-1)}{2} + \sum ^{m-1} h^0(F_i^*\otimes E_{i})
\end{equation}
\end{proposition}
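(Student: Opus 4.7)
The natural strategy is induction on the HN-length $m$. The base case $m=2$ is precisely Proposition \ref{propalg}. For the inductive step on an HN-indecomposable bundle $E=E_m$ of length $m>2$, I would apply the same three-cohomology-sequence argument as in Proposition \ref{propalg}, but now to the top HN-extension
$$\rho_{m-1}:\ 0\to E_{m-1}\to E_m\to F_{m-1}\to 0.$$
By Remark \ref{remigual}(3), the truncation $0\subset E_1\subset\cdots\subset E_{m-1}$ is the HN-filtration of $E_{m-1}$ and is HN-indecomposable of length $m-1$, so the inductive hypothesis gives
$$\dim \End(E_{m-1})\ \leq\ 1+\frac{n_1(n_1-1)}{2}+\sum_{i=1}^{m-2}h^0(F_i^*\otimes E_i),$$
where I write $n_1:=\rk(E_{m-1})$ and $n_2:=\rk(F_{m-1})$, so $n=n_1+n_2$.

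Now apply $H^*(\rho_{m-1}\otimes E_m^*)$, $H^*(\rho_{m-1}^*\otimes F_{m-1})$ and $H^*(\rho_{m-1}^*\otimes E_{m-1})$, exactly analogous to (\ref{eqalg1})--(\ref{eqalg3}). The first sequence yields
$$\dim \End(E_m)\ \leq\ h^0(E_{m-1}\otimes E_m^*)+h^0(F_{m-1}\otimes E_m^*).$$
For the second term, the slope inequalities (\ref{desigualdadmu}) give $\mu_{\min}(E_{m-1})=\mu(F_{m-2})>\mu(F_{m-1})=\mu_{\max}(F_{m-1})$, hence $H^0(E_{m-1}^*\otimes F_{m-1})=0$ (Lemma 1.3.3 of \cite{danle}), and the second sequence forces $h^0(F_{m-1}\otimes E_m^*)=\dim \End(F_{m-1})$. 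The third sequence gives
$$h^0(E_{m-1}\otimes E_m^*)\ \leq\ h^0(F_{m-1}^*\otimes E_{m-1})+\dim \End(E_{m-1}).$$

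Combining these bounds with the inductive hypothesis and the semistable bound (\ref{dimalgs}) applied to the indecomposable semistable quotient $F_{m-1}$,
$$\dim \End(E_m)\ \leq\ \sum_{i=1}^{m-1}h^0(F_i^*\otimes E_i)+\Bigl(1+\tfrac{n_1(n_1-1)}{2}\Bigr)+\Bigl(1+\tfrac{n_2(n_2-1)}{2}\Bigr).$$
The arithmetic identity $\tfrac{n(n-1)}{2}-\tfrac{n_1(n_1-1)}{2}-\tfrac{n_2(n_2-1)}{2}=n_1n_2\geq 1$ absorbs the extra $+1$, yielding the desired bound $1+\tfrac{n(n-1)}{2}+\sum_{i=1}^{m-1}h^0(F_i^*\otimes E_i)$. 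The only point requiring a touch of care is the vanishing $H^0(E_{m-1}^*\otimes F_{m-1})=0$; this is immediate from the HN slope chain and the semistability of $F_{m-1}$, so I do not anticipate any genuine obstacle — the argument is a mechanical iteration of the $m=2$ case, closed by the same quadratic identity in the ranks.
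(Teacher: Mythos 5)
Your proposal is correct and follows essentially the same route as the paper: induction on the HN-length with Proposition \ref{propalg} as the base case, the three cohomology sequences of $\rho_{m-1}$ twisted by $E_m^*$, $F_{m-1}$ and $E_{m-1}$, the vanishing $H^0(E_{m-1}^*\otimes F_{m-1})=0$ (which the paper obtains from Lemma \ref{lemfin}(2) and you obtain equivalently from the slope chain), and the closing identity $\frac{n(n-1)}{2}-\frac{n_1(n_1-1)}{2}-\frac{n_2(n_2-1)}{2}=n_1n_2\geq 1$. No gaps.
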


For the proof we will use the following lemma.

\begin{lemma}\label{lema1}\label{lemfin} For all $0<i< j\leq m $
\begin{enumerate}
\item $H^0(X,F_i^*\otimes F_j)=0$ and
\item  for all $0<i\leq  j\leq m $, $H^0(X, E_i^*\otimes F_j)=0.$
\end{enumerate}
\end{lemma}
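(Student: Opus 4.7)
The plan is to prove (1) directly from semistability and the strict slope inequalities of the HN-filtration, and then to derive (2) from (1) by induction on $i$ using the HN$(i)$-extensions $\rho_i : 0\to E_{i-1} \to E_i \to F_{i-1} \to 0$.

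For (1), I would invoke the standard fact that no nonzero morphism can go from a semistable bundle to a semistable bundle of strictly smaller slope. Concretely, if $\varphi : F_i \to F_j$ were nonzero, semistability of $F_i$ applied to $\Ker\varphi$ would give $\mu(\Image\varphi) \geq \mu(F_i)$, while semistability of $F_j$ applied to the saturation of $\Image\varphi$ would give $\mu(\Image\varphi) \leq \mu(F_j)$, contradicting $\mu(F_i) > \mu(F_j)$ coming from (\ref{desigualdadmu}). Hence $H^0(F_i^*\otimes F_j) = \Hom(F_i, F_j) = 0$.

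For (2), I induct on $i$. The base case $i=1$ reduces to (1), since $E_1 = F_0$ and we are in the range $j \geq 1$. For the inductive step, applying $\Hom(-,F_j)$ to $\rho_i$ yields the left exact sequence
\begin{equation*}
0\to \Hom(F_{i-1}, F_j) \to \Hom(E_i, F_j) \to \Hom(E_{i-1}, F_j).
\end{equation*}
Because $j \geq i$ one has $i-1 < j$, so the leftmost term vanishes by part (1); and because $j \geq i > i-1$, the rightmost term vanishes by the induction hypothesis applied to $E_{i-1}$. Therefore $\Hom(E_i, F_j) = H^0(E_i^*\otimes F_j) = 0$.

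The argument is essentially mechanical once the strict slope inequalities are tracked; the key point is simply that $j \geq i$ gives $i - 1 < j$, which is precisely what is needed to invoke (1) inside the inductive step. There is no genuine obstacle, since both statements are direct consequences of the defining properties of the HN-filtration together with the basic slope/semistability dictionary.
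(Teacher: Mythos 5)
Your proof is correct and takes essentially the same route as the paper's: part (1) is the standard slope/semistability vanishing, and part (2) is obtained by applying $\Hom(-,F_j)$ to the HN$(i)$-extensions $\rho_i$ and inducting on $i$, which is exactly the paper's argument (done explicitly for $i=2$ there, with the rest left to the reader). The only difference is that you spell out the induction and the kernel/image slope argument that the paper merely asserts.
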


\begin{proof} $(1)$ follows from the semistability of the quotients $F_i$'s and the inequalities (\ref{desigualdadmu}).

$(2)$ The semistability of $E_1$ and  $F_1$  and the inequalities (\ref{desigualdadmu}) imply that
\[
H^0(X,F_1^*\otimes F_j)=H^0(X,E_1^*\otimes F_j)=0,
\]
Hence, from the cohomology sequence
$$H^*((\rho _1)^*\otimes F_j): 0\to H^0(X,F_1^*\otimes F_j) \to H^0(X,E_2^*\otimes F_j) \to H^0(X,E_1^*\otimes F_j) \to \cdots $$
 we conclude that $H^0(X,E_2^*\otimes F_j)=0$.
In the same manner we can see that $H^0(X,E_i^*\otimes F_j)=0$ for all $i\leq j\leq m.$  The detailed verification being left to the reader.
\end{proof}

{\it Proof of Proposition \ref{dimensionr}}
The proof is by induction on the HN-length. The first step of induction is Proposition \ref{propalg}. From the cohomology sequence $H^*(\rho _{m-1} \otimes E_m^*)$
\begin{equation}\label{algm}
\dim End(E_m)\leq h^0(E_{m-1}\otimes E_m^*) + h^0( F_{m-1}\otimes E_m^*).
\end{equation}
We can now proceed analogously to the proof of Proposition \ref{propalg}. From the cohomology sequence $H^*((\rho _{m-1})^*\otimes F_{m-1})$ and Lemma \ref{lema1},(2) it follows that
\[
H^0( F_{m-1}\otimes E_m^*)=H^0(F_{m-1}^*\otimes F_{m-1}).
\]
 From the cohomology sequence $H^*((\rho _{m-1})^*\otimes E_{m-1})$
we have the inequality
\[
h^0(E_{m-1}\otimes E_m^*)\leq h^0(End(E_{m-1}))+ h^0(F_{m-1}^*\otimes E_{m-1}).
\]
 Recall that $E_m$ is HN-indecomposable.

Now (\ref{algm} ) becomes
$$
\begin{array}{cll}
\dim End(E_m) &\leq & h^0(End(E_{m-1}))+ h^0(F_{m-1}^*\otimes E_{m-1}) + h^0(F_{m-1}^*\otimes F_{m-1})\\
&&\\
&\leq &\frac{rk(E_{m-1})(rk(E_{m-1})-1)}{2}  +\frac{rk(F_{m-1})(rk(F_{m-1})-1)}{2} +2+\\
&& \sum ^{m-2} h^0(F_i^*\otimes E_{i}) + h^0(F_{m-1}^*\otimes E_{m-1})\\
&&\\
&\leq & 1+\frac{rk(E_{m})(rk(E_{m})-1)}{2} + \sum ^{m-1} h^0(F_i^*\otimes E_{i}),
\end{array}
$$
which is the desired conclusion.
$\hfill{\Box}$

The following result may be proved in much the same way as Corollary \ref{cor2}.

\begin{corollary}  Let $E$ be an indecomposable unstable vector bundle of rank $n$ of simple type. If $ 0 = E_0\subset E_1 \subset \cdots \subset E_m = E$ is its  HN-filtration
then
\begin{equation}\label{coro3}
\dim End(E)= 1+  h^0(F_{m-1}^*\otimes E_{m-1}).
\end{equation}
Moreover, $h^0(F_{m-1}^*\otimes E_{m-1})\leq \sum _0 ^{m-2} h^0(F_{m-1}^*\otimes F_{i}).$
\end{corollary}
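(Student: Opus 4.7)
The plan is to mimic the argument of Corollary \ref{cor2}, applied to the top HN-extension
$$\rho_{m-1} : 0 \to E_{m-1} \stackrel{\iota}{\to} E_m \stackrel{p}{\to} F_{m-1} \to 0,$$
which is exactly the case $m=2$ of the present statement. Since $E$ is of simple type, both $E_{m-1}$ and $F_{m-1}$ are simple; and indecomposability of $E_m$ forces $\rho_{m-1} \ne 0$, for otherwise $E_m$ would split as $E_{m-1} \oplus F_{m-1}$. This puts us in the setting of Corollary \ref{cor2}, with the caveat that $E_{m-1}$ is not semistable when $m \geq 3$. The vanishings that Lemma \ref{dim1} provided in the $m=2$ case are instead supplied by Lemma \ref{lema1}(2), which is valid for HN-filtrations of arbitrary length.

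First I would show $h^0(E_m^* \otimes F_{m-1}) = 1$: the cohomology sequence of $(\rho_{m-1})^* \otimes F_{m-1}$ reads
$$0 \to H^0(F_{m-1}^* \otimes F_{m-1}) \to H^0(E_m^* \otimes F_{m-1}) \to H^0(E_{m-1}^* \otimes F_{m-1}) \to \cdots,$$
whose right-hand term vanishes by Lemma \ref{lema1}(2), while the left is $\mathbb{C}$ by simplicity of $F_{m-1}$. Thus $H^0(E_m^* \otimes F_{m-1}) = \mathbb{C}\cdot p$. Feeding this into the sequence $H^*(\rho_{m-1} \otimes E_m^*)$,
$$0 \to H^0(E_{m-1} \otimes E_m^*) \to \End(E_m) \to H^0(F_{m-1} \otimes E_m^*) \to \cdots,$$
and observing that $Id_{E_m}$ maps onto $p$, the second arrow is surjective; so $\dim \End(E_m) = h^0(E_{m-1} \otimes E_m^*) + 1$.

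Next I would analyze $h^0(E_m^* \otimes E_{m-1})$ via the cohomology sequence of $(\rho_{m-1})^* \otimes E_{m-1}$:
$$0 \to H^0(F_{m-1}^* \otimes E_{m-1}) \to H^0(E_m^* \otimes E_{m-1}) \to \End(E_{m-1}) \stackrel{\delta_2}{\to} H^1(F_{m-1}^* \otimes E_{m-1}).$$
Simplicity of $E_{m-1}$ reduces $\End(E_{m-1})$ to the line $\mathbb{C}\cdot Id$, and $\delta_2(Id_{E_{m-1}}) = 0$ would mean $Id_{E_{m-1}}$ lifts to a retraction $E_m \to E_{m-1}$ of $\iota$, forcing $\rho_{m-1}$ to split---contradiction. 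Hence $\delta_2$ is injective, so $h^0(E_m^* \otimes E_{m-1}) = h^0(F_{m-1}^* \otimes E_{m-1})$, and combining with the previous paragraph yields
$$\dim \End(E) = 1 + h^0(F_{m-1}^* \otimes E_{m-1}).$$

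For the trailing inequality I would induct on the HN-length, using at each stage the cohomology sequence of $F_{m-1}^* \otimes \rho_i$,
$$0 \to H^0(F_{m-1}^* \otimes E_i) \to H^0(F_{m-1}^* \otimes E_{i+1}) \to H^0(F_{m-1}^* \otimes F_i) \to \cdots ,$$
which gives $h^0(F_{m-1}^* \otimes E_{i+1}) \leq h^0(F_{m-1}^* \otimes E_i) + h^0(F_{m-1}^* \otimes F_i)$. Starting from $E_1 = F_0$ and telescoping up to $i = m-2$ delivers $h^0(F_{m-1}^* \otimes E_{m-1}) \leq \sum_{i=0}^{m-2} h^0(F_{m-1}^* \otimes F_i)$. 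The only genuinely nontrivial step in the whole argument is the connecting-map identification used to see that $\delta_2(Id) \neq 0$, but that is the classical splitting criterion already invoked in Corollary \ref{cor2}.
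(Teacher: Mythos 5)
Your proposal is correct and follows essentially the same route as the paper: the paper's own (sketched) proof invokes precisely the three cohomology sequences $\rho_{m-1}\otimes E^*$, $(\rho_{m-1})^*\otimes F_{m-1}$ and $(\rho_{m-1})^*\otimes E_{m-1}$ together with Lemma \ref{lemfin}, and obtains the trailing inequality by taking cohomology of $F_{m-1}^*\otimes\rho_i$ and inducting, exactly as you do. You have simply filled in the details the paper leaves to the reader, including the key observation that $\delta_2(Id_{E_{m-1}})\neq 0$ by the splitting criterion.
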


\begin{proof} We give only the main ideas of the proof.  We can proceed analogously to the proof of Corollary \ref{cor2}. The equality  $\dim End(E)= 1+  h^0(F_{m-1}^*\otimes E_{m-1})$  follows from Lemma \ref{lemfin} and the cohomology of the exact sequences
$$0\to E_{m-1}\otimes E^*\to E\otimes E^* \to F_{m-1}\otimes E^* \to 0,$$
$$0\to F_{m-1}^*\otimes F_{m-1}\to E^*\otimes F_{m-1} \to E_{m-1}^*\otimes F_{m-1} \to 0,$$
$$0\to F_{m-1}^*\otimes E_{m-1}\to E^*\otimes E_{m-1} \to E_{m-1}^*\otimes E_{m-1} \to 0,$$
since $E$ is of simple type, and hence HN-indecomposable.

The basic idea of the proof of the bound for $h^0(F_{m-1}^*\otimes E_{m-1})$  is to take the cohomology of the sequences
$$0\to E_{i-1}\otimes F_{m-1}^*\to E_i\otimes F_{m-1}^* \to F_{i-1}\otimes F_{m-1}^* \to 0,$$
and proceed by induction.
The details are left to the reader.
\end{proof}

\section{Moduli spaces} \label{moduli}

In this section Proposition \ref{propfm} is given in a more general setting. Our aim is to consider families of semistable bundles. Recall that if the vector bundles $E_1$ and $F_1$ have automorphisms the $\mathbb{C}^*$-action does not identify all the isomorphic classes of the vector bundles in $H^1(X,E_1\otimes F_1^*)$.  The simple vector bundles over X with fixed rank and degree possess a
coarse moduli space (see \cite[Corollary 6.5]{KO}), but there is no universal family. It is possibly that it is non-separated and by the work of M. Artin is an algebraic space. Therefore, to construct a moduli scheme of indecomposable vector bundles of HN-length $2$ we will consider those of coprime type $\sigma=(\mu _1, \mu _2)$. Note that if  $\mu _1$ is given by $\mu _1=\frac{d_1}{n_1}$ then $\mu _2=\frac{d-d_1}{n-n_1}:=\frac{d_2}{n_2}.$  From now on we tacitly assume that $\mu _1=\frac{d_1}{n_1} >\frac{d_2}{n_2}=\mu _2 $ and denote by $U_{{\mu _1}}(n,d)$ the set of such bundles. Let
$$U_{{\mu _1}}(n,d,k):=\{E\in U_{{\mu _1}}(n,d): \dim End(E)=1+k\}.$$

The moduli space $M(n_i,d_i)$ of semistable vector bundles of rank $n_i$ and degree $d_i$ for $1=1,2$ with $gcd(n_i,d_i)=1$
carries a universal bundle
$\mathcal{U}_i \to X\times {M}(n_i,d_i)$. In general, $\mathcal{U}_i$ is determined up to tensoring by a line bundle lifted from ${M}(n_i,d_i)$. In this paper it will be fixed, unless
otherwise stated.
Denote by $p_{ij}$ the projection of $X\times {M}(n_1,d_1) \times {M}(n_2,d_2)$ in the $ij$-factors. We will denote by ${\mathcal{R}}_{{\mu _1}}$ the $1$st- direct image sheaf
$${\mathcal{R}}_{{\mu _1}}:=\mathcal{R}^1_{{p_{23}}}(p_{12}^*{\mathcal{U}_{{1}}}\otimes p_{13}^*{\mathcal{U}^*_{{2}}})$$ over ${M}(n_1,d_1) \times {M}(n_2,d_2)$.

With the notation $M_i:=M(n_i,d_i)$, the following diagram summarise the notation.

\begin{equation}\label{diag1}
\xymatrix@1{ & p_{12}^*\mathcal{U}_{{1}}\otimes p_{13}^*\mathcal{U}^*_{{2}}\ar[d] &
\mathcal{R}_{{\mu _1}}:=\mathcal{R}^1_{{p_{23}}}(p_{12}^*\mathcal{U}_{{1}}\otimes
p_{13}^*\mathcal{U}^*_{{2}})\ar[d] \\
\mathcal{U}_{{1}}\ar[d] & X\times M_1 \times M_2\ar[dl]^{p_{12}}\ar[r]^{p_{23}}\ar[dr]^{p_{13}} &M_1 \times M_2\\
X\times M_1& & X\times M_2.\\}
\end{equation}

\bigskip

The following theorem follows from \cite{grot}, \cite{lange}, Proposition \ref{indecom} and Lemma \ref{lema1}.

\begin{theorem}\label{teo1} If $\mu _1-\frac{d_2}{n_2}\geq 2g-1$, $U_{{\mu _1}}(n,d)=\emptyset$ and if  $\mu _1-\frac{d_2}{n_2}< 2g-1$, $U_{{\mu _1}}(n,d)$  has a projective scheme structure
that makes it an moduli scheme. Moreover, there is a natural isomorphism between $U_{{\mu _1}}(n,d)$ and $Proj(\mathcal{R}_{{\mu _1}})$.
\end{theorem}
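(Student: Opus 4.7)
The plan has two parts, corresponding to the two halves of the statement.

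\textbf{Emptiness when $\mu_1-\mu_2\geq 2g-1$.} If $E\in U_{\mu_1}(n,d)$, the HN-filtration gives a non-trivial extension $\rho_1:0\to E_1\to E\to F_1\to 0$ with $E_1,F_1$ semistable and $\mu(E_1)-\mu(F_1)=\mu_1-\mu_2$. Lemma \ref{dim1} forces $\rho_1=0$ as soon as $\mu_1-\mu_2>2g-2$, so under the hypothesis $\mu_1-\mu_2\geq 2g-1$ any such extension splits, contradicting indecomposability; hence $U_{\mu_1}(n,d)=\emptyset$.

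\textbf{Construction of the scheme structure.} Under the coprime hypothesis, the moduli spaces $M_i=M(n_i,d_i)$ are smooth projective fine moduli of stable bundles, carrying universal sheaves $\mathcal{U}_i$. First I would form, on $X\times M_1\times M_2$, the sheaf $\mathcal{H}om\bigl(p_{13}^*\mathcal{U}_2,p_{12}^*\mathcal{U}_1\bigr)=p_{12}^*\mathcal{U}_1\otimes p_{13}^*\mathcal{U}_2^*$, and push it down by $p_{23}$ to obtain the coherent sheaf $\mathcal{R}_{\mu_1}$ on $M_1\times M_2$, whose fibre at $(E_1,F_1)$ is $\mathrm{Ext}^1(F_1,E_1)$. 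Following Lange's theory of families of extensions (and Grothendieck's cohomology-and-base-change formalism applied to $p_{23}$), the symmetric algebra $\mathrm{Sym}(\mathcal{R}_{\mu_1}^\vee)$ is a graded coherent $\mathcal{O}_{M_1\times M_2}$-algebra whose $\mathrm{Proj}$ over $M_1\times M_2$ carries a tautological invertible quotient $\mathbb{H}$ and a universal extension
\[
0\to q^*\mathcal{U}_1\otimes \pi^*\mathbb{H}\to \mathcal{E}\to q^*\mathcal{U}_2\to 0
\]
on $X\times\mathrm{Proj}(\mathcal{R}_{\mu_1})$, where $\pi$ and $q$ are the obvious projections. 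Projectivity of $\mathrm{Proj}(\mathcal{R}_{\mu_1})\to M_1\times M_2$ and of the base yields projectivity of the total space.

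\textbf{Bijection with $U_{\mu_1}(n,d)$.} Each closed point of $\mathrm{Proj}(\mathcal{R}_{\mu_1})$ is a triple $(E_1,F_1,[\rho_1])$ with $[\rho_1]\in\mathbb{P}(\mathrm{Ext}^1(F_1,E_1))$. By Lemma \ref{indecom}, the associated $E$ is indecomposable, and the inequality $\mu_1>\mu_2$ with semistability of $E_1,F_1$ shows $0\subset E_1\subset E$ is the HN-filtration, so $E\in U_{\mu_1}(n,d)$. Conversely, given $E\in U_{\mu_1}(n,d)$, its HN-filtration determines $E_1\in M_1$, $F_1=E/E_1\in M_2$ and a non-zero class $[\rho_1]$. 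Stability of $E_1$ and $F_1$ makes them simple, and Remark \ref{rem1} tells us that $E\cong E'$ in $U_{\mu_1}(n,d)$ iff the defining classes differ by a scalar; hence the map $\mathrm{Proj}(\mathcal{R}_{\mu_1})\to U_{\mu_1}(n,d)$ is a bijection on closed points.

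\textbf{Moduli property.} Given a flat family $\mathcal{F}\to X\times S$ of members of $U_{\mu_1}(n,d)$, I would produce the relative HN-filtration: the maximal destabilizing subsheaf exists in families (Shatz, Maruyama) and under the coprime hypothesis it is locally free with fibres in $M_1$, giving classifying morphisms $f_1\colon S\to M_1$ and $f_2\colon S\to M_2$. Pulling back the universal extension along $(f_1,f_2)$ and comparing with $\mathcal{F}$ produces a line bundle quotient of $(f_1,f_2)^*\mathcal{R}_{\mu_1}^\vee$, equivalently a morphism $S\to\mathrm{Proj}(\mathcal{R}_{\mu_1})$ inducing $\mathcal{F}$ up to pullback of a line bundle from $S$. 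This verifies the coarse moduli property.

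\textbf{Main obstacle.} The delicate point is the construction of the universal extension on $\mathrm{Proj}(\mathcal{R}_{\mu_1})$ when $h^1(F_1^*\otimes E_1)$ jumps: cohomology and base change is not automatic, so one must either stratify $M_1\times M_2$ by the rank of $\mathcal{R}_{\mu_1}$ or invoke Lange's refinement giving a universal extension on $\mathrm{Proj}$ of the symmetric algebra despite non-local-freeness, and verify that the tautological line bundle still classifies extension classes fibrewise. Handling the line-bundle ambiguity in $\mathcal{U}_1,\mathcal{U}_2$ (so that the resulting $\mathrm{Proj}$ is canonical) is the secondary technical checkpoint.
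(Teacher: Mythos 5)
Your proposal is correct and follows essentially the same route as the paper: the emptiness half via the vanishing of $H^1(F_1^*\otimes E_1)$ from Lemma \ref{dim1}, and the scheme structure via the coprimality reduction to extensions of stable bundles, the sheaf $\mathcal{R}_{\mu_1}$ with Lange's universal extensions, Lemma \ref{indecom} for indecomposability, and Remark \ref{rem1} for the identification of isomorphism classes with scalar classes. The paper's proof is terser (it delegates the base-change and moduli-property details to \cite{grot} and \cite{lange}), so your explicit treatment of the relative HN-filtration and of the non-local-freeness of $\mathcal{R}_{\mu_1}$ is a faithful expansion rather than a different argument.
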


\begin{proof}
Let $E\in U_{{\mu _1}}(n,d)$. Note that being of coprime type implies that $E$ is an extension of two stable bundles, that is
$$\rho _1: 0\to E_1 \to E \to F_1 \to 0.$$
Since  ${\it Ext}^2(p_{13}^*{\mathcal{U}_{{2}}}, p_{12}^*{\mathcal{U}_{{1}}})=0$, it follows that $\mathcal{R}^1_{{p_{23}}}(p_{12}^*{\mathcal{U}_{{1}}}\otimes p_{13}^*{\mathcal{U}^*_{{2}}})_{{|_{(E_1,F_1)}}} \to
H^1(X, E_1\otimes F_1^*)$ is an isomorphism.  From \cite{grot}, \cite{lange}, the coherent sheaf ${\mathcal{R}}_{{\mu _1}}:=\mathcal{R}^1_{{p_{23}}}(p_{12}^*{\mathcal{U}_{{1}}}\otimes p_{13}^*{\mathcal{U}^*_{{2}}})$ parameterizes the classes of extensions of two stable bundles $(E_1,F_1)\in M_1\times M_2$.   From Lemma \ref{decomposable},$(1)$ $E$ is indecomposable if and only if $\rho _2\ne 0$. Therefore, the theorem follows from Remark \ref{rem1}.
\end{proof}

\begin{remark}\begin{em} Note that the coherent sheaf ${\mathcal{R}}_{{\mu _1}}$ could be defined even if $\gcd(n_0,d_0)\ne1$. Indeed, if $\gcd(n_i,d_i)\ne1$, there exists an \'etale covering $\widetilde{M_i}$ of $M_i$ such that a universal bundle $\widetilde{\mathcal{U}_i}$ exists  on  $X\times \widetilde{M_i}$ (see \cite[Proposition 2.4]{nr}).
Thus, the coherent sheaf $\widetilde{{\mathcal{R}}_{{\mu _1}}}:=\mathcal{R}^1_{{p_{23}}}(p_{12}^*{\widetilde{\mathcal{U}_{{1}}}}\otimes p_{13}^*{\widetilde{\mathcal{U}^*_{{2}}}})$ will parameterize extensions of two semistable bundles $(E_1,F_1)\in \widetilde{M_1}\times \widetilde{M_2}$. However, in this case we can not apply Lemma \ref{decomposable},$(1)$ (see Remark \ref{rem1}) since semistable bundles can be non-simple.
\end{em}
\end{remark}

In general, the coherent sheaf ${\mathcal{R}}_{{\mu _1}}=\mathcal{R}^1_{{p_{23}}}(p_{12}^*{\mathcal{U}_{{1}}}\otimes p_{13}^*{\mathcal{U}^*_{{2}}})$ is not locally free. We will give a flattening stratification of $M_1 \times M_2$ for ${\mathcal{R}}_{{\mu _1}}$. To give the schematic semi-continuity stratification of $M_1 \times M_2$ we will use the algebra of endomorphisms and the twisted Brill-Noether theory. For a recent account of the theory we refer the reader to \cite{hitching}. We follow \cite{arb} in the construction of the determinantal varieties that we need.

Let $D$ be an effective divisor on $X$ of degree $d_0>>0$ such that for any $(E_1,F_1)\in M_1 \times M_2$, $H^1(X,E_1\otimes F_1^*\otimes \mathcal{O}(D))=0.$ Let

$$0\to H^0(X,E_1\otimes F_1^*) \to H^0(X,E_1\otimes F_1^*\otimes \mathcal{O}(D)) \stackrel{\phi}{\to} H^0(X,(E_1\otimes F_1^*)_{{|_D}}) \to H^1(X,E_1\otimes F_1^*)\to 0$$
be the cohomology sequence of
$$0\to E_1\otimes F_1^* \to E_1\otimes F_1^*\otimes \mathcal{O}(D) \to (E_1\otimes F_1^*)_{{|_D}} \to 0.$$
Thus, $h^0(E_1\otimes F_1^*) \geq k$ if and only if $rk(\phi) < h^0(E_1\otimes F_1^*\otimes \mathcal{O}(D))-k$.

We want to vary $(E_1, F_1)$ in $ M_1\times M_2.$
Let $\Gamma :=D\times M_1 \times M_2$ be the product divisor in $X\times M_1 \times M_2$. We have the following sequence
$$ 0\to p_{12}^*{\mathcal{U}_{{1}}}\otimes p_{13}^*{\mathcal{U}^*_{{2}}}\to p_{12}^*{\mathcal{U}_{{1}}}\otimes p_{13}^*{\mathcal{U}^*_{{2}}}\otimes \mathcal{O}(\Gamma)\to \mathcal{N} \to 0,   $$
over $X\times M_1 \times M_2$, where $\mathcal{N}$ is the quotient $p_{12}^*{\mathcal{U}_{{1}}}\otimes p_{13}^*{\mathcal{U}^*_{{2}}}\otimes \mathcal{O}(\Gamma)/ p_{12}^*{\mathcal{U}_{{1}}}\otimes p_{13}^*{\mathcal{U}^*_{{2}}}.$

 Since $H^1(E_1\otimes F_1^*\otimes \mathcal{O}(D))=0$, the direct image induces the complex $\phi :\mathcal{K}_0\to \mathcal{K}_1$ of locally free sheaves over  $M_1 \times M_2$ where $\mathcal{K}^0:=\mathcal{R}^0_{{p_{23}}}(p_{12}^*\mathcal{U}_{{1}}\otimes p_{13}^*\mathcal{U}^*_{{2}}\otimes \mathcal{O}(\Gamma))$ is of rank $d(E_1\otimes F_1^*\otimes \mathcal{O}(D) ) +n_1n_2(1-g) $ and $\mathcal{K}^1:=\mathcal{R}^0_{{p_{23}}}(\mathcal{N})$.  We follow the notation of \cite{hitching} and denote by $B^{k}(\mathcal{U}_1,\mathcal{U}^*_2)$ the $k$th-determinantal variety of the complex $\phi :\mathcal{K}_0\to \mathcal{K}_1$.  That is,
 \begin{equation}\label{support}
 Supp(B^{k}(\mathcal{U}_1,\mathcal{U}^*_2)):=\{(E_1,F_1)\in M_1 \times M_2: h^0( E_1\otimes F_1^*)\geq k\}.
 \end{equation}

\begin{remark}\begin{em}\label{remnotation}When no confusion can arise and to simplify notation, from now on we use the following notation:
\begin{itemize}
\item $n=n_1+n_2$,
\item $d=d_1+d_2$,
\item $n_0=n_1n_2$,
\item $d_0=n_2d_1-n_1d_2$,
\item $h^0=k$ and
\item $h^1=k-d_0 +n_0(g-1)$.
\end{itemize}
\end{em}\end{remark}

Hence, as $k$th-determinantal variety, the {\it expected dimension} of $B^{k}(\mathcal{U}_1,\mathcal{U}^*_2)$ is the number
 $$
 \begin{array}{cll}
 \rho (g,n_1,d_1,n_2,d_2,k)&:=&\dim (M_1 \times M_2)-h^0\cdot h^1\\
 &=&(n_1^2+n_2^2)(g-1) +2-k(k-d_0 +n_0(g-1)).\\
\end{array}
$$
and $B^{k+1}(\mathcal{U}_1,\mathcal{U}^*_2)\subseteq  Sing B^{k}(\mathcal{U}_1,\mathcal{U}^*_2).$ Moreover, the filtration
$$M_1 \times M_2\supset B^{1}(\mathcal{U}_1,\mathcal{U}^*_2)\supset  \cdots  \cdots \supset B^{k}(\mathcal{U}_1,\mathcal{U}^*_2)\supset B^{k+1}(\mathcal{U}_1,\mathcal{U}^*_2) \supset \cdots \cdots $$
define a stratification by closed subsets.
We will denote by $\mathcal{Y}_k$ the stratum

$$\mathcal{Y}_k:=B^{k}(\mathcal{U}_1,\mathcal{U}^*_2)-B^{k+1}(\mathcal{U}_1,\mathcal{U}^*_2).$$

That is, $$\mathcal{Y}_k=\{(E_1,F_1)\in M_1 \times M_2: h^0(E_1\otimes F_1^*)=k\}. $$ For any $(E_1,F_1)\in \mathcal{Y}_k$,
$$\dim H^1(X,E_1\otimes F_1^*)=k-d(E_1\otimes F_1^*)+n_1n_2(g-1)=k-d_0+n_0(g-1).$$ Therefore, {\it the restriction of} ${\mathcal{R}}_{{\mu _1}}$ to $\mathcal{Y}_k$, denoted by ${\mathcal{R}}_{{\mu _1}}(\mathcal{Y}_k)\to \mathcal{Y}_k$, is locally free of rank $k-d_0 +n_0(g-1)$. Let
$$\mathbb{P}({\mathcal{R}}_{{\mu _1}}(\mathcal{Y}_k))\to \mathcal{Y}_k $$
be the projective bundle associated to ${\mathcal{R}}_{{\mu _1}}(\mathcal{Y}_k)\to \mathcal{Y}_k$.

{\it The expected dimension} of $\mathbb{P}({\mathcal{R}}_{{\mu _1}}(\mathcal{Y}_k))$ is
$$
\begin{array}{cll}
\beta(g,n_1,d_1,n_2,d_2,k)&:=&\dim (M_1 \times M_2)-h^0h^1 +h^1-1\\
&=&\dim (M_1 \times M_2)-h^1(h^0 -1) -1\\
&=&(n_1^2+n_2^2)(g-1) +1 -(k-1)(k-d_0 +n_0(g-1)).
\end{array}
$$

\begin{remark}\begin{em}\label{remy0} $\mathcal{Y}_0$ is the locus where $ h^0(E_1\otimes F_1^*)=0$, and hence, the indecomposable bundles $E\in H^1(X,E_1\otimes F_1^*)$  are simple (see Corollary \ref{cor2}).
Under the assumption that  $M_1 \times M_2\ne B^{1}(\mathcal{U}_1,\mathcal{U}^*_2)$,
$$\mathcal{Y}_0=M_1 \times M_2- B^{1}(\mathcal{U}_1,\mathcal{U}^*_2)$$
is an open set and, in consequence,
$$\dim \mathbb{P}({\mathcal{R}}_{{\mu _1}}(\mathcal{Y}_0))=\dim M_1+\dim M_2 +h^1-1.$$
\end{em}\end{remark}

\begin{remark}\begin{em} \begin{enumerate}
\item If $p_2:B^{k}(\mathcal{U}_1,\mathcal{U}^*_2)\subset M_1\times M_2 \to  M_2$ is the projection then for $F_1\in p_2(B^{k}(\mathcal{U}_1,\mathcal{U}^*_2)) $ the inverse image $p_2^{-1}(F_1)$ is the locus
$$B^{k}(\mathcal{U}_1,F^*_1):=\{E_1\in M_1:h^0(E_1\otimes F_1^*)\geq k \}.$$
\item In particular, if $M_2=Pic^0(X)$ and $F_1=\mathcal{O}_X$ then $B^{k}(\mathcal{U}_1,\mathcal{O}_X)$ is the Brill-Noether locus $B(n_1,d_1,k)$. In this case we denote $\mathcal{Y}_k$ as $Y_k$. That is,
$$Y_k=B(n_1,d_1,k)-B(n_1,d_1,k+1).$$
\item In general, the product of two stable bundles is semistable. However, from  \cite[Lemma 3.5]{bbnpic} if on of the bundles is general then $E\otimes F^* $ is stable, or if $(n_0,d_0)=1$. Thus, in this case if
 $ B^{k}(\mathcal{U}_1,\mathcal{U}^*_2)\ne \emptyset$ then $B(n_0,d_0,k)\ne \emptyset $.
\item  If $M_1 \times M_2\ne B^{k}(\mathcal{U}_1,\mathcal{U}^*_2)$ and $B^{k}(\mathcal{U}_1,\mathcal{U}^*_2)\ne \emptyset$ then $\mathcal{Y}_k\ne \emptyset$.
\end{enumerate}
\end{em}\end{remark}

Using the notation of Remark \ref{remnotation} we can now formulate one of our main results.
For any $0\leq k\leq \frac{d_0}{2}+n_0,$ let $U_{{\mu _1}}(n,d,k)$ be the set
$$U_{{\mu _1}}(n,d,k):=\{E\in U_{{\mu _1}}(n,d): \dim End(E)=1+k\}.$$

\begin{theorem}\label{teo2} $U_{{\mu _1}}(n,d,k)=\mathbb{P}({\mathcal{R}}_{{\mu _1}}(\mathcal{Y}_k))$ is coarse moduli space and $U_{{\mu _1}}(n,d,0)$ is a fine moduli space.
Moreover, if $\mathcal{Y}_k$ is irreducible and smooth of dimension $\rho$, then $U_{{\mu _1}}(n,d,k)$ is irreducible and smooth of dimension $\rho + h^1-1. $
\end{theorem}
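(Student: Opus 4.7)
The plan is to derive this from Theorem \ref{teo1} by cutting out the stratum on which $\dim\End(-) = 1+k$, using the flattening stratification $\{\mathcal{Y}_k\}$ of $\mathcal{R}_{\mu_1}$ already constructed. By Corollary \ref{cor2}, for an indecomposable $E\in U_{\mu_1}(n,d)$ with HN-quotients $(E_1,F_1)\in M_1\times M_2$ (both stable since the type is coprime, hence simple), one has $\dim\End(E) = 1 + h^0(E_1\otimes F_1^*)$. Consequently $E\in U_{\mu_1}(n,d,k)$ if and only if $(E_1,F_1)\in \mathcal{Y}_k$. Combining this with Lemma \ref{indecom} (nontrivial extensions of simples are indecomposable) and Remark \ref{rem1} (two such extensions are isomorphic iff they are scalar multiples, because $E_1,F_1$ are simple) yields the set-theoretic bijection $U_{\mu_1}(n,d,k) \longleftrightarrow \mathbb{P}(\mathcal{R}_{\mu_1}(\mathcal{Y}_k))$.

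To upgrade this to a coarse moduli scheme structure, I would use the fact that on the stratum $\mathcal{Y}_k$ the sheaf $\mathcal{R}_{\mu_1}$ is locally free of rank $h^1 = k - d_0 + n_0(g-1)$ (this is precisely what the flattening stratification achieves via the determinantal description of $B^k(\mathcal{U}_1,\mathcal{U}_2^*)$), so $\pi:\mathbb{P}(\mathcal{R}_{\mu_1}(\mathcal{Y}_k)) \to \mathcal{Y}_k$ is a genuine projective bundle. Given any flat family $\mathcal{E}\to X\times T$ with each fiber in $U_{\mu_1}(n,d,k)$, the coprimality of the HN-type ensures that the relative HN-filtration exists as a subbundle with locally free quotient of the prescribed rank and degree; by the universal property of the fine moduli $M_1\times M_2$, this produces a morphism $T\to M_1\times M_2$, and the constancy of $h^0(E_1\otimes F_1^*) = k$ (combined with semi-continuity) forces it to factor through $\mathcal{Y}_k$. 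The extension class of $\mathcal{E}$ then defines a morphism $T\to \mathbb{P}(\mathcal{R}_{\mu_1}(\mathcal{Y}_k))$ compatible with the bijection, which is the required universal property.

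For the fine moduli claim at $k=0$, note $\mathcal{Y}_0$ is an open subset of $M_1\times M_2$ (Remark \ref{remy0}) and every $E\in U_{\mu_1}(n,d,0)$ is simple. Pulling back the universal bundles $\mathcal{U}_1,\mathcal{U}_2$ along $X\times \mathbb{P}(\mathcal{R}_{\mu_1}(\mathcal{Y}_0))\to X\times \mathcal{Y}_0$ and tensoring the subbundle by the tautological $\mathcal{O}_{\pi}(-1)$ (or equivalently taking the image of $\mathrm{id}$ under the canonical isomorphism between $\End$ of the relative $H^1$ and the relative $\Ext^1$ of the universal pieces twisted by $\mathcal{O}_\pi(1)$, exactly as in the pointwise construction preceding Proposition \ref{propfm}), one obtains a universal extension
\[
0\longrightarrow p_1^*\mathcal{U}_1\otimes p_2^*\mathcal{O}_\pi(1)\longrightarrow \mathcal{E}\longrightarrow p_1^*\mathcal{U}_2\longrightarrow 0
\]
on $X\times \mathbb{P}(\mathcal{R}_{\mu_1}(\mathcal{Y}_0))$, giving the fine moduli property.

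Finally, the dimension and smoothness assertion is automatic: since $\pi$ is a Zariski-locally trivial $\mathbb{P}^{h^1-1}$-bundle, if $\mathcal{Y}_k$ is smooth and irreducible of dimension $\rho$ then so is its total space, of dimension $\rho + h^1 - 1$. The main obstacle I anticipate is the relative HN-filtration step and the scheme-theoretic (not just set-theoretic) factorization of the classifying map through $\mathcal{Y}_k$; this will require invoking the openness of the HN-stratum of fixed coprime type together with the local-freeness of $\mathcal{R}_{\mu_1}$ along $\mathcal{Y}_k$ coming from the Brill--Noether flattening argument rather than any new input.
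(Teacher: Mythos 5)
Your proposal is correct and follows essentially the same route as the paper: both identify $U_{\mu_1}(n,d,k)$ with $\mathbb{P}(\mathcal{R}_{\mu_1}(\mathcal{Y}_k))$ via Corollary \ref{cor2} and the flattening stratification, obtain the fine moduli structure for $k=0$ from the Lange-type universal extension over $\mathbb{P}(\mathcal{R}_{\mu_1}(\mathcal{Y}_0))$ (which exists precisely because the relative $\mathcal{R}^0$ vanishes there), and deduce smoothness, irreducibility and the dimension count from the projective-bundle structure with fibre $\mathbb{P}(H^1(X,E_1\otimes F_1^*))$. You merely spell out the coarse-moduli universal property (relative HN filtration, classifying map to $M_1\times M_2$, factorization through $\mathcal{Y}_k$) in more detail than the paper, which delegates that part to Theorem \ref{teo1}.
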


\begin{proof}  From what has already
been proved and Corollary \ref{cor2} it follows that $$U_{{\mu _1}}(n,d,k)=\mathbb{P}({\mathcal{R}}_{{\mu _1}}({\mathcal{Y}_k})).$$

If $E\in U_{{\mu _1}}(n,d,0)$, E is simple and hence from Corollary \ref{cor2} $h^0(E_1\otimes F_1^*)=0$. Thus, $(\mathcal{R}^0_{{p_{23}}}(p_{12}^*{\mathcal{U}_{{1}}}\otimes p_{13}^*{\mathcal{U}^*_{{2}}})){{|_{{\mathcal{Y}_0}}}}=0$. Hence, from
\cite[Corollary 4.5]{lange}, $\mathbb{P}({\mathcal{R}}_{{\mu _1}}(\mathcal{Y}_0))$ parameterize a universal extension
$$0\to q^*(p_{12}^*(\mathcal{U}_1))\otimes p_2^*\mathbb{H}\to \mathcal{E}\to q^*p_{13}^*(\mathcal{U}_2)\to 0,$$
where $q:X\times \mathbb{P}({\mathcal{R}}_{{\mu _1}}(\mathcal{Y}_0)) \to X\times \mathcal{Y}_0$ is the induced map, $p_2:X\times \mathbb{P}({\mathcal{R}}_{{\mu _1}}(\mathcal{Y}_0))\to \mathbb{P}({\mathcal{R}}_{{\mu _1}}(\mathcal{Y}_0))$ is the projection  and $\mathbb{H}$ the hyperplane bundle over $\mathbb{P}({\mathcal{R}}_{{\mu _1}}(\mathcal{Y}_0))$.
The universal properties of the family $\mathcal{E}$ imply that the pair $(\mathbb{P}({\mathcal{R}}_{{\mu _1}}(\mathcal{Y}_0)),\mathcal{E}) $ is the fine moduli space for
$U_{{\mu _1}}(n,d,0)$.

If $\mathcal{Y}_k$ is irreducible and smooth of dimension $\rho$, a straightforward computation shows $U_{{\mu _1}}(n,d,k)$ is irreducible and smooth of dimension $\rho + h^1-1$,  since $\mathbb{P}({\mathcal{R}}_{{\mu _1}}(\mathcal{Y}_k)) $ is a projective bundle with fibre $\mathbb{P}(H^1(X, E_1\otimes F_1^*))$ at $(E_1,F_1)\in \mathcal{Y}_k$.
\end{proof}

\begin{corollary}\label{corprin0}
If $U_{{\mu _1}}(n,d,k)$ is non-empty, $B^{k}(\mathcal{U}_1,\mathcal{U}^*_2)$ is non-empty.
\end{corollary}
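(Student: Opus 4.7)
The plan is to deduce the corollary essentially for free from the identification established in Theorem \ref{teo2}, namely $U_{{\mu _1}}(n,d,k)=\mathbb{P}({\mathcal{R}}_{{\mu _1}}(\mathcal{Y}_k))$. First I would take any $E\in U_{{\mu _1}}(n,d,k)$ and invoke the coprime-type assumption: its HN-filtration $0\subset E_1\subset E$ has $E_1$ semistable of rank $n_1$, degree $d_1$ and $F_1=E/E_1$ semistable of rank $n_2$, degree $d_2$, with $\gcd(n_i,d_i)=1$, so both $E_1$ and $F_1$ are stable. Hence $(E_1,F_1)$ is a well-defined point of $M_1\times M_2$.

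Next I would apply Corollary \ref{cor2}: since $E_1$ and $F_1$ are simple,
$$\dim\End(E)=1+h^0(F_1^*\otimes E_1)=1+k,$$
so $h^0(E_1\otimes F_1^*)=h^0(F_1^*\otimes E_1)=k$. This says exactly that $(E_1,F_1)\in \mathcal{Y}_k$ by the definition $\mathcal{Y}_k=\{(E_1,F_1)\in M_1\times M_2 : h^0(E_1\otimes F_1^*)=k\}$ recalled after (\ref{support}). In particular $(E_1,F_1)\in B^{k}(\mathcal{U}_1,\mathcal{U}^*_2)$, since $\mathcal{Y}_k=B^{k}(\mathcal{U}_1,\mathcal{U}^*_2)\smallsetminus B^{k+1}(\mathcal{U}_1,\mathcal{U}^*_2)$.

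There is no real obstacle here beyond checking that the association $E\mapsto (E_1,F_1)$ lands in the right stratum; this is precisely what Corollary \ref{cor2} guarantees under the hypothesis that both quotients in the HN-filtration are simple (which holds by coprimality). Alternatively, one can argue purely on the level of spaces: the projection $\mathbb{P}({\mathcal{R}}_{{\mu _1}}(\mathcal{Y}_k))\to \mathcal{Y}_k$ is a projective bundle, so if the total space is non-empty then so is the base $\mathcal{Y}_k\subset B^{k}(\mathcal{U}_1,\mathcal{U}^*_2)$. Either formulation yields the stated non-emptiness of $B^{k}(\mathcal{U}_1,\mathcal{U}^*_2)$.
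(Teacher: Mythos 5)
Your proof is correct and follows essentially the same route as the paper: Corollary \ref{corprin0} is an immediate consequence of the identification $U_{{\mu _1}}(n,d,k)=\mathbb{P}({\mathcal{R}}_{{\mu _1}}(\mathcal{Y}_k))$ from Theorem \ref{teo2}, since a non-empty projective bundle forces its base $\mathcal{Y}_k\subset B^{k}(\mathcal{U}_1,\mathcal{U}^*_2)$ to be non-empty. Your pointwise check via Corollary \ref{cor2} (coprimality gives stability, hence simplicity, of $E_1$ and $F_1$, so $\dim \End(E)=1+h^0(E_1\otimes F_1^*)=1+k$ and $(E_1,F_1)\in\mathcal{Y}_k$) is exactly the content underlying that identification.
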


\begin{corollary}\label{corprin} If $\mathcal{Y}_k$ is irreducible and smooth then $H^i(\mathcal{U}_{{\mu _1}}(n,d,k), \mathbb{C})\cong H^i(\mathcal{Y}_k, \mathbb{C})$ for $i\geq 0$.
\end{corollary}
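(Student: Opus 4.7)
The plan is to exploit the projective bundle structure established in Theorem~\ref{teo2}: by that result the morphism $\pi:\mathcal{U}_{{\mu _1}}(n,d,k)=\mathbb{P}(\mathcal{R}_{{\mu _1}}(\mathcal{Y}_k))\to\mathcal{Y}_k$ is a Zariski-locally trivial projective bundle with fibre $\mathbb{P}^{h^1-1}$ over the smooth irreducible base $\mathcal{Y}_k$. I would proceed by combining the Leray spectral sequence with a Leray--Hirsch argument, and then read off the asserted isomorphism from the $\pi^*$-summand.

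First, I would set up the Leray spectral sequence
\[
E_2^{p,q}=H^p(\mathcal{Y}_k,R^q\pi_*\mathbb{C})\Longrightarrow H^{p+q}(\mathcal{U}_{{\mu _1}}(n,d,k),\mathbb{C}).
\]
Since the fibre $\mathbb{P}^{h^1-1}$ is simply connected and $\mathcal{Y}_k$ is connected, the higher direct images are the constant sheaves $R^q\pi_*\mathbb{C}=H^q(\mathbb{P}^{h^1-1},\mathbb{C})$, which is $\mathbb{C}$ when $0\le q\le 2(h^1-1)$ is even and $0$ otherwise.

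Second, I would invoke the relative hyperplane class $h\in H^2(\mathcal{U}_{{\mu _1}}(n,d,k),\mathbb{C})$ coming from the tautological line bundle on $\mathbb{P}(\mathcal{R}_{{\mu _1}}(\mathcal{Y}_k))$. Its restriction to each fibre generates the cohomology ring of $\mathbb{P}^{h^1-1}$, so Leray--Hirsch applies and forces $d_r=0$ for all $r\ge 2$. Combined with the pullback $\pi^*$ this yields the canonical isomorphism of graded $H^*(\mathcal{Y}_k,\mathbb{C})$-modules
\[
H^*(\mathcal{U}_{{\mu _1}}(n,d,k),\mathbb{C})\cong H^*(\mathcal{Y}_k,\mathbb{C})\otimes_{\mathbb{C}}H^*(\mathbb{P}^{h^1-1},\mathbb{C}).
\]

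The asserted isomorphism of the corollary is then identified with the map $\pi^*:H^i(\mathcal{Y}_k,\mathbb{C})\hookrightarrow H^i(\mathcal{U}_{{\mu _1}}(n,d,k),\mathbb{C})$, which by the previous step is injective and admits a splitting via integration along the fibres (fibre integration of the top power $h^{h^1-1}$). The principal obstacle is that the projective bundle identification above also contributes summands $\pi^*H^{i-2j}(\mathcal{Y}_k)\cdot h^j$ for $1\le j\le h^1-1$; promoting the injection $\pi^*$ to an isomorphism in every degree $i$, as the statement demands, requires either $h^1=1$ (in which case $\pi$ is an isomorphism and the conclusion is immediate) or the vanishing of $H^{i-2j}(\mathcal{Y}_k,\mathbb{C})$ for $j\ge 1$ in the relevant range. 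Pinning down under which of these hypotheses the proof is intended to run, and inserting the corresponding vanishing or dimension input coming from the twisted Brill--Noether description of $\mathcal{Y}_k$, is the step that requires the most care.
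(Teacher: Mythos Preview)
The paper states this corollary without proof, so there is no argument to compare against. Your approach via the Leray spectral sequence and Leray--Hirsch is the natural and correct way to analyse the cohomology of the projective bundle $\pi:\mathbb{P}(\mathcal{R}_{\mu_1}(\mathcal{Y}_k))\to\mathcal{Y}_k$, and your computation
\[
H^*(\mathcal{U}_{\mu_1}(n,d,k),\mathbb{C})\;\cong\;H^*(\mathcal{Y}_k,\mathbb{C})\otimes_{\mathbb{C}}H^*(\mathbb{P}^{h^1-1},\mathbb{C})
\]
is exactly what one obtains.

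You have also put your finger on a genuine difficulty with the statement as written: for a projective bundle of relative dimension $h^1-1\ge 1$ the Leray--Hirsch decomposition gives
\[
H^i(\mathcal{U}_{\mu_1}(n,d,k),\mathbb{C})\;\cong\;\bigoplus_{j=0}^{h^1-1}H^{i-2j}(\mathcal{Y}_k,\mathbb{C}),
\]
so already $H^2$ of the total space acquires an extra copy of $H^0(\mathcal{Y}_k,\mathbb{C})=\mathbb{C}$ beyond $H^2(\mathcal{Y}_k,\mathbb{C})$. There is no twisted Brill--Noether vanishing that would kill these extra summands in all degrees. The conclusion $H^i\cong H^i$ for every $i$ therefore cannot hold unless $h^1=1$, in which case $\pi$ is an isomorphism and the assertion is trivial. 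The corollary as printed appears to be an imprecise formulation; the content that actually follows from Theorem~\ref{teo2} is the tensor product decomposition above (equivalently, that $\pi^*$ embeds $H^i(\mathcal{Y}_k,\mathbb{C})$ as a direct summand of $H^i(\mathcal{U}_{\mu_1}(n,d,k),\mathbb{C})$), and your write-up should record that rather than attempt to force the degree-by-degree isomorphism.
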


\begin{remark}\begin{em} The moduli space of simple bundles of type $\sigma=(\mu (E_1), \dots ,\mu( E_m/E_{m-1}))$, for $m>2$, are considered in \cite{buns}.
\end{em}\end{remark}

\begin{remark}\begin{em}  Theorem \ref{teo2} expresses the equivalence of the existence and topology of the $U_{{\mu _1}}(n,d,k)$ and that of twisted Brill-Noether loci. For some values of $(g, n,d,k)$, non emptiness, dimension, and irreducibility of $B^{k}(\mathcal{U}_1,\mathcal{U}^*_2)$, and of $\mathcal{Y}_k$, are known for general curve (see \cite{hitching}). Thus, as in the Brill-Noether and twisted Brill-Noether theory for vector bundles, it is possible that for special curves the moduli space $U_{{\mu _1}}(n,d,k)$ is  even reduced. Thus, the above results differ from the corresponding results for the moduli space of
stable bundles, where non-emptiness, dimension etc. are independent of the curve.
\end{em}\end{remark}

\section{Non-emptiness of $U_{{\mu _1}}(n,d,k)$ }\label{nonempty}

 In this section we will prove non emptiness of $U_{{\mu _1}}(n,d,k)$ for some values of $(g,n,d,k)$. First we consider the case of bundles of type $\sigma =(\mu _1, \mu _2)$ where $\mu _1=\frac{d-a}{n-1}$ and $\mu _2=a$ is an integer. That is, the quotient of the maximal destabilizing subbundle is a line bundle of degree $a$. In this case $\mathcal{Y}_k $ is a subset of the Brill-Noether locus $B(n-1,d-an,k)$, and will be denoted as $Y_k$.

The next theorems are applications the known results on $B(n-1,d-an,k)$ and Theorem \ref{teo2}.

\begin{theorem}\label{teop3} Let $n,d,a $ and $k$ be positive integers.  Assume $0<d-an<2(n-1)$  and $(n-1,d-an,k)\ne (n-1,n-1,n-1)$. Then for $\mu _1=\frac{d-a}{n-1}$, $U_{{\mu _1}}(n,d,k)$ is non-empty if and only if $k\leq n-1+\frac{d-n(a+1)+1}{g}$.
Moreover, if $U_{{\mu _1}}(n,d,k)$ is non-empty then it is irreducible and smooth of the expected dimension.
\end{theorem}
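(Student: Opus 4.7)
The plan is to reduce Theorem \ref{teop3} to the classical Brill-Noether statement Theorem \ref{bn},(4) (the slope-in-$(0,2)$ case) via Theorem \ref{teo2}. Since $\mu_2 = a$ is an integer, the semistable quotient $F_1$ in the HN-filtration of any $E \in U_{\mu_1}(n,d,k)$ is a line bundle of degree $a$, so the relevant moduli are $M_1 = M(n-1, d-a)$ and $M_2 = \operatorname{Pic}^a(X)$. Theorem \ref{teo2} then gives $U_{\mu_1}(n,d,k) \cong \mathbb{P}(\mathcal{R}_{\mu_1}(\mathcal{Y}_k))$, so the problem reduces to describing the twisted stratum $\mathcal{Y}_k \subset M_1 \times M_2$.

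First I would identify $\mathcal{Y}_k$ with an ordinary Brill-Noether locus. The map
$$\Phi : M(n-1, d-a) \times \operatorname{Pic}^a(X) \longrightarrow M(n-1, d-an) \times \operatorname{Pic}^a(X),\qquad (E_1, F_1) \longmapsto (E_1 \otimes F_1^*, F_1),$$
is an isomorphism of moduli schemes (for each fixed $F_1$, tensoring by $F_1^*$ is an automorphism of $M(n-1, d-a) \to M(n-1, d-an)$), and because the defining condition $h^0(E_1 \otimes F_1^*) = k$ depends only on $E_1 \otimes F_1^*$, the map $\Phi$ identifies $\mathcal{Y}_k$ with $Y_k \times \operatorname{Pic}^a(X)$, where $Y_k := B(n-1, d-an, k) \setminus B(n-1, d-an, k+1)$.

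Next I would apply Theorem \ref{bn},(4) to the bundle $E_1 \otimes F_1^*$ of rank $n-1$ and degree $d-an$: the slope $(d-an)/(n-1)$ lies in $(0,2)$ by hypothesis, and $(n-1, d-an, k) \ne (n-1, n-1, n-1)$, so $B(n-1, d-an, k) \ne \emptyset$ if and only if $n-1 < (d-an) + (n-1-k)g$, which rearranges to exactly $k \leq n-1 + (d-n(a+1)+1)/g$; when non-empty the locus is irreducible of dimension $\rho(g, n-1, d-an, k)$, with singular set equal to $B(n-1, d-an, k+1)$. Hence $Y_k$ is smooth, irreducible and open in the Brill-Noether locus, so $\mathcal{Y}_k \cong Y_k \times \operatorname{Pic}^a(X)$ is smooth and irreducible of dimension $\rho + g$. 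Theorem \ref{teo2} then yields that $U_{\mu_1}(n,d,k)$ is smooth and irreducible of dimension $\rho + g + h^1 - 1$, and a direct computation matches this with the expected dimension $\beta(g, n-1, d-a, 1, a, k)$ of Section \ref{moduli}. The converse direction is immediate from Corollary \ref{corprin0}, which forces $B^k(\mathcal{U}_1, \mathcal{U}_2^*) \ne \emptyset$ and, via $\Phi$, the Brill-Noether inequality.

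The main obstacle to worry about — though it is minor here — is the positivity of $h^1(E_1 \otimes F_1^*)$ on $\mathcal{Y}_k$, since without it all extensions are trivial and $\mathbb{P}(\mathcal{R}_{\mu_1}(\mathcal{Y}_k))$ is empty regardless of Brill-Noether non-emptiness. Under the implicit convention $k > \chi$ recorded in Remark \ref{remigualk} (so that one is working with proper Brill-Noether loci) this is automatic in the slope range $0 < d - an < 2(n-1)$, and indecomposability of each non-trivial extension then follows from Lemma \ref{indecom} applied to the simple factors $E_1$ and $F_1$.
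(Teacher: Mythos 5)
Your proposal is correct and follows essentially the same route as the paper: the paper's proof is a one-line reduction of the statement to Theorem \ref{teo2} together with the results of Brambila-Paz--Grzegorczyk--Newstead and Mercat quoted in Theorem \ref{bn},(4), which is exactly your argument. You merely make explicit some details the paper leaves implicit (the identification $\mathcal{Y}_k\cong Y_k\times \operatorname{Pic}^a(X)$, the dimension count, and the need for $h^1>0$, i.e.\ the standing convention $k>\chi$ of Remark \ref{remigualk}), all of which check out.
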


\begin{proof} The theorem follows from Theorem \ref{teo2}, since from \cite[Theorem A]{bgn} and  \cite[A-1 Th\'{e}or\`{e}me]{mer}, $Y_k\subset B(n-1,d-an,k)$ has the required properties.
\end{proof}

\begin{remark}\begin{em} Note that under the assumptions of Theorem \ref{teop3}, the positivity of the expected dimension $\rho (n-1,d-an,1,a,k)$ does not imply non emptiness of $U_{{\mu _1}}(n,d,k) .$
\end{em}\end{remark}

For $g=2$ we have a complete description.

\begin{corollary} Assume  $g=2$. If $0<d-an<2(n-1)$ and $0\leq k\leq \frac{d-an}{2}+\frac{n-1}{2}$ then  $U_{{\mu _1}}(n,d,k)$ is irreducible and smooth of the expected dimension. If $d-an\geq 2(n-1)$, $U_{{\mu _1}}(n,d,k)= \emptyset .$
\end{corollary}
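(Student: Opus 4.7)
The plan is to derive both parts directly from the preceding results by specializing to $g=2$.

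For the range $0<d-an<2(n-1)$, I would apply Theorem \ref{teop3} without modification. The only calculation required is to verify that the bound $k\leq n-1+\frac{d-n(a+1)+1}{g}$ of Theorem \ref{teop3}, when specialized to $g=2$, simplifies by a one-line manipulation to
\[
k\leq n-1+\frac{d-na-n+1}{2}=\frac{d-an+n-1}{2}=\frac{d-an}{2}+\frac{n-1}{2},
\]
matching the stated hypothesis on $k$. The only subtlety is the exceptional case $(n-1,d-an,k)=(n-1,n-1,n-1)$ excluded by Theorem \ref{teop3}: I would dispose of it by observing that in this situation $B(n-1,n-1,n-1)=\emptyset$ by Theorem \ref{bn}(4), so $Y_k=\emptyset$ and $U_{\mu_1}(n,d,k)=\emptyset$ via Theorem \ref{teo2}; the conclusion then holds vacuously. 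Irreducibility, smoothness, and the expected dimension transfer directly from Theorem \ref{teop3}.

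For the emptiness claim when $d-an\geq 2(n-1)$, the natural tool is Lemma \ref{lema01}. The hypothesis translates into $\mu(E_1)-\mu(F_1)=\frac{d-an}{n-1}\geq 2=2g-2$, so whenever the inequality is strict Lemma \ref{lema01} forces every extension class $\rho_1$ to vanish. In that case $E=E_1\oplus F_1$ would be decomposable, contradicting membership of $E$ in $U_{\mu_1}(n,d)$; hence $U_{\mu_1}(n,d,k)$ is empty.

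The step I expect to be the main obstacle is the borderline $d-an=2(n-1)$, where Lemma \ref{lema01} requires \emph{strict} inequality and $H^1(X,F_1^*\otimes E_1)$ can fail to vanish on a special locus (for instance when $F_1^*\otimes E_1\cong K_X$ in the rank-two, line-quotient situation). I would handle this either by a refined vanishing argument leveraging the stability of both factors and the coprime HN-type constraint, or by a direct case analysis ruling out the few isolated pairs $(E_1,F_1)$ at which $h^1$ jumps; this boundary behaviour is the only point where the reduction to Theorem \ref{teop3} is not completely mechanical.
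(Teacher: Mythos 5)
Your treatment of the first claim is exactly the intended one: the paper states this corollary without proof as a direct specialization of Theorem \ref{teop3} to $g=2$, and your computation $n-1+\frac{d-n(a+1)+1}{2}=\frac{d-an}{2}+\frac{n-1}{2}$ is correct. Your disposal of the excluded triple $(n-1,d-an,k)=(n-1,n-1,n-1)$ via $B(n-1,n-1,n-1)=\emptyset$ (Theorem \ref{bn}(4)) is also sound; note that this triple does fall inside the stated range (it is the case $k=n-1=d-an$), so the corollary's conclusion there holds only vacuously --- a wrinkle in the statement itself, not in your argument.

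The second claim is where the real problem lies, and you have correctly located it, but your proposed repairs do not close it. For $d-an>2(n-1)$ one has $\mu(F_1^*\otimes E_1)>2g-2$ and Lemma \ref{lema01} (or Proposition \ref{propfm}) gives emptiness, as you say. At the boundary $d-an=2(n-1)$, i.e. $\mu(F_1^*\otimes E_1)=2g-2=2$, the ``refined vanishing argument leveraging stability'' does work when $n\geq 3$: there $E_1\otimes F_1^*$ is stable of rank $n-1\geq 2$ and slope $2$, so by Serre duality $h^1(E_1\otimes F_1^*)=h^0(K_X\otimes E_1^*\otimes F_1)$ with $K_X\otimes E_1^*\otimes F_1$ stable of rank at least $2$ and slope $0$, hence zero. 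But for $n=2$ the ``few isolated pairs'' cannot be ruled out: taking $E_1=K_X\otimes F_1$ gives $h^1(F_1^*\otimes E_1)=h^0(\mathcal{O}_X)=1$, and the corresponding nonsplit extension $0\to E_1\to E\to F_1\to 0$ is, by Lemma \ref{indecom}, an indecomposable bundle of HN-type $(a+2,a)$ with $\dim End(E)=1+h^0(K_X)=3$, i.e. a genuine point of $U_{\mu_1}(2,2a+2,2)$. So the emptiness assertion, as literally stated, fails for $n=2$ at the boundary; it is correct only under the strict inequality $d-an>2(n-1)$, or for $n\geq 3$. Your instinct that this borderline is the one non-mechanical step was right, but the fix you sketch (ruling out the jumping pairs by case analysis) is not available --- those pairs produce actual elements of the moduli space, so the statement would need to be amended rather than the proof completed.
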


We now rephrase Theorem \ref{bn},(5),(6) and (7) as follows.

\begin{theorem}\label{teop4}
 Let  $(g,n-1,d-na,k)$  be integers that satisfies the conditions given in  Theorem \ref{bn},(5),(6) and (7).
 For general curve,  $U_{{\mu _1}}(n,d,k)$ is non-empty and has an irreducible component of the expected dimension and if  $X$ is a Petri curve of genus $g\geq 3,  n\geq 5$ and $g\geq 2n-4$ then $U_{{\mu _1}}(n,d,n)$ is non-empty.
\end{theorem}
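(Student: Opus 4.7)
The plan is to reduce Theorem \ref{teop4} to the corresponding cases of Theorem \ref{bn} via Theorem \ref{teo2}. Since $\mu_2 = a$ is an integer, we have $n_2 = 1$, $d_2 = a$ and $M_2 = \operatorname{Pic}^a(X)$. For a pair $(E_1, L) \in M(n-1, d-a) \times \operatorname{Pic}^a(X)$, the bundle $E_1 \otimes L^*$ has rank $n-1$ and degree $d - na$, and $h^0(E_1 \otimes L^*) = k$ precisely when $E_1 \otimes L^*$ lies in the ordinary Brill-Noether stratum $Y_k = B(n-1, d-na, k) \setminus B(n-1, d-na, k+1)$. The map $(E_1, L) \mapsto E_1 \otimes L^*$ is surjective onto $M(n-1, d-na)$ with $g$-dimensional fibres, and $\mathcal{Y}_k$ is precisely the preimage of $Y_k$; hence $\mathcal{Y}_k$ inherits non-emptiness, irreducibility, smoothness and (up to the added factor $g$) the expected dimension from $Y_k$.

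For the general-curve part, I would apply Theorem \ref{bn},(5) or (6) to $B(n-1, d-na, k)$: under the numerical hypotheses it produces an irreducible component of the expected Brill-Noether dimension. By Theorem \ref{bn},(2) the general point of this component has $h^0$ equal to $k$, so the component meets $Y_k$ in a dense open subset, and consequently $\mathcal{Y}_k$ contains an irreducible component of the expected dimension. Theorem \ref{teo2} then gives an irreducible component of $U_{\mu _1}(n,d,k) = \mathbb{P}(\mathcal{R}_{\mu _1}(\mathcal{Y}_k))$ of dimension $\dim \mathcal{Y}_k + h^1 - 1$, and a direct count shows this equals the expected dimension $\beta(g, n-1, d-na, 1, a, k)$.

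For the Petri-curve part, Theorem \ref{bn},(7), applied to rank $n-1$ under the stated genus hypothesis, yields $B(n-1, d-na, k) \neq \emptyset$ for the relevant threshold. The same tensor-product argument shows $\mathcal{Y}_k \neq \emptyset$, and Theorem \ref{teo2} concludes $U_{\mu _1}(n,d,k) \neq \emptyset$.

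The main obstacle is verifying that the irreducible component of $B(n-1, d-na, k)$ supplied by Theorem \ref{bn} is a \emph{proper} component of $\{h^0 \geq k\}$, i.e., that it meets the open stratum $Y_k$ and is not entirely contained in $B(n-1, d-na, k+1)$. For parts (5) and (6) this follows from the expected-dimension assertion together with Theorem \ref{bn},(2); for part (7) the claim is non-emptiness only, and one must consult the construction in \cite{bbn2} to ensure the bundles produced have $h^0$ exactly equal to the required threshold (rather than merely at least equal).
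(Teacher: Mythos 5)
Your proposal is correct and follows essentially the same route as the paper, which offers no proof beyond the remark that Theorem \ref{teop4} ``rephrases'' Theorem \ref{bn},(5),(6),(7) via the identification $U_{\mu_1}(n,d,k)=\mathbb{P}(\mathcal{R}_{\mu_1}(\mathcal{Y}_k))$ of Theorem \ref{teo2}, with $\mathcal{Y}_k$ fibred over the classical stratum $Y_k\subset B(n-1,d-na,k)$. In fact you supply more detail than the paper does — in particular the explicit $\operatorname{Pic}^a(X)$-fibration $(E_1,L)\mapsto E_1\otimes L^*$ and the caveat that the component produced by Theorem \ref{bn} must genuinely meet the open stratum $\{h^0=k\}$ (and, for part (7), that the bundles of \cite{bbn2} realise the threshold exactly), points the paper leaves implicit.
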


In the above case, to the best of our knowledge, it is know that there exists an irreducible component of the expected dimension, however, in general,  it is not known that $B(n,d,k)$ is irreducible.

Let us  now consider the general case. We will use the notation given in Remark \ref{remnotation}. Recall that $B^{k}(\mathcal{U}_1,\mathcal{U}^*_2) \subset M_1\times M_2$.
 Denote by $M_{12}$ the moduli space of stable bundles ${M}(n_0,d_0 )$.  From what has already been proved, we conclude that if $B(n_0,d_0,k)=\emptyset $ then $U_{{\mu _1}}(n,d,k)=\emptyset .$ However, if $B(n_0,d_0,k)\ne \emptyset $ does not imply that $U_{{\mu _1}}(n,d,n)\ne \emptyset .$
Let $\Phi :M_1\times M_2 \to M_{12}$ be the morphism  defined as $(E, F)\mapsto E\otimes F^*$. This gives
$$\Phi (B^{k}(\mathcal{U}_1,\mathcal{U}^*_2))  \subset B(n_0,d_0,k).$$ We want to describe $\Phi (B^{k}(\mathcal{U}_1,\mathcal{U}^*_2))$ in some cases.

\begin{remark}\begin{em}\label{remmulti} Let $G$ be a vector bundle of rank $m$ and degree $d_2$ generated by linear subspace $V\subset H^0(X,E)$ of dimension $k_2=m+n_2$. Let $D_{E,V}^*$ be the kernel of the evaluation map (or the  syzygy bundle). Let us introduce the temporary notation $F_1^*$ for $D_{E,V}^*$.  That is, $F_1^*$ has rank $rk (F_1)=n_2$, degree $d(F_1^*)=-d_2$ and fits into the following exact sequence
\begin{equation}\label{eqdualm}
 0\to F_1^*\to V\otimes \mathcal{O}\to G\to 0.
\end{equation}
Tensor (\ref{eqdualm}) with a vector bundle $E_1$ of rank $n_1$ and degree $d_1$ with $h^0(E_1)\geq k_1$. The injectivity of the multiplication map $\mu _{{V,E_1}}: V\otimes H^0(E_1)\to H^0(G\otimes E_1)$ is measure by $h^0(F_1^*\otimes E_1)$. Indeed, if $H^0(F_1^*\otimes E_1)=0$, $\mu _{{V,E_1}}$ is injective.   From the cohomology sequence
$$0\to H^0(F_1^*\otimes E_1)\to V\otimes H^0(E_1)\to H^0(G\otimes E_1)\to$$
we obtain the inequality
\begin{equation}\label{eqdesigk}
\dim V\cdot h^0(F)-  h^0(G\otimes E_1)\leq h^0(F_1^*\otimes E_1).
\end{equation}
Thus, if
\begin{equation}\label{eqdesigk1}
k\leq k_1k_2-h^0(G\otimes E_1)
\end{equation}
 then $k\leq h^0(F_1^*\otimes E_1).$
\end{em}\end{remark}

 With the above notation,  $n_0=rk(F_1^*\otimes E_1)=n_1n_2$ and  $d_0=d(F_1^*\otimes E_1)=d_1n_2-d_2n_1.$ Let $\tilde{n}:= rk(G\otimes E_1)=mn_1$ and $\tilde{d}:=d(G\otimes E_1)=d_2n_1+d_1m$.
Our interest is to apply Remark \ref{remmulti} to situations in which $E_1$ is general in $M(n_1,d_1)=M_1$,  $F_1\in M(n_2,d_2)=M_2 $ and $k\geq 0$, in particular when $\mu _{{V,E_1}}$ is not injective. In that case, $(E_1,F_1^*) \in B^k(U_1,U_2^*)$ and $E_1\otimes F_1^*\in B(n_0,d_0,k)$. The following theorems give existence of some Brill-Noether and twisted Brill-Noether loci, and therefore the non emptiness of $U_{{\mu_1}}({n},{d},k)$. The proofs are different of those of \cite{hitching} and some values, to our best knowledge, are not been include in \cite{hitching} nor in Theorem  \ref{bn} (see \cite{tbn}).

\begin{theorem}\label{teop05}
 Assume that $B(n_1,d_1,n_1+a)$ is non-empty with $a>0$. If $2n_1<d_1<a(g+1)$ and $d_2> 2gm$ then for any $0\leq k\leq (d_2+m(1-g))(n_1+a) -(d_2n_1+d_1m +mn_1(1-g))$,   $\mathcal{Y}_k\subset B^k(U_1,U_2^*)$ is non-empty. Moreover, if $\mu_1 = \frac{d_1}{n_1}$ then $U_{{\mu_1}}({n},{d},k)$ is non-empty, where $n=n_1+n_2$ and $d=d_1+d_2$.
\end{theorem}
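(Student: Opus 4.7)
The plan is to apply the syzygy construction of Remark \ref{remmulti} to manufacture a pair $(E_1, F_1) \in M_1 \times M_2$ with many sections in $F_1^* \otimes E_1$. By hypothesis, pick $E_1 \in B(n_1, d_1, n_1 + a)$ with $h^0(E_1) \geq n_1 + a$. Since $d_2 > 2gm > 2g$, choose a stable generated vector bundle $G$ of rank $m$ and degree $d_2$; then $h^1(G) = 0$, so $h^0(G) = d_2 + m(1-g)$ by Riemann-Roch. Set $V := H^0(G)$ and let $F_1^* := D_{G,V}^*$ be the syzygy bundle fitting into
\[
0 \to F_1^* \to V \otimes \mathcal{O} \to G \to 0,
\]
so that $\rk(F_1^*) = n_2 := d_2 - mg$ and $d(F_1^*) = -d_2$. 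A Butler-type stability theorem, whose hypotheses are fulfilled by $d_2 > 2gm$ and a generic choice of $(G, V)$, ensures that $F_1^*$, and hence $F_1$, is stable, so $F_1 \in M_2$.

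Tensor the defining sequence with $E_1$ and take global sections. The tensor $G \otimes E_1$ is semistable of slope $\mu(G) + \mu(E_1) > 2g + 2 > 2g - 2$ (using $d_2 > 2gm$ and $d_1 > 2n_1$), so Serre duality yields $h^1(G \otimes E_1) = 0$ and hence
\[
h^0(G \otimes E_1) = d_2 n_1 + d_1 m + m n_1 (1 - g).
\]
The resulting long exact sequence
\[
0 \to H^0(F_1^* \otimes E_1) \to V \otimes H^0(E_1) \to H^0(G \otimes E_1)
\]
together with $h^0(E_1) \geq n_1 + a$ and $\dim V = d_2 + m(1-g)$ yield
\[
h^0(F_1^* \otimes E_1) \geq (d_2 + m(1-g))(n_1 + a) - (d_2 n_1 + d_1 m + m n_1 (1-g)) =: N.
\]
In particular $(E_1, F_1) \in B^N(\mathcal{U}_1, \mathcal{U}_2^*)$, which by the filtration $B^N \subseteq B^k$ proves $B^k(\mathcal{U}_1, \mathcal{U}_2^*)$ non-empty for every $0 \leq k \leq N$.

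To strengthen this to non-emptiness of the open stratum $\mathcal{Y}_k = B^k \setminus B^{k+1}$, deform $E_1$ in a suitable one-parameter family inside $M_1$ interpolating between the chosen special element of $B(n_1, d_1, n_1+a)$ (at which $h^0(F_1^* \otimes E_1)$ attains at least $N$) and a generic point of $M_1$ (at which $h^0(F_1^* \otimes E_1) = 0$ by semistability and slope reasoning). Upper semicontinuity of $h^0$, combined with irreducibility of the parameterizing family and the integrality of $h^0$, forces each value in $[0, N]$ to be attained, producing representatives of each $\mathcal{Y}_k$. Applying Theorem \ref{teo2}, $U_{\mu _1}(n,d,k) = \mathbb{P}(\mathcal{R}_{\mu _1}(\mathcal{Y}_k))$, so non-emptiness of $\mathcal{Y}_k$ together with the slope-based verification that $h^1(F_1^* \otimes E_1) > 0$ on $\mathcal{Y}_k$ (a consequence of $d_1 < a(g+1)$ and $d_2 > 2gm$, giving the factorization $h^1 = (a + n_1 g - d_1)(d_2 - m(g-1)) > 0$ at the extremal $k$, and hence throughout the range by monotonicity of $h^1$ in $k$) gives non-emptiness of $U_{\mu _1}(n,d,k)$.

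The principal obstacles are twofold. First, establishing stability of the syzygy bundle $F_1^*$ for generic $(G, V)$ requires invoking a Butler-type theorem, and the precise hypotheses match the condition $d_2 > 2gm$. Second, and more delicate, is passing from non-emptiness of the closed loci $B^k$ to non-emptiness of each open stratum $\mathcal{Y}_k$; the naive use of upper semicontinuity only shows the generic $h^0$ along a family is the minimum, and extra input — such as a deformation argument inside the Brill-Noether stratification of $M_1$ combined with irreducibility results from Theorem \ref{bn} — is needed to guarantee that every intermediate integer value is realized.
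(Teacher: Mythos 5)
Your main construction coincides with the paper's own proof of Theorem \ref{teop05}: take $G\in M(m,d_2)$, which is generated because $d_2>2gm$; let $F_1^*$ be the kernel of the evaluation map $H^0(G)\otimes\mathcal{O}\to G$, stable by \cite[Theorem 1.2]{but}, so $F_1\in M(n_2,d_2)$ with $n_2=d_2-mg$; tensor with $E_1\in B(n_1,d_1,n_1+a)$ and use the multiplication-map inequality of Remark \ref{remmulti} together with $h^1(G)=h^1(G\otimes E_1)=0$ to obtain $h^0(F_1^*\otimes E_1)\ge (d_2+m(1-g))(n_1+a)-(d_2n_1+d_1m+mn_1(1-g))=:N$. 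The positivity of $N$ under $d_1<a(g+1)$ and $d_2>2gm$ is exactly the paper's computation (\ref{eqdesigg}). Up to this point your argument is correct and is the paper's argument, stated if anything a little more carefully (you make the vanishing $h^1(G\otimes E_1)=0$ and the stability of $F_1$ explicit).

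Where you diverge is in trying to land in the locally closed stratum $\mathcal{Y}_k$ (exactly $k$ sections) rather than merely in the closed locus $B^{k}(\mathcal{U}_1,\mathcal{U}^*_2)$. You are right that this is what the theorem literally asserts and that it does not follow from $(E_1,F_1)\in B^{N}\subseteq B^{k}$ alone. But your proposed remedy --- upper semicontinuity of $h^0$ along a one-parameter family joining the special $E_1$ to a general point of $M_1$ --- does not work: semicontinuity only makes each $\{h^0\ge j\}$ closed, and along a curve the value of $h^0$ can jump from $0$ to $N$ at a single point; irreducibility and integrality do not give an intermediate-value property. You concede this yourself in your closing remarks. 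For comparison, the paper's proof does not address this step at all: it passes directly from $B^{k}\ne\emptyset$ to the conclusion. So your proposal reproduces everything the paper actually proves, and the one step you try to add is genuinely missing from both arguments; closing it would require separate input, e.g.\ showing $B^{k+1}(\mathcal{U}_1,\mathcal{U}^*_2)\subsetneq B^{k}(\mathcal{U}_1,\mathcal{U}^*_2)$ for each $k$ in the stated range by a dimension count or by the structure results for twisted Brill--Noether loci in \cite{hitching}.
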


\begin{proof} Let $G\in M(m,d_2)$ and $E_1\in B(n_1,d_1,n_1+a)$. We want to prove that $0\leq k_1k_2-h^0(G\otimes E_1)$, where $k_1=n_1+a$ and $k_2=d_2+m(1-g)$.

  If $d_2> 2gm$, $G$ is generated and from \cite[Theorem 1.2]{but} we have the exact sequence
\begin{equation}\label{eqdual01}
0\to F_1^*\to H^0(G)\otimes \mathcal{O}\to G\to 0
\end{equation}
with  $F_1\in M(n_2, d_2) $ where $n_2:= h^0(G)-m=d_2-mg.$
From Remark \ref{remmulti} and (\ref{eqdesigk1})
\begin{equation}\label{eqdesigk2}
  h^0(G)\cdot (n_1+a) \leq h^0(G)\cdot h^0(E_1)- h^0(G\otimes E_1)\leq h^0(E_1\otimes F_1^*).
\end{equation}

If $d_1<a(g+1)$ and $d_2> 2gm$ then  $\frac{d_1}{a}+g-1<2g<\frac{d_2}{m}$. Hence,
\begin{equation}\label{eqdesig}
d_1m+am(g-1)<ad_2.
\end{equation}
Now add in both sides of \ref{eqdesig} $d_2n_1+mn_1(g-1)$ to obtain
$$
d_2n_1+d_1m +(n_1+a)m(g-1)<d_2(n_1+a)+n_1m(g-1).
$$
Hence,
\begin{equation}\label{eqdesigg}
0<(n_1+a)(d_2+m(1-g))-(d_2n_1+d_1m + mn_1(1-g))=k_1k_2-h^0(G\otimes E_1).
\end{equation}
This gives $0\leq k\leq h^0(E_1\otimes F_1^*)$, for any $0\leq k\leq (d_2+m(1-g))(n_1+a) -(d_2n_1+d_1m +mn_1(1-g))$. It follows that $B^k(U_1,U_2^*)$ is non-empty and, in consequence,  $U_{{\mu_1}}(n,d,k)$ is non-empty.
\end{proof}

\begin{theorem}\label{teopetrif}  Let $X$ be a Petri curve of genus $g\geq 3$ and $(\mathcal{O}(D), V)$ a general generated linear system of degree $d_2\geq g+1$ and $\dim V =n_2+1$ with $n_2\leq 4 $ or if $n_2\geq 5$ then $g \geq  2n_2-4$. Assume that $B(n_1,d_1,t)$ is non-empty and $\frac{d_2}{n_2}<\frac{d_1}{n_1}$. For any $0\leq k \leq n_2t-n_1d_2$, $ \mathcal{Y}_k \subset B^k(U_1,U_2^*)$ is non-empty and if $n=n_2+n_1$, $d=d_2+d_1$ and $\mu _1=\frac{d_1}{n_1}$,  $U_{{\mu_1}}(n,d,k)$, is non-empty.
\end{theorem}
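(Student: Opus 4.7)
The plan is to exhibit, for each admissible $k$, a pair $(E_1,F_1)\in \mathcal{Y}_k$ and then invoke Theorem~\ref{teo2}. The vector bundle $F_1$ will come from the generated linear system $(\mathcal{O}(D),V)$: its dual syzygy sequence
\[
0\to F_1^*\to V\otimes \mathcal{O}\to \mathcal{O}(D)\to 0
\]
produces a bundle $F_1$ of rank $n_2$ and degree $d_2$. Under the hypotheses ($X$ Petri of genus $g\ge 3$, with $n_2\le 4$ or $g\ge 2n_2-4$, and $d_2\ge g+1$ with $(\mathcal{O}(D),V)$ general), $F_1$ is stable by classical results on syzygy bundles over Petri curves, so $F_1\in M(n_2,d_2)=M_2$; combined with $d_2/n_2<d_1/n_1$ this places the pair in the right HN-stratum.

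Next, pick $E_1\in B(n_1,d_1,t)$, which is non-empty by hypothesis. Tensoring the syzygy sequence with $E_1$ and passing to cohomology gives
\[
0\to H^0(F_1^*\otimes E_1)\to V\otimes H^0(E_1)\to H^0(\mathcal{O}(D)\otimes E_1),
\]
so that, using $\dim V=n_2+1$ and $h^0(E_1)\ge t$,
\[
h^0(F_1^*\otimes E_1)\ge (n_2+1)t-h^0(\mathcal{O}(D)\otimes E_1).
\]
This is exactly the estimate singled out in Remark~\ref{remmulti}, applied with $G=\mathcal{O}(D)$, $m=1$, $k_1=t$, $k_2=n_2+1$.

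The step I expect to be the principal obstacle is to sharpen the inequality above to the stated bound $h^0(F_1^*\otimes E_1)\ge n_2 t-n_1 d_2$; equivalently, to establish the upper estimate
\[
h^0(\mathcal{O}(D)\otimes E_1)\le t+n_1 d_2
\]
for a general $E_1\in B(n_1,d_1,t)$. This is where the Petri hypothesis is essential: for $X$ Petri and for $(\mathcal{O}(D),V)$ and $E_1$ generic in their respective moduli, the multiplication map $\mu_{V,E_1}\colon V\otimes H^0(E_1)\to H^0(\mathcal{O}(D)\otimes E_1)$ should attain its Petri-expected rank, and the slope inequality $n_1 d_2<n_2 d_1$ then converts this into the desired estimate by a computation parallel to the one used in the proof of Theorem~\ref{teop05}.

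Granted this estimate, for each $0\le k\le n_2 t-n_1 d_2$ the pair $(E_1,F_1)$ lies in $B^k(\mathcal{U}_1,\mathcal{U}_2^*)$. To land inside the open stratum $\mathcal{Y}_k$, note that the filtration $B^k\supset B^{k+1}\supset\cdots$ is proper, since the expected dimension $\rho(g,n_1,d_1,n_2,d_2,\ell)$ strictly drops as $\ell$ grows, so one may choose $E_1$ for which the maximal $\ell$ with $h^0(E_1\otimes F_1^*)\ge\ell$ equals $k$, placing $(E_1,F_1)\in\mathcal{Y}_k$. Theorem~\ref{teo2} then identifies $U_{\mu_1}(n,d,k)$ with $\mathbb{P}(\mathcal{R}_{\mu_1}(\mathcal{Y}_k))$, giving the claimed non-emptiness.
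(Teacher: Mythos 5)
Your overall route is the same as the paper's: take the syzygy bundle $F_1^*$ of the general generated linear system $(\mathcal{O}(D),V)$ (stable on a Petri curve by the coherent-systems results cited in the paper), tensor the sequence $0\to F_1^*\to V\otimes\mathcal{O}\to\mathcal{O}(D)\to 0$ with $E_1\in B(n_1,d_1,t)$, and bound $h^0(F_1^*\otimes E_1)$ from below by $(n_2+1)t-h^0(\mathcal{O}(D)\otimes E_1)$ as in Remark \ref{remmulti}. The one step you leave unproven --- the estimate $h^0(\mathcal{O}(D)\otimes E_1)\le t+n_1d_2$ --- is where your proposal has a genuine gap, and the mechanism you suggest for it would not work: the generic maximal-rank behaviour of the multiplication map $\mu_{V,E_1}$ controls the kernel and cokernel of a map \emph{into} $H^0(\mathcal{O}(D)\otimes E_1)$, so it cannot produce an upper bound on the dimension of that target; an upper bound on $h^0(\mathcal{O}(D)\otimes E_1)$ is a statement about the bundle $\mathcal{O}(D)\otimes E_1$ alone, independent of $V$. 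The paper gets it for free from the elementary sequence
$$0\to E_1\to \mathcal{O}(D)\otimes E_1\to (E_1)_D\to 0,$$
which gives $h^0(\mathcal{O}(D)\otimes E_1)\le h^0(E_1)+h^0((E_1)_D)=t+n_1\deg D=t+n_1d_2$. No Petri or genericity input is needed at this point; the Petri hypothesis is used only to guarantee that the syzygy bundle $F_1$ is stable, so that $(E_1,F_1)\in M_1\times M_2$. Once you replace your conjectural step by this elementary count, your argument closes.

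Two smaller remarks. First, the slope hypothesis $d_2/n_2<d_1/n_1$ is not what converts the estimate (the count above already does that); it is needed so that the extensions of $F_1$ by $E_1$ have HN-type $(\mu_1,\mu_2)$ with $\mu_1>\mu_2$, i.e.\ so that Theorem \ref{teo2} applies. Second, your passage from $B^k$ to the open stratum $\mathcal{Y}_k$ via the drop in \emph{expected} dimensions is only heuristic (expected dimensions dropping does not by itself make the filtration strict); to be fair, the paper's own proof also stops at exhibiting a pair with $h^0(F_1^*\otimes E_1)\ge n_2t-n_1d_2\ge k$ and does not argue this point more carefully, so this is a shared weakness rather than a defect specific to your write-up.
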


\begin{proof} Under the above conditions there exist an exact sequence
\begin{equation}\label{eqdual02}
0\to F_1^*\to V\otimes \mathcal{O}\to \mathcal{O}(D)\to 0
\end{equation}
such that $F_1\in M(n_2,d_2) $ (see \cite{b}, \cite{bbn1} and \cite{bbn2}).

Let $E_1\in B(n_1,d_1,n_1+a)$ such that $h^0(E_1)=t\geq n_1+a$. From Remark \ref{remmulti} and (\ref{eqdesigk1})
\begin{equation}\label{eqdesigk02}
   (n_2+1)\cdot t-  h^0(\mathcal{O}(D)\otimes E_1)\leq h^0(F_1^*\otimes E_1).
\end{equation}
From the assumtion $\frac{d_2}{n_2}<\frac{d_1}{n_1}$ and the exact sequence
\begin{equation}
0\to E_1\to \mathcal{O}(D)\otimes E_1\to (E_1)_D\to 0
\end{equation}
we deduce that
\begin{equation}\label{eqdesigk002}
 tn_2-d_2n_1 < (n_2+1)\cdot t-  h^0(\mathcal{O}(D)\otimes E_1)\leq h^0(F_1^*\otimes E_1).
\end{equation}
Therefore, for any $0\leq k\leq  tn_2-d_2n_1$,  $\mathcal{Y}_k \subset B^k(U_1,U_2^*)$ is non-empty and if $n=n_2+n_1$, $d=d_2+d_1$ and $\mu _1=\frac{d_1}{n_1}$,  $U_{{\mu_1}}(n,d,k)$, is non-empty, as claimed.
\end{proof}

\begin{remark}\begin{em} Under the hypothesis of Theorems \ref{teop05} and \ref{teopetrif} we can choose the $d_2,n_2,n_1$ and $d_1$ such that $(n_0,d_0)=1$, and hence $B(n_0,d_0,k) $ is non-empty.
\end{em}\end{remark}

We conclude now with a description of a smooth point of $U_{{\mu _1}}(n,d,k)$. Recall that $U_{{\mu _1}}(n,d,k)$ is a projective bundle over $\mathcal{Y}_k$.
Let $0\subset E_1\subset E$ be the HN-filtration of $E\in U_{{\mu _1}}(n,d,k)$. Write $E/E_1=F_1$.  
Let us describe the tangent bundle of $B^{k}(\mathcal{U}_1,\mathcal{U}^*_2)$ at a point $(E_1,F_1^*)$. We abbreviate $B^{k}(\mathcal{U}_1,\mathcal{U}^*_2)$ to $\mathcal{B}_{12}$.  Since $\mathcal{B}_{12}\subset M_1\times M_2$, for any $z:=(E_1,F_1)\in \mathcal{B}_{12}$ we have the sequence
\begin{equation}\label{tan1}
0\to T_{{z}} \mathcal{B}_{12} \to T_{{E_1}}M_1\oplus T_{{F_1}}M_2\to N\to 0
\end{equation}
where $N$ is the normal bundle. Moreover, from the morphism $\Phi :M_1\times M_2 \to M_{12}$ we get
\begin{equation}\label{tan2}
 T_{{E_1}}M_1\oplus T_{{F_1}}M_2\stackrel{d\Phi}{\to } T_{{z}}M_{1,2}
\end{equation}
That is, 
$$H^1(X,End(E_1))\oplus H^1(X,End(F_1))\stackrel{d\Phi}{\to } H^1(X,End(E_1\otimes F_1^*)). $$
Moreover, from the dual of the multiplication map
$$ H^0(X,E_1\otimes F_1^*)\otimes H^0(X,(E_1\otimes F_1^*)^*\otimes K)\to H^0(X,End (E_1\otimes F_1^*)\otimes K)$$
we have the morphism
$$ H^1(X,End(E_1\otimes F_1^*))\stackrel{\beta}{\to } H^0(X,E_1\otimes F_1^*)^*\otimes H^1(X,E_1\otimes F_1^*). $$
Thus,
$$
\begin{array}{ccc}
H^1(X,End(E_1))\oplus  H^1(X,End(F_1))&\stackrel{d\Phi}{\to } & H^1(X,End(E_1\otimes F_1^*))\\
&\eta\searrow & \beta\downarrow \\
&&H^0(X,E_1\otimes F_1^*)^*\otimes H^1(X,E_1\otimes F_1^*)
\end{array}
$$
where $\eta=\beta \circ d\Phi $ and  the image of $\eta $ is precisely the normal bundle. Hence from \cite[Proposition 3.5]{hitching} and Theorem \ref{teo2} we conclude

\begin{theorem}\label{teop8} $U_{{\mu _1}}(n,d,k)$ is smooth at $E$ and of expected dimension
$$\beta (g,n_1,d_1,n_2,d_2,k)\   \mbox{if and only if} \  \eta \   \mbox{is surjective}. \hspace{6cm}$$
\end{theorem}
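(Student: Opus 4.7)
The plan is to reduce the smoothness of $U_{{\mu_1}}(n,d,k)$ at $E$ to the smoothness of $\mathcal{Y}_k$ at the point $(E_1,F_1)$ determined by the HN--filtration $0\subset E_1\subset E$, and then to apply the smoothness criterion for twisted Brill--Noether loci from \cite[Proposition 3.5]{hitching}. By Theorem \ref{teo2}, we have a biregular identification $U_{{\mu_1}}(n,d,k)=\mathbb{P}(\mathcal{R}_{{\mu_1}}(\mathcal{Y}_k))$, which is a projective bundle over $\mathcal{Y}_k$ with fibre at $(E_1,F_1)$ equal to $\mathbb{P}(H^1(X,E_1\otimes F_1^*))$, of dimension $h^1-1$. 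Since projective bundles are smooth morphisms, the point $E\in U_{{\mu_1}}(n,d,k)$ is a smooth point of dimension equal to $\dim_{(E_1,F_1)}\mathcal{Y}_k+(h^1-1)$ if and only if $(E_1,F_1)$ is a smooth point of $\mathcal{Y}_k$ of the same dimension shifted by $h^1-1$.

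Next I would use the local structure of $\mathcal{Y}_k$ inside $M_1\times M_2$. By construction, $\mathcal{Y}_k$ is cut out set--theoretically inside $B^k(\mathcal{U}_1,\mathcal{U}_2^*)$ by the open condition $h^0(E_1\otimes F_1^*)=k$, so $B^{k+1}(\mathcal{U}_1,\mathcal{U}_2^*)$ does not contain $(E_1,F_1)$ and $\mathcal{Y}_k$ and $B^k(\mathcal{U}_1,\mathcal{U}_2^*)$ coincide in a neighbourhood of the point. Hitching's criterion states that $B^k(\mathcal{U}_1,\mathcal{U}_2^*)$ is smooth of expected codimension $h^0\cdot h^1$ at $(E_1,F_1)$ precisely when the cup--product map obtained from deformations of the pair $(E_1,F_1)$ to deformations of $E_1\otimes F_1^*$, composed with the Petri multiplication, is surjective; this is exactly the map $\eta_E=\beta\circ d\Phi$ introduced in the excerpt. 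Consequently, smoothness of $\mathcal{Y}_k$ at $(E_1,F_1)$ of expected dimension $\rho(g,n_1,d_1,n_2,d_2,k)$ is equivalent to surjectivity of $\eta_E$.

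To finish I would verify that the two expected dimensions match. By definition,
\[
\beta(g,n_1,d_1,n_2,d_2,k)=\rho(g,n_1,d_1,n_2,d_2,k)+h^1-1,
\]
so the equivalence established above matches exactly the expected dimension of $U_{{\mu_1}}(n,d,k)$ with that of $\mathcal{Y}_k$ shifted by the fibre dimension $h^1-1$. Combining the two equivalences gives the theorem.

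The main obstacle is the precise invocation of Hitching's criterion: one must check that the image of $d\Phi$ (composed with $\beta$) indeed computes the normal space to $B^k(\mathcal{U}_1,\mathcal{U}_2^*)$ in $M_1\times M_2$, which is the content of the sequence (\ref{tan1}) together with the identification of $N$ via the dual Petri map. Once this identification is in place, the statement follows by standard determinantal deformation theory, and no further work on the projective bundle side is needed because the fibration $U_{{\mu_1}}(n,d,k)\to\mathcal{Y}_k$ is smooth.
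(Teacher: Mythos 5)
Your proposal is correct and follows essentially the same route as the paper: the projective-bundle structure $U_{\mu_1}(n,d,k)=\mathbb{P}(\mathcal{R}_{\mu_1}(\mathcal{Y}_k))\to\mathcal{Y}_k$ from Theorem \ref{teo2} reduces everything to smoothness of $\mathcal{Y}_k$ at $(E_1,F_1)$, which is then settled by identifying the normal space with the image of $\eta=\beta\circ d\Phi$ and invoking \cite[Proposition 3.5]{hitching}. Your version in fact spells out the dimension bookkeeping $\beta=\rho+h^1-1$ and the openness of $\mathcal{Y}_k$ in $B^k(\mathcal{U}_1,\mathcal{U}_2^*)$ near the point more explicitly than the paper does.
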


%%%%%%%%%%%%%%%%%%%%%%%%%%%%%%%%%%%%%%%%%%%%%%%%%%%%%%%%%%%%%%%%%

\end{document}